\numberwithin{equation}{section}
\def\sgn{\text{sgn}}
\def\SL{\operatorname{SL}}
\def\GL{\operatorname{GL}}
\def\PSL{\operatorname{PSL}}
\def\Re{\operatorname{Re}}
\def\Im{\operatorname{Im}}
\def\N{\operatorname{N}}
\newtheorem{defi}{\bf Definition}
\newtheorem{thm}[defi]{\bf Theorem}
\newtheorem{lm}[defi]{\bf Lemma}
\newtheorem{cor}[defi]{\bf Corollary}
\newtheorem{prp}[defi]{\bf Proposition}
\newcommand{\arrowIn}{
\tikz \draw[-stealth] (-1pt,0) -- (1pt,0);
}
\numberwithin{equation}{section}
\begin{document}

\title{Geometry and distribution of roots of $\mu^2\equiv D\bmod m$ with $D \equiv 1 \bmod 4$}
\date{}
\author[$1$]{Zonglin Li\footnote{Funded by China Scholarship Council}}
\author[$2$]{Matthew Welsh\footnote{Supported by EPSRC grant EP/S024948/1}}
\affil[$1$]{School of Mathematics, University of Bristol, Bristol BS8 1UG, U.K.}
\affil[$2$]{Department of Mathematics, University of Maryland, MD 20742-5025, U.S.}

\date{28 April 2023}
\maketitle

\begin{abstract}
  We extend the geometric interpretation of the roots of the quadratic congruence $\mu^2 \equiv D \pmod m$ as the tops of certain geodesics in the hyperbolic plane to the case when $D$ is square-free and $D\equiv 1\pmod 4$.
  This allows us to establish limit laws for the fine-scale statistics of the roots using results by Marklof and the second author. 
\end{abstract}

\tableofcontents

\section{Introduction}

The recent paper \cite{misc:Marklof-Welsh} by Marklof and the second author established limit laws for the fine-scale distribution of the roots of the quadratic congruence $\mu^2\equiv D \pmod m$, ordered by the modulus $m$, where $D$ is a square-free positive integer and $D\not\equiv 1\pmod 4$.
This is achieved by relating the roots (when $D > 0$) to the tops of certain geodesics in the Poincar\'e upper half-plane $\mathbb{H} = \{ z \in \mathbb{C}: \Im z > 0\}$ and (when $D<0$) to orbits of points, under the action of the modular group. Previous studies of quadratic congruences by means of the spectral theory of automorphic forms include the works of Bykovskii \cite{jr:Bykovskii}, Duke, Friedlander and Iwaniec \cite{jr:DFI}, Hejhal \cite{jr:Hejhal} and many others (see \cite{misc:Marklof-Welsh} for further context and references).

Geodesics in $\mathbb{H}$ are either vertical lines or semi-circles centred on the real line, and in the latter case we say that a geodesic is positively oriented if it is traced from left to right.
Letting $z_{\bm{c}}$ denote the top of a positively oriented geodesic $\bm{c}$, i.e. the point on $\bm{c}$ with the largest imaginary part, the correspondence established in \cite{misc:Marklof-Welsh} is that for $D$ positive, square-free and $D \not\equiv 1 \pmod 4$, there is a finite set of geodesics $\{\bm{c}_1, \dots, \bm{c}_h\}$ such that 
\begin{equation}
  \label{eq:parametrization}
  z_{\gamma \bm{c}_l} \equiv \frac{\mu}{m} + \mathrm{i} \frac{\sqrt{D}}{m} \pmod {\Gamma_\infty}
\end{equation}
parametrizes all the roots $\mu \bmod m$ as $l$ varies over $1\leq l \leq h$ and $\gamma$ varies over the double cosets $\Gamma_\infty \backslash \SL(2, \mathbb{Z}) / \Gamma_{\bm{c}_l}$ such that $\gamma \bm{c}_l$ is positively oriented.
Here $\Gamma_{\bm{c}_l}$ is the stabilizer in $\SL(2, \mathbb{Z})$ of the geodesic $\bm{c}_l$ (acting on $\mathbb{H}$ by fractional linear transformations) and
\begin{equation}
  \label{eq:Gammainfinitydef}
  \Gamma_\infty = \left\{ \pm
    \begin{pmatrix}
      1 & k \\
      0 & 1
    \end{pmatrix}
    : k \in \mathbb{Z} \right\}
\end{equation}
is the stabilizer in $\SL(2, \mathbb{Z})$ of the boundary point $\infty \in \partial \mathbb{H}$.
Moreover, it is established that, for a fixed root $\nu \bmod n$, the subset of the roots $\mu \bmod m$ satisfying $m\equiv 0 \pmod n$ and $\mu \equiv \nu \pmod n$ is parametrized in the same way by \eqref{eq:parametrization} with $\SL(2, \mathbb{Z})$ replaced by the congruence subgroups
\begin{equation}
  \label{eq:Gamma0ndef}
  \Gamma_0(n) = \left\{
    \begin{pmatrix}
      a & b \\
      c & d
    \end{pmatrix}
    \in \SL(2, \mathbb{Z}) : c \equiv 0 \pmod n \right\}
\end{equation}
and a possibly different finite set of geodesics $\{\bm{c}_1, \dots, \bm{c}_h\}$.



In this paper, we extend this geometric realization of the roots to remove the restriction $D\not\equiv 1\pmod 4$, obtaining the following theorem.

\begin{thm}
    Let $D>0$ be a square-free integer satisfying $D\equiv 1 \pmod 4$. Given $n>0$ and $\nu \bmod n$ with $\nu^2\equiv D \pmod n$, there exists a finite set of geodesics $\{\bm{c}_1,\cdots,\bm{c}_{h}\}$, having the following properties:
    \begin{enumerate}[(i)]
        \item For any positive integer $m$ and $\mu\bmod m$ satisfying $\mu^2\equiv D \pmod m$, $m\equiv 0\pmod n$, and $\mu\equiv\nu\pmod n$, there exists a unique $k$ and a double coset $\Gamma_\infty\gamma\Gamma_{\bm{c}_k}\in\Gamma_\infty\backslash\Gamma_0(n)\slash\Gamma_{\bm{c}_k}$ with $\gamma \bm{c}_k$ positively oriented such that 
          \begin{equation}
            \label{eq:parametrization1}
            z_{\gamma\bm{c}_k}\equiv\frac{\mu}{m}+\mathrm{i}\frac{\sqrt{D}}{m} \pmod {\Gamma_\infty}.
        \end{equation}
        \item Conversely, given $k$ and a double coset $\Gamma_\infty\gamma\Gamma_{\bm{c}_k}\in\Gamma_\infty\backslash\Gamma_0(n)\slash\Gamma_{\bm{c}_k}$ with positively oriented $\gamma\bm{c}_k$, there exists a unique positive integer $m$ and $\mu\bmod m$ satisfying $\mu^2\equiv D \pmod m$, $m\equiv 0\pmod n$, and $\mu\equiv\nu\pmod n$ such that \eqref{eq:parametrization1} holds.
    \end{enumerate}
\label{thm: congruence subgroups}\end{thm}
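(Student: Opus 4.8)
The plan is to convert the statement into a counting problem for integral binary quadratic forms of discriminant $4D$ and to read off the double cosets from the orbit structure. To a modulus $m$ and a root $\mu\bmod m$ I attach the form $Q_{\mu,m}=[m,\,-2\mu,\,(\mu^2-D)/m]$; the hypothesis $\mu^2\equiv D\pmod m$ makes its coefficients integral, its discriminant is $(-2\mu)^2-4m(\mu^2-D)/m=4D$, and a short computation shows that the geodesic whose endpoints are the two real roots of $Q_{\mu,m}(t,1)=0$ has top exactly $\mu/m+\mathrm{i}\sqrt D/m$. Conversely, every form $[a,b,c]$ of discriminant $4D$ has $b$ even, so $a>0$ together with $m=a$, $\mu=-b/2$ returns a root, and the substitution $\mu\mapsto\mu+m$ is exactly the action of $\Gamma_\infty$. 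First I would package this as a bijection between roots $\mu\bmod m$ and $\Gamma_\infty$-classes of discriminant-$4D$ forms with positive leading coefficient, under which positive orientation of the geodesic corresponds to $a>0$ and the constraints $n\mid m$, $\mu\equiv\nu\pmod n$ become $n\mid a$ and $-b/2\equiv\nu\pmod n$.

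Next I would verify that these last two congruence conditions are preserved by the right action of $\Gamma_0(n)$ on forms, while they are plainly not preserved by all of $\SL(2,\mathbb{Z})$: writing out $Q_{\mu,m}\circ\gamma$ for $\gamma\in\Gamma_0(n)$ and reducing modulo $n$, the lower-left entry $\equiv 0\pmod n$ forces the leading coefficient to stay divisible by $n$ and the root to stay $\equiv\nu$, which is precisely why $\Gamma_0(n)$ is the correct symmetry group. Since a fixed discriminant admits only finitely many $\SL(2,\mathbb{Z})$-classes, hence finitely many $\Gamma_0(n)$-classes, the forms obeying the two conditions split into finitely many $\Gamma_0(n)$-orbits; picking one representative $Q_k$ per orbit and letting $\bm{c}_k$ be its geodesic yields the set $\{\bm{c}_1,\dots,\bm{c}_h\}$.

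It remains to parametrize one orbit. Because two positively oriented geodesics have tops congruent modulo $\Gamma_\infty$ precisely when they differ by an element of $\Gamma_\infty$, and because the stabilizer $\Gamma_{\bm{c}_k}$ acts along the geodesic and so preserves orientation, the tops $z_{\gamma\bm{c}_k}$ for $\gamma\in\Gamma_0(n)$ with $\gamma\bm{c}_k$ positively oriented are in bijection with the double cosets $\Gamma_\infty\backslash\Gamma_0(n)/\Gamma_{\bm{c}_k}$ on which the orientation is positive. Feeding this into the bijection of the first two paragraphs gives both the existence and uniqueness of $(k,\Gamma_\infty\gamma\Gamma_{\bm{c}_k})$ in (i) and the converse construction in (ii).

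The hard part, and what genuinely distinguishes $D\equiv 1\pmod 4$ from the previously treated case, is that $4D$ is \emph{not} a fundamental discriminant here. When $D\equiv 2,3\pmod 4$ every form of discriminant $4D$ is automatically primitive, but when $D\equiv 1\pmod 4$ the forms $Q_{\mu,m}$ split into primitive forms, attached to the non-maximal order $\mathbb{Z}[\sqrt D]$ of conductor $2$, and content-$2$ forms $2[a',b',c']$ with $[a',b',c']$ primitive of discriminant $D$, attached to the maximal order $\mathbb{Z}[(1+\sqrt D)/2]$. Both types occur, and their geodesic stabilizers are governed by different Pell equations ($t^2-4Du^2=4$ versus $t^2-Du^2=4$). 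I expect the main work to lie in organizing the orbit representatives and computing the stabilizers $\Gamma_{\bm{c}_k}$ uniformly across both the maximal and the conductor-$2$ order, together with the attendant $2$-adic bookkeeping that is invisible when $4D$ is fundamental.
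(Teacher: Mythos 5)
Your proposal is correct, and it proves Theorem \ref{thm: congruence subgroups} by a genuinely different route from the paper. You work directly with integral binary quadratic forms of discriminant $4D$ (allowing the imprimitive, content-$2$ forms, which is legitimate here since the content squared divides $4D$), and you define the geodesics $\bm{c}_k$ as representatives of the $\Gamma_0(n)$-orbits of the constrained forms; finiteness is then soft, coming from the finiteness of the number of $\SL(2,\mathbb{Z})$-classes of forms of discriminant $4D$ together with $[\SL(2,\mathbb{Z}):\Gamma_0(n)]<\infty$. The only computation your route really needs is the stability of the constraint set under the right $\Gamma_0(n)$-action; this does hold, though the verification must be run modulo $2n$ on the middle coefficient (one needs $b'\equiv b \pmod{2n}$, which follows from $n\mid a$, $n$ dividing the lower-left entry, $b$ even, and $ps=1+qr$), not merely modulo $n$ as you say. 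The paper instead ties roots to ideals in the two orders $\mathbb{Z}[\sqrt D]$ and $\mathbb{Z}[\tfrac{1+\sqrt D}{2}]$ and to their narrow class groups, and its hard steps are exactly the ones your construction bypasses: producing in each narrow class an ideal whose root data satisfies the congruences (Lemma \ref{lm: I_k,nu,n}), showing that $\SL(2,\mathbb{Z})$-equivalent roots obeying the congruences are already $\Gamma_0(n)$- or $\Gamma_0(\tfrac n2)$-equivalent (Lemmas \ref{lm: congruence subgroup I} and \ref{lm: congruence subgroup J}), comparing the totally positive fundamental units (Proposition \ref{prp: epsilon1 epsilon2}, Lemmas \ref{lm: gamma_0 I} and \ref{lm:Gamma0n}), and analysing how $\Gamma_0(\tfrac n2)$-orbits split into $\Gamma_0(n)$-orbits. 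What that longer route buys is precisely what your soft argument cannot see: the explicit count $h=h_1+sh_2$ with $s\in\{1,2,3\}$ determined by $D\bmod 8$, $n\bmod 4$ and whether $\varepsilon_1=\varepsilon_2$ or $\varepsilon_1=\varepsilon_2^3$; the identification of the $\bm{c}_k$ as closed geodesics of known lengths, which feeds into the explicit pair correlation density via $\kappa_\Gamma$ in section \ref{sec:density}; and the clean separation into the two orders (Theorems \ref{thm: congruence subgroups I} and \ref{thm: congruence subgroups J}) underlying the partial densities $\omega_{D,\mathcal{O}_1}$, $\omega_{D,\mathcal{O}_2}$. Conversely, your route is shorter for the bare statement, and in fact your final paragraph overestimates the remaining work: no explicit stabilizer or unit computation is needed, since one may simply take $\Gamma_{\bm{c}_k}$ to be the stabilizer of the oriented geodesic inside $\Gamma_0(n)$ (equivalently, the $\Gamma_0(n)$-automorphs of $Q_k$), and the top-to-double-coset argument you give only ever produces elements of that group; the Pell-equation dichotomy $t^2-4Du^2=4$ versus $t^2-Du^2=4$ only becomes relevant when one wants the refined, explicit information that the paper extracts.
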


The main difficulty in the proof of Theorem \ref{thm: congruence subgroups} is that when $D\equiv 1\pmod 4$, the quadratic order $\mathbb{Z}[\sqrt D]$ is not maximal; the ring of integers of $\mathbb{Q}(\sqrt{D})$ with $D\equiv 1\pmod 4$ is the larger order $\mathbb{Z}[\frac{1+\sqrt{D}}{2}]$.
This complicates the situation because not all ideals in $\mathbb{Z}[\sqrt{D}]$ are invertible, and so do not correspond naturally with geodesics in $\mathbb{H}$. 
We resolve this by simultaneously considering ideals (and their corresponding geodesics) from the two orders $\mathbb{Z}[\sqrt D]$ and $\mathbb{Z}[\frac{1 + \sqrt{D}}{2}]$.
In fact, ideals in the two orders correspond to two complementary subsets of roots, see Theorems \ref{thm: congruence subgroups I} and \ref{thm: congruence subgroups J} below.




\begin{figure}
\centering
\subfigure{\resizebox{0.49\textwidth}{!}{\includegraphics{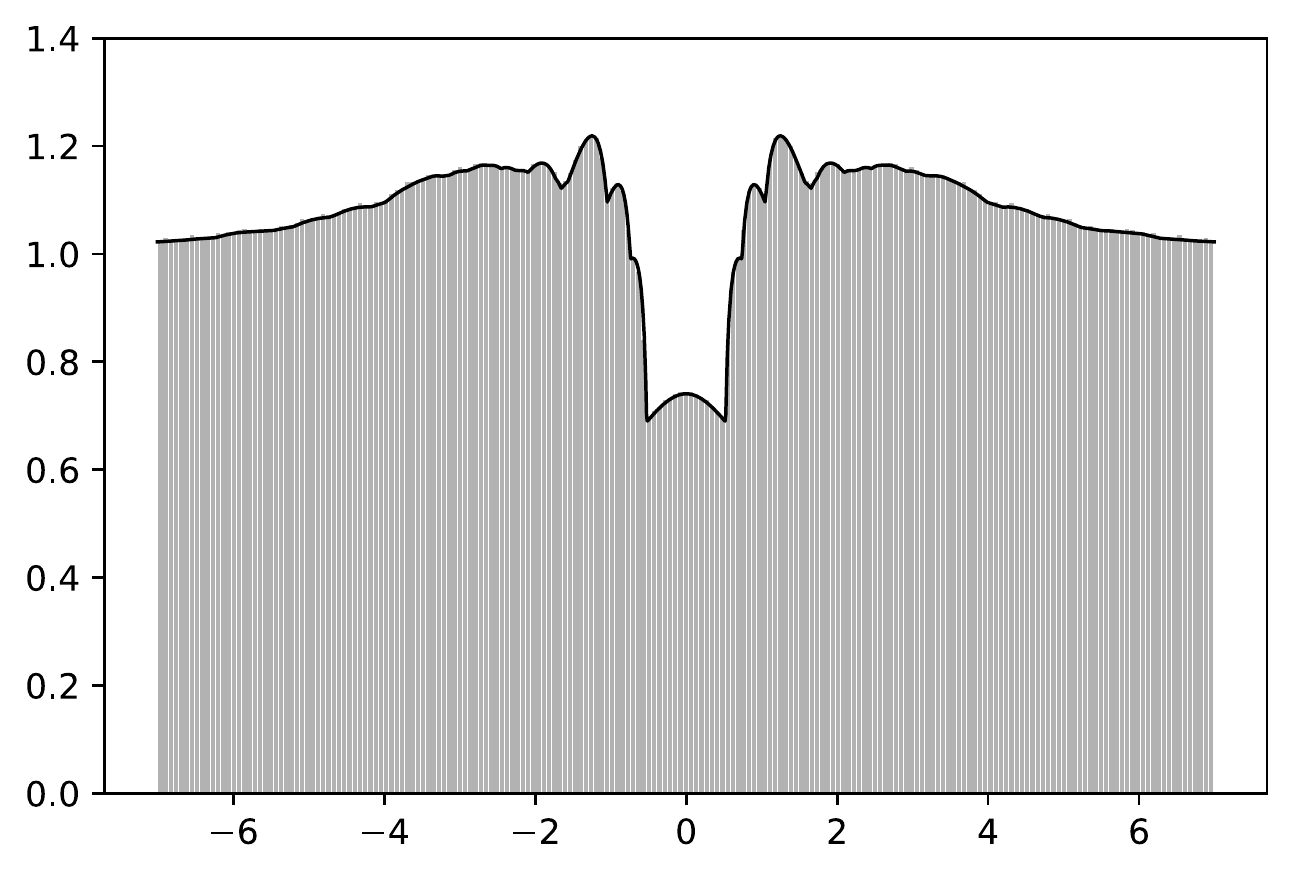}}}
\subfigure{\resizebox{0.49\textwidth}{!}{\includegraphics{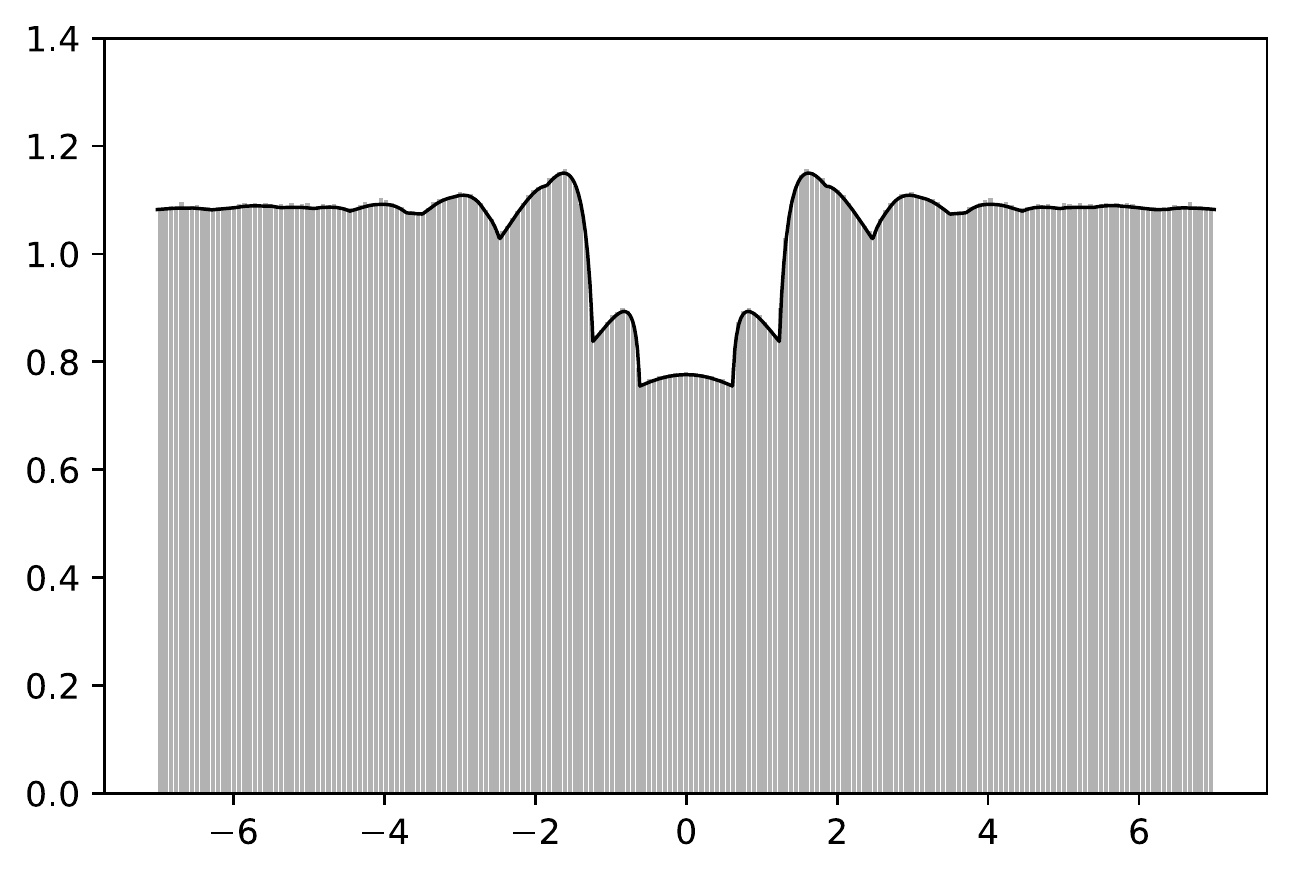}}}
\caption{Theoretical pair correlation total densities $\omega_5$ (left, black curve) and $\omega_{17}$ (right, black curve) compared to their experiments (gray histograms) with $N=10^6$.}
\label{fig:TPCD5}\end{figure}

\bigskip

Having Theorem \ref{thm: congruence subgroups}, we may apply the results of \cite{misc:Marklof-Welsh} to study the fine-scale distribution of these roots.
Specifically, we let $\{x_j\}_{j=1}^\infty$ denote the sequence of normalized roots $\frac{\mu}{m} \in \mathbb{T} = \mathbb{R} / \mathbb{Z}$ satisfying $m \equiv 0 \pmod n$ and $\mu \equiv \nu \pmod n$ for a fixed root $\nu^2 \equiv D \pmod n$.
Here we order by the modulus, with the ordering of the roots with the same modulus being inconsequential.
For example, with $D = 5$, $n =1$ and $\nu =0$, our sequence would begin
\begin{equation}
  \label{eq:seqex}
  \left\{ \frac{0}{1}, \frac{1}{2}, \frac{1}{4}, \frac{3}{4}, \frac{0}{5}, \frac{5}{10}, \frac{4}{11}, \frac{7}{11}, \dots \right\}. 
\end{equation}

We remark that it has been proved by Hooley \cite{jr:Hooley1} that these sequences (without the congruence restrictions, i.e. for $n =1$, $\nu =0$) are uniformly distributed mod $1$ if $D$ is not a perfect square.
We also remark that the uniform distribution of the subsequence with $m \equiv 0 \pmod n$ was proved by Iwaniec \cite{jr:Iwaniec}, where it was a crucial input to proving that the polynomial $x^2 + 1$, for example, is infinitely often a product of at most two primes. 

Having understood the uniform distribution of the sequence $\frac{\mu}{m}$, it is natural to investigate its pseudo-randomness properties, in particular the fine-scale statistics.
An attractive example of such a statistic is the pair correlation function $R_{N,\nu,n}^2(I)$, which for a finite interval $I\subset \mathbb{R}$ is defined by
\begin{equation}
  R_{N,\nu,n}^2(I)=\frac{1}{N}\#\!\left\{1\leq i\neq j\leq N: x_i - x_j \in N^{-1} I + \mathbb{Z} \right\}.
\end{equation}  
Theorem \ref{thm: congruence subgroups} together with theorem $4.10$ in \cite{misc:Marklof-Welsh} shows the existence of a limiting density function $\omega_{D,\nu,n}$ for the pair correlation function.

\begin{thm}
    Let $D>0$ be a square-free integer with $D\equiv 1 \pmod 4$ and $\nu \bmod n$ satisfy $\nu^2\equiv D\pmod n$. Then there exists an even and continuous function $\omega_{D,\nu,n}: \mathbb{R}\to \mathbb{R}_{\geq 0}$ such that for any bounded interval $I\subseteq\mathbb{R}$,
    \begin{equation}
        \lim_{N\to\infty} R_{N,\nu,n}^2(I)=\int_I\omega_{D,\nu,n}(t)\,dt.
    \end{equation}   
\label{thm: pair correlation density}\end{thm}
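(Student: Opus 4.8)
The plan is to deduce the statement directly from the geometric realization in Theorem~\ref{thm: congruence subgroups} together with the abstract pair correlation result, Theorem~4.10 of \cite{misc:Marklof-Welsh}. The role of Theorem~\ref{thm: congruence subgroups} is precisely to place the sequence $\{x_j\}$ of normalized roots into the form to which the machinery of \cite{misc:Marklof-Welsh} applies: each normalized root $\mu/m$ is the real part $\Re z_{\gamma \bm{c}_k}$ of the top of a positively oriented geodesic $\gamma \bm{c}_k$, where $\gamma$ ranges over the double cosets $\Gamma_\infty \backslash \Gamma_0(n) / \Gamma_{\bm{c}_k}$ and $k$ over the finite index set $\{1,\dots,h\}$. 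Crucially, the height of the top encodes the modulus via $\Im z_{\gamma \bm{c}_k} = \sqrt{D}/m$, so ordering the roots by increasing $m$ is the same as ordering the tops by decreasing imaginary part, which is exactly the ordering under which the abstract theorem produces a limiting pair correlation density.

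First I would record the dictionary between the data of Theorem~\ref{thm: congruence subgroups} and the hypotheses of Theorem~4.10: the finite-index subgroup is $\Gamma_0(n) \leq \SL(2,\mathbb{Z})$; the finitely many geodesics are $\bm{c}_1, \dots, \bm{c}_h$, each with infinite cyclic hyperbolic stabilizer $\Gamma_{\bm{c}_k}$ arising from the units of the relevant quadratic order; and the height cutoff $\Im z \geq \sqrt{D}/M$ corresponds to collecting exactly the roots of modulus $m \leq M$. I would then verify that the normalization is consistent, i.e. that the number $N$ of roots collected up to a given height grows linearly in the reciprocal height, equivalently that the count of admissible pairs $(\mu,m)$ with $m \leq M$ is asymptotically proportional to $M$; this is what makes the scaling $N^{-1}I$ in the definition of $R^2_{N,\nu,n}$ the correct one. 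Granting these checks, Theorem~4.10 of \cite{misc:Marklof-Welsh} yields a continuous, non-negative function $\omega_{D,\nu,n}$ with $\lim_{N\to\infty} R^2_{N,\nu,n}(I) = \int_I \omega_{D,\nu,n}(t)\,dt$ for every bounded interval $I$.

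It then remains only to note evenness, which does not require the deep input. Swapping the roles of $i$ and $j$ in the definition of $R^2_{N,\nu,n}(I)$ replaces $I$ by $-I$, so $R^2_{N,\nu,n}(I) = R^2_{N,\nu,n}(-I)$ for every $N$, whence $\int_I \omega_{D,\nu,n} = \int_{-I} \omega_{D,\nu,n}$ for all bounded $I$; since $\omega_{D,\nu,n}$ is continuous this forces $\omega_{D,\nu,n}(t) = \omega_{D,\nu,n}(-t)$ everywhere. Non-negativity follows in the same way from $R^2_{N,\nu,n}(I) \geq 0$.

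The step I expect to be the true obstacle is confirming that the realization of Theorem~\ref{thm: congruence subgroups} meets the hypotheses of Theorem~4.10 \emph{despite} the latter having been established under the assumption $D \not\equiv 1 \pmod 4$. The subtlety is that, as indicated in the discussion of Theorems~\ref{thm: congruence subgroups I} and~\ref{thm: congruence subgroups J}, the roots here are realized using geodesics arising from the two orders $\mathbb{Z}[\sqrt{D}]$ and $\mathbb{Z}[\tfrac{1+\sqrt D}{2}]$. One must therefore check that the single finite set $\{\bm{c}_1,\dots,\bm{c}_h\}$ furnished by Theorem~\ref{thm: congruence subgroups}, together with its stabilizers and orientations, satisfies the structural conditions driving Theorem~4.10 --- hyperbolicity and finiteness of the geodesic data, and the underlying equidistribution input --- so that all self- and cross-correlations among the $h$ geodesic families are simultaneously captured by the same limiting density. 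Once this compatibility is verified, the conclusion is immediate.
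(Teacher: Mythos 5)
Your proposal is correct and follows essentially the same route as the paper: the paper proves Theorem \ref{thm: pair correlation density} precisely by combining Theorem \ref{thm: congruence subgroups} with theorem 4.10 of \cite{misc:Marklof-Welsh}, whose hypotheses are purely geometric (finitely many closed geodesics for a lattice such as $\Gamma_0(n)$, with the modulus encoded in the height of the top), so the restriction $D \not\equiv 1 \pmod 4$ in \cite{misc:Marklof-Welsh} enters only the arithmetic realization and not the limit theorem itself. Your symmetry argument for evenness ($R^2_{N,\nu,n}(I) = R^2_{N,\nu,n}(-I)$) is also the one the paper acknowledges as immediate from the definition, before making it manifest in the explicit formula of section \ref{sec:density}.
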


\begin{figure}
\centering
\subfigure{\resizebox{0.49\textwidth}{!}{\includegraphics{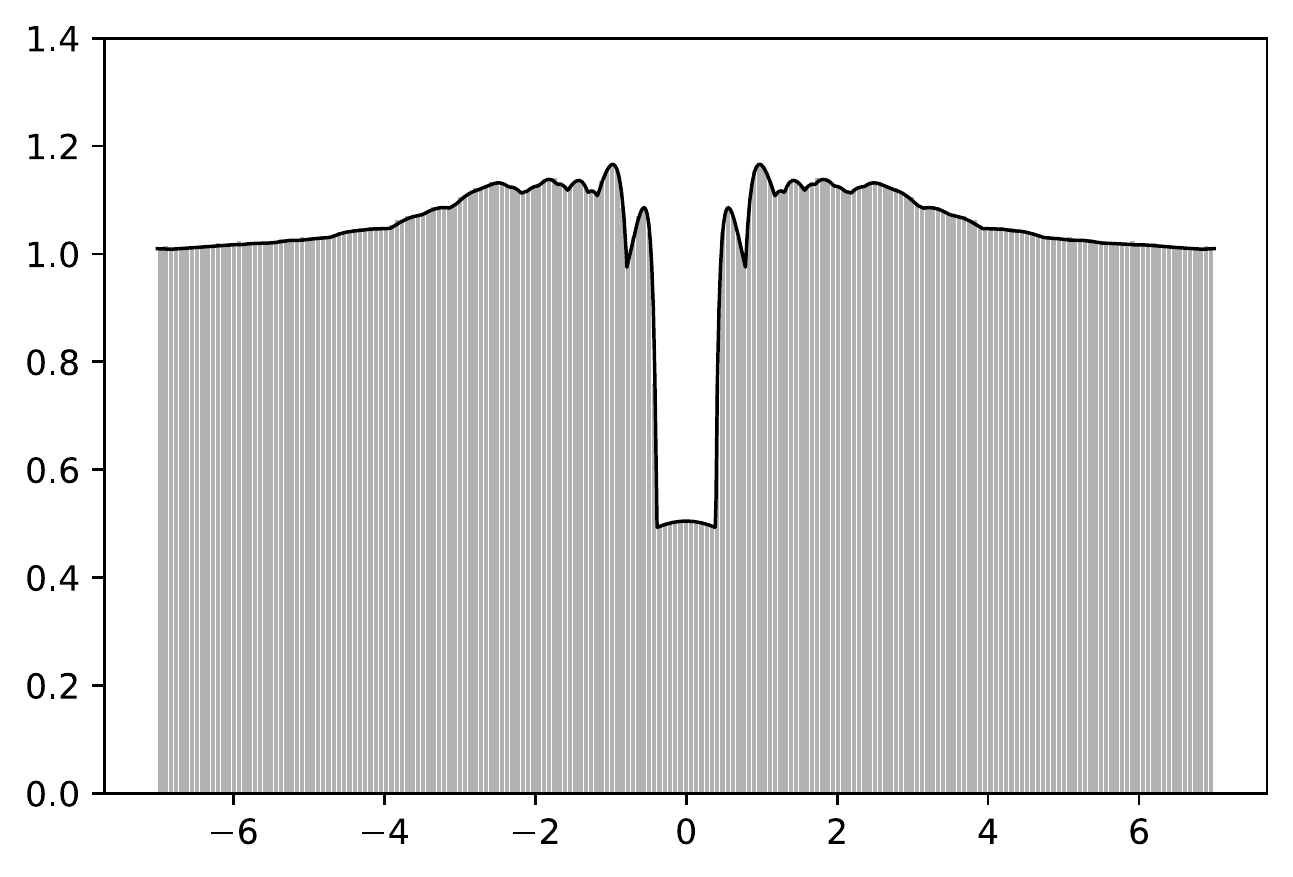}}}
\subfigure{\resizebox{0.49\textwidth}{!}{\includegraphics{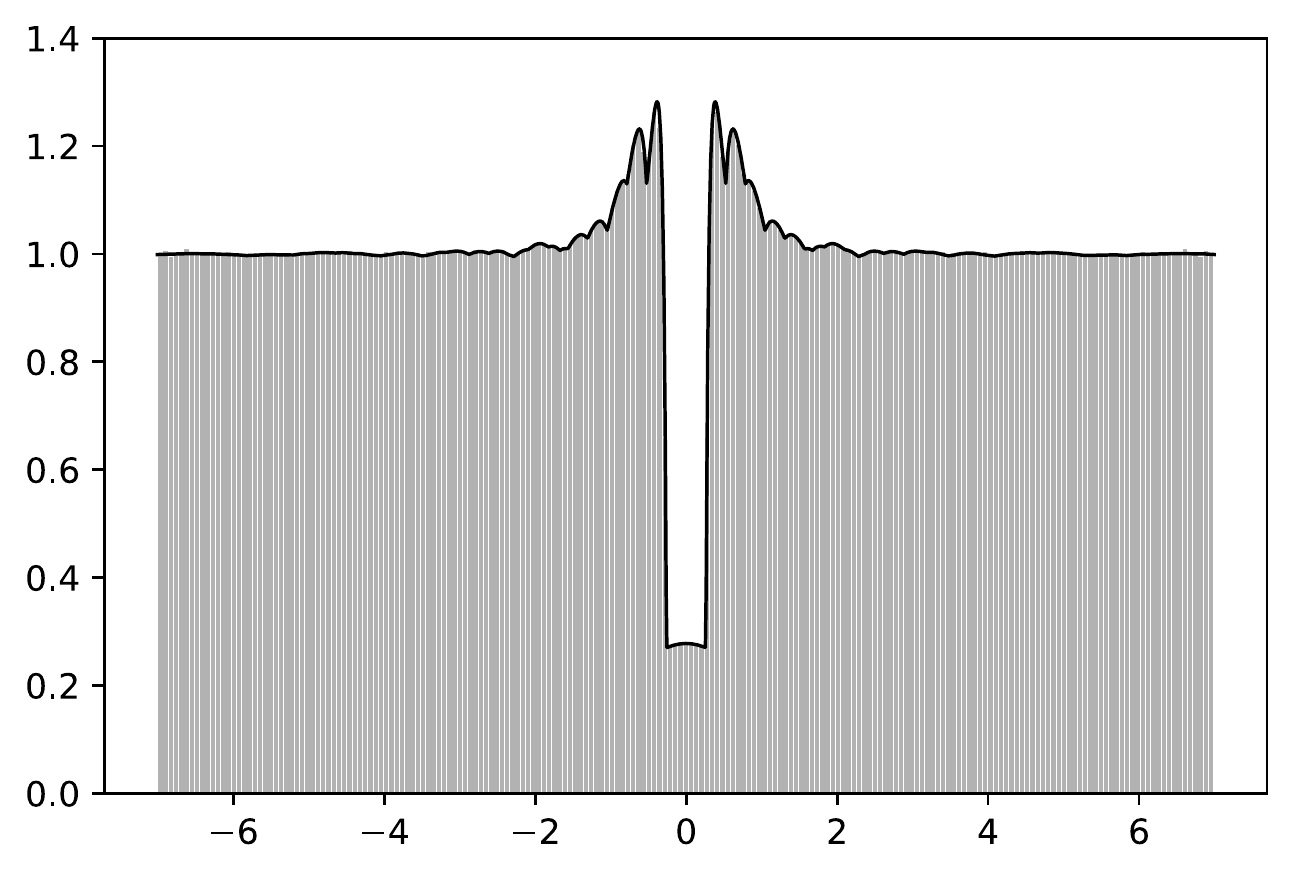}}}
\caption{Theoretical pair correlation partial densities $\omega_{5,\mathcal{O}_1}$ (left, black curve) for $m\not\equiv 2\pmod 4$ and $\omega_{5,\mathcal{O}_2}$ (right, black curve) for $m\equiv 2\pmod 4$ compared to their experiments (gray histograms) with $N=10^6$.}
\label{fig:TPCD5 Gamma}\end{figure}

As in \cite{misc:Marklof-Welsh}, the methods in fact give an explicit expression for the density functions $\omega_{D,\nu,n}$.
Moreover, we provide a somewhat simplified expression than that in \cite{misc:Marklof-Welsh}, see section \ref{sec:density}. 
The figures \ref{fig:TPCD5}, \ref{fig:TPCD5 Gamma} and \ref{fig:TPCD17 Gamma} show plots of the density functions $\omega_{D,\nu,n}$ together with numerical experiments.
For comparison, we remark that the pair correlation density for a Poisson point process would be the constant function $\omega(t) = 1$, while the normalized eigenvalues of a large random matrix and (conjecturally) the zeros of the Riemann zeta function have pair correlation density $\omega(t) = 1 - (\frac{\sin\pi t}{ \pi t})^2$. 

In Figures \ref{fig:TPCD5}, \ref{fig:TPCD5 Gamma} and \ref{fig:TPCD17 Gamma}, the pair correlation total density $\omega_D=\omega_{D,0,1}$ considers the sequence of all roots $\frac{\mu}{m}$ of $\mu^2\equiv D \pmod m$, while the pair correlation partial density $\omega_{D,\mathcal{O}_1}$ is for the subsequence of the roots $\frac{\mu}{m}$ corresponding to ideals in the ring $\mathcal{O}_1 = \mathbb{Z}[\sqrt{D}]$, while the partial density $\omega_{D,\mathcal{O}_2}$ is for the subsequence of the roots $\frac{\mu}{m}$ corresponding to ideals in the ring $\mathcal{O}_2 = \mathbb{Z}[\tfrac{1 + \sqrt D}{2}]$.
An explicit condition on the roots determining which subset they belong to is given in Lemma \ref{lm:general invertible ideal}, and in the case that $D \equiv 5 \pmod 8$, can be reduced to a congruence condition on $m$, see Lemma \ref{lm: D 5 mod 8}.

\bigskip

We can in fact prove limit distributions for a wide variety of fine-scale statistics in addition to the pair correlation.
We let $\lambda$ be a Borel probability measure on $\mathbb{T}$ and $X$ be a random variable distributed according to $\lambda$.
We then define the following random counting measure, or random point process,
\begin{equation}
    \Xi_{N,\lambda,\nu,n}=\sum_{k\in\mathbb{Z}}\sum_{j=1}^N\delta_{N(x_j-X+k)},
\end{equation}
where $\delta_y$ is a Dirac measure at $y\in\mathbb{R}$.
Given a non-negative integer $k$ and $I\subseteq\mathbb{R}$, we further denote the counting functions
\begin{equation}
    \mathcal{N}_{I,\nu,n}(x,N)=\#\!\left\{1\leq j\leq N: x_j -x \in N^{-1} I+\mathbb{Z}\right\},
\end{equation}
so that
\begin{equation}
    \mathbb{P}(\Xi_{N,\lambda,\nu,n}(I)=k)=\lambda(\{x\in\mathbb{T}:\mathcal{N}_{I,\nu,n}(x,N)=k\}).
\end{equation}
The next theorem can be proved by applying theorem $2.1$ in \cite{misc:Marklof-Welsh}.

\begin{figure}
 \centering
 \subfigure{\resizebox{0.49\textwidth}{!}{\includegraphics{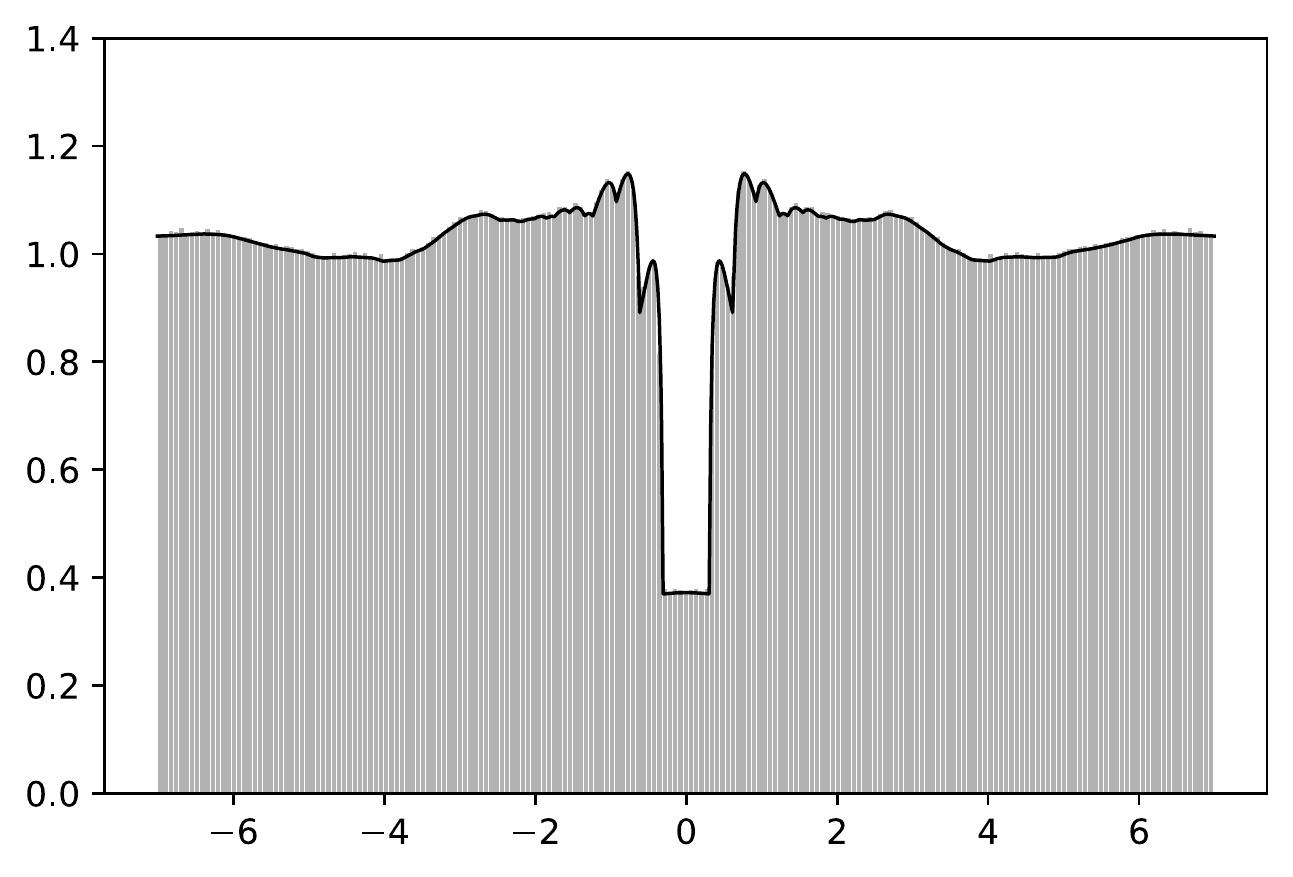}}}
 \subfigure{\resizebox{0.49\textwidth}{!}{\includegraphics{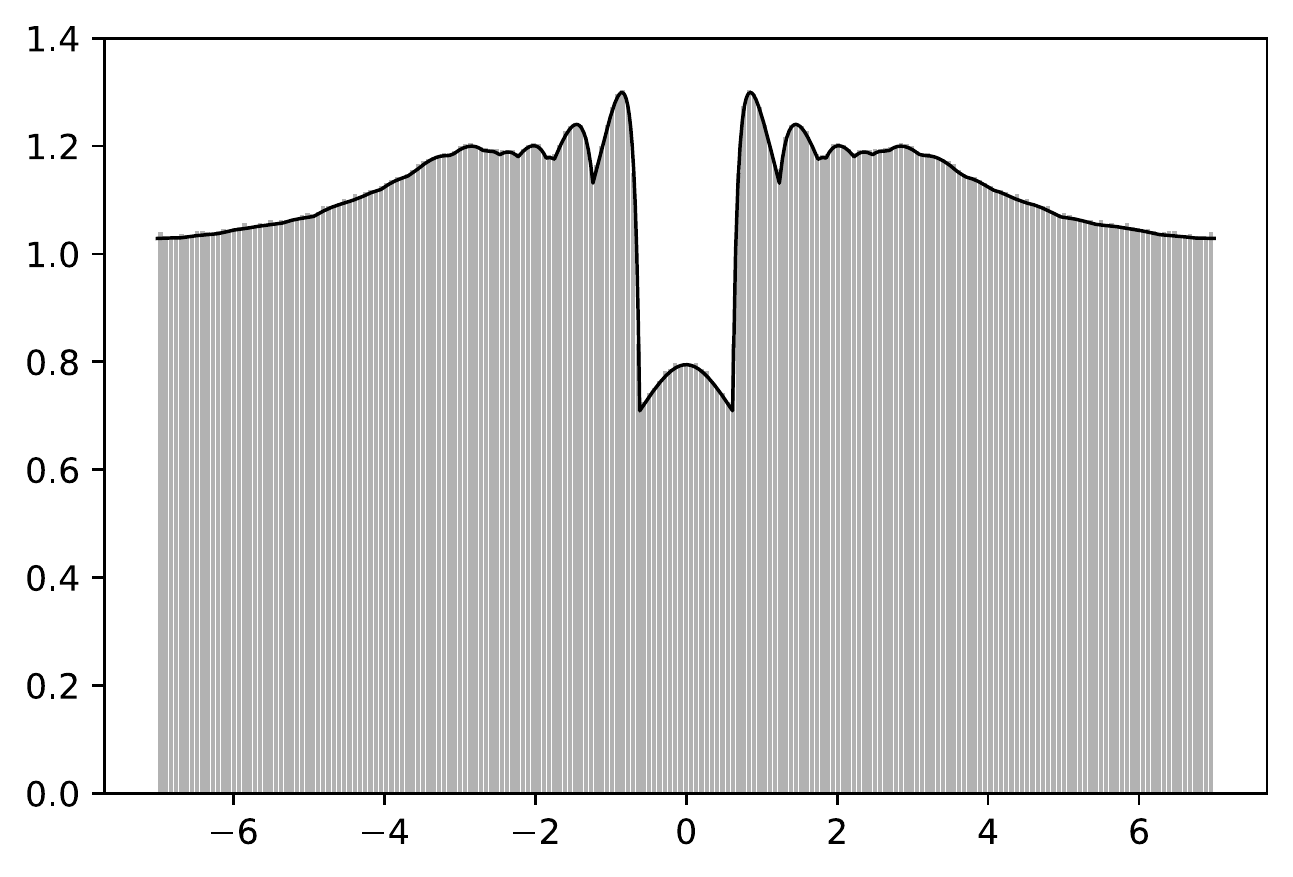}}}
 \caption{Theoretical pair correlation partial densities $\omega_{17,\mathcal{O}_1}$ (left, black curve) with either $m$ or $\frac{D-\mu^2}{m}$ odd, and $\omega_{17,\mathcal{O}_2}$ (right, black curve) with both $m$ and $\frac{D-\mu^2}{m}$ even compared to their experiments (gray histograms) with $N=10^6$.}
\label{fig:TPCD17 Gamma}\end{figure}

\begin{thm}
    Let $D>0$ be a square-free integer with $D\equiv 1 \pmod 4$ and $\nu \bmod n$ satisfy $\nu^2\equiv D\pmod n$. Then there exists a random point process $\Xi_{\nu,n}$ (depending only on $\nu, n$ and $D$) such that for any Borel probability measure $\lambda$ absolutely continuous with respect to Lebesgue measure,
    \begin{equation}
        \lim_{N\to\infty} \Xi_{N,\lambda,\nu,n} = \Xi_{\nu,n}.
    \end{equation}
    In particular, given $k_j\in\mathbb{N}$ and bounded interval $I_j\subseteq\mathbb{R}$, then
    \begin{equation}
        \lim_{N\to\infty} \lambda(\{x\in\mathbb{T}: N_{I_j,\nu,n}(x,N)=k_j, \forall\, 1\leq j \leq r\})=\mathbb{P}(\Xi_{\nu,n}(I_j)=k_j, \forall\, 1\leq j \leq r),
    \end{equation}
    which is a continuous function of the endpoints of $I_j$.
\label{thm: random point process}\end{thm}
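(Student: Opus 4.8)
The plan is to deduce Theorem \ref{thm: random point process} from Theorem \ref{thm: congruence subgroups} together with the abstract point-process limit law of theorem 2.1 in \cite{misc:Marklof-Welsh}. The first step is to use Theorem \ref{thm: congruence subgroups} to realize the normalized roots as real parts of geodesic tops: writing each admissible pair $(\mu,m)$ as a double coset $\Gamma_\infty \gamma \Gamma_{\bm{c}_k}$ with $\gamma \bm{c}_k$ positively oriented, we have $\Re z_{\gamma \bm{c}_k} = \mu/m$ and $\Im z_{\gamma \bm{c}_k} = \sqrt D / m$. In particular, ordering the roots by the modulus $m$ is the same as ordering the tops $z_{\gamma\bm{c}_k}$ by decreasing imaginary part, so the truncation to the first $N$ roots corresponds to a cutoff on the height of the geodesic tops. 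This puts the sequence $\{x_j\}$ exactly in the geometric form to which theorem 2.1 of \cite{misc:Marklof-Welsh} applies, now with the group $\Gamma_0(n)$ and the finite geodesic family $\{\bm{c}_1,\dots,\bm{c}_h\}$ furnished by Theorem \ref{thm: congruence subgroups}.

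Next I would translate the random counting measure $\Xi_{N,\lambda,\nu,n}$ into this geometric language. The counting function $\mathcal{N}_{I,\nu,n}(x,N)$ records the geodesic tops whose real part lies within $N^{-1}I$ of the random center $X$ and whose height exceeds the cutoff determined by $N$; to make the two normalizations agree one uses the known asymptotic density of roots (the number of admissible roots of modulus at most $M$ grows linearly in $M$), so that rescaling differences by $N$ matches the fine-scale regime in \cite{misc:Marklof-Welsh}. The role of the hypothesis that $\lambda$ is absolutely continuous with respect to Lebesgue measure is precisely to allow the passage from the equidistribution of the underlying orbit, which holds in an averaged sense over the center, to convergence of the averaged point process; theorem 2.1 of \cite{misc:Marklof-Welsh} then yields a limiting random point process $\Xi_{\nu,n}$, depending only on $D$, $\nu$ and $n$ through $\Gamma_0(n)$ and the geodesic family, together with convergence of the occupancy probabilities $\lambda(\{\mathcal{N}_{I_j,\nu,n}(x,N)=k_j\}) \to \mathbb{P}(\Xi_{\nu,n}(I_j)=k_j)$.

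The main obstacle is verifying that the geometric data produced by Theorem \ref{thm: congruence subgroups} meets the non-degeneracy hypotheses of theorem 2.1 in \cite{misc:Marklof-Welsh} and that the normalizations are correctly matched. Concretely, one must check that the geodesics $\bm{c}_k$ have irrational endpoints — which holds because these endpoints are built from $\sqrt D$ with $D$ square-free — so that the relevant $\Gamma_0(n)$-orbit equidistributes rather than collapsing onto a periodic orbit, and one must identify the density constant relating $N$ to the height cutoff. A further point requiring care is that, in the proof of Theorem \ref{thm: congruence subgroups}, the roots split according to the two orders $\mathbb{Z}[\sqrt D]$ and $\mathbb{Z}[\tfrac{1+\sqrt D}{2}]$; however, since Theorem \ref{thm: congruence subgroups} repackages both families into a single finite geodesic set, the limiting process is automatically the superposition of the contributions of the individual geodesics, which theorem 2.1 handles directly. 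Finally, the continuity of $\mathbb{P}(\Xi_{\nu,n}(I_j)=k_j)$ in the endpoints of the $I_j$ follows from the absence of atoms in the limiting process, which is part of the conclusion of theorem 2.1 in \cite{misc:Marklof-Welsh}.
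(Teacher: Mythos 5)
Your proposal is correct and follows essentially the same route as the paper: the paper proves Theorem \ref{thm: random point process} exactly by combining the geometric parametrization of Theorem \ref{thm: congruence subgroups} (tops of geodesics for $\Gamma_0(n)$ acting on the finite family $\{\bm{c}_1,\dots,\bm{c}_h\}$) with theorem 2.1 of \cite{misc:Marklof-Welsh}. The additional verifications you flag (matching normalizations, non-degeneracy, continuity of the limit probabilities) are precisely the points handled by the cited theorem, so nothing further is needed.
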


The problem becomes simpler when considering negative $D$.
Our results still hold by considering the fine-scale distribution of real parts of hyperbolic lattice points instead of tops of geodesics, see \cite{jr:Marklof-Vinogradov, jr:Risager-Rudnick} and section \ref{sec:Dnegative} below.
Moreover, these are closely related to the study of the distribution of angles in hyperbolic lattices, see \cite{jr:Boca-Pasol-Popa, jr:Boca-Popa, jr:Kelmer-Kontorovich, jr:Risager-Sodergren}.

\bigskip

We start in section \ref{sec:geodesics} by establishing the connection between the roots $\frac{\mu}{m}$ and the tops of geodesics in the Poincar\'e upper half-plane $\mathbb{H}$.
The main result of this section is a proof of Theorem \ref{thm: congruence subgroups} in the special case $n =1$ and $\nu = 0$.
This restriction is removed, and so Theorem \ref{thm: congruence subgroups} is proved, in section \ref{sec: Congruence subgroups}, where we investigate the orbits of geodesics under the congruence subgroups \eqref{eq:Gamma0ndef}.
In section \ref{sec:density} we record some observations about the explicit expression for the pair correlation obtained in \cite{misc:Marklof-Welsh}, and we close the paper in section \ref{sec:Dnegative} with some remarks on analogous results in the case that $D$ is negative.

\bigskip
\noindent
{\bf Acknowledgement}. The authors would like to thank Jens Marklof for his valuable suggestions and encouragement.

\section{Geodesics and roots of quadratic congruences}
\label{sec:geodesics}

A convenient model for the hyperbolic plane is the Poincar\'e upper half-plane
\begin{equation}
    \mathbb{H}=\{x+\mathrm{i} y\in\mathbb{C}:y>0\}
\end{equation}
equipped with the  metric
\begin{equation}
  \label{eq:metricdef}
  \dd s^2 = \frac{1}{y^2} ( \dd x^2 + \dd y^2). 
\end{equation}
The special linear group $G = \SL(2,\mathbb{R})$, consisting of $2\times2$ matrices with real entries and determinant one, acts on $\mathbb{H}$
by M\"obius transformations
\begin{equation}
  g=\begin{pmatrix}
      a & b \\
      c & d
  \end{pmatrix}
  :z\mapsto\frac{az+b}{cz+d}.
\end{equation}
We can in fact identify the projective special linear group $\PSL(2, \mathbb{R}) = G / \{\pm I\}$ with the unit tangent bundle $\mathrm{T}_1(\mathbb{H})$ of the hyperbolic plane via
\begin{equation}
  \label{eq:GT1H}
  \pm g = \pm
  \begin{pmatrix}
    a & b \\
    c & d
  \end{pmatrix}
  \mapsto
  \left( \frac{a \mathrm{i} + b}{c \mathrm{i} + d}, \frac{\mathrm{i}}{(c \mathrm{i} + d)^2} \right),
\end{equation}
where we have expressed $\mathrm{T}_1(\mathbb{H})$ as the set of pairs of complex numbers $(z, w)$ with $z \in \mathbb{H}$ and $|w| = \Im z$ (so $w$ has length $1$ with respect to the metric \eqref{eq:metricdef}).
We will however work with $G$ directly and only emphasize the difference between $G$ and $\mathrm{T}_1(\mathbb{H})$ when there is a chance of confusion. 
We note that the  correspondence \eqref{eq:GT1H} is conveniently expressed in terms of the Iwasawa coordinates:
\begin{equation}
  \label{eq:iwasawa}
  \begin{pmatrix}
    1 & x \\
    0 & 1
  \end{pmatrix}
  \begin{pmatrix}
    y^{\frac{1}{2}} & 0 \\
    0 & y^{-\frac{1}{2}}
  \end{pmatrix}
  \begin{pmatrix}
    \cos \frac{\theta}{2} & - \sin \frac{\theta}{2} \\
    \sin \frac{\theta}{2} & \cos \frac{\theta}{2}
  \end{pmatrix}
  \mapsto \left( x + \mathrm{i} y, \mathrm{i} y \mathrm{e}^{-\mathrm{i} \theta} \right),
\end{equation}
so here $\theta$ measures the angle clockwise from vertical.

Geodesics in $\mathbb{H}$ are either vertical lines or semi-circles centred on the real line.
An oriented geodesic $\bm{c} \subset \mathbb{H}$ is parametrized by
\begin{equation}
  \label{eq:geodesic}
  \bm{c} = \left\{ g
    \begin{pmatrix}
      s & 0 \\
      0 & s^{-1} 
    \end{pmatrix}
    \mathrm{i} : s > 0 \right\}
\end{equation}
for some $g \in G$.
We let $\bm{c}_0$ denote the vertical geodesic through $\mathrm{i}$, which corresponds to $g = I$ in \eqref{eq:geodesic}.
Any geodesic can be written as $\bm{c} = g \bm{c}_0$ for some $g \in G$.

We say that a geodesic $\bm{c}$ as in \eqref{eq:geodesic} is positively oriented if the backwards endpoint $\bm{c}^{-}$ corresponding to $s \to 0$ is less than the forward endpoint $\bm{c}^+$ corresponding to $s \to \infty$, i.e. if the geodesic is traced from left to right.
In this case, we define the point $z_{\bm{c}} \in \mathbb{H}$ to be the point on $\bm{c}$ with the largest imaginary part, and we accordingly refer to it as the top of the geodesic.
We leave $z_{\bm{c}}$ undefined for vertical or negatively oriented geodesics. 

We let $\Gamma = \SL(2, \mathbb{Z})$, and we denote by $\Gamma_\infty$ the stabilizer in $\Gamma$ of the boundary point $\infty \in \partial \mathbb{H}$.
Explicitly,
\begin{equation}
  \Gamma_\infty=\Gamma\cap\left\{\pm
    \begin{pmatrix}
        1 & x \\
        0 & 1
    \end{pmatrix}
    :x\in\mathbb{R}\right\}=\left\{\pm
    \begin{pmatrix}
        1 & n \\
        0 & 1
    \end{pmatrix}
    :n\in\mathbb{Z}\right\}.
\end{equation}
For a geodesic $\bm{c} = g\bm{c}_0$, we let $\Gamma_{\bm{c}}$ be the stabilizer of $\bm{c}$ in $\Gamma$, so
\begin{equation}
  \label{eq:Gammacdef}
  \Gamma_{\bm{c}} = \Gamma \cap \left\{ \pm g
    \begin{pmatrix}
      s & 0 \\
      0 & s^{-1}
    \end{pmatrix}
    g^{-1} : s > 0 \right\}.
\end{equation}
If $\bm{c}$ projects to a closed geodesic on the surface $\Gamma \backslash \mathbb{H}$, then $\Gamma_{\bm{c}}$ is infinite cyclic, up to sign: 
\begin{equation}
  \label{eq:closedstablizer}
  \Gamma_{\bm{c}} = \left\{ \pm g
    \begin{pmatrix}
      \varepsilon^k & 0 \\
      0 & \varepsilon^{-k}
    \end{pmatrix}
    g^{-1} : k \in \mathbb{Z} \right\},
\end{equation}
for some $\varepsilon > 1$, and the length of $\bm{c}$ in $\Gamma \backslash \mathbb{H}$ is then $2 \log \varepsilon$. 


\bigskip


\subsection{Roots and ideals}

We recall that since $D \equiv 1 \pmod 4$, the quadratic order $\mathbb{Z}[\sqrt{D}]$ is not maximal, i.e. it is not the ring of integers of $\mathbb{Q}(\sqrt{D})$.
As such, not all ideals in $\mathbb{Z}[\sqrt{D}]$ are invertible.
The following lemma gives a criterion to determine if an ideal is invertible.
Here $(\alpha_1, \dots, \alpha_k)$ denotes the ideal of a ring (which will be made clear by the context)  generated by the ring elements $\alpha_j$.

\begin{lm}
  Let $D>0$ be a square-free integer satisfying $D\equiv 1 \pmod 4$ and $\mu \bmod m$ satisfy $\mu^2 \equiv D \pmod m$.
  Then the ideal $I=(m,\sqrt{D}+\mu)\subseteq\mathbb{Z}[\sqrt{D}]$ is invertible if and only if either $m$ or $\frac{D-\mu^2}{m}$ is odd.
  Moreover, in this case the inverse ideal of $I$ is given by $I^{-1}=\frac{1}{m}(m,\sqrt{D}-\mu)$. 
  \label{lm:general invertible ideal}
\end{lm}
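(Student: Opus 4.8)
The plan is to work with the explicit $\mathbb{Z}$-module structure of $I$ and to test invertibility against the unique intermediate order. First I would record that, because $D - \mu^2 \equiv 0 \pmod m$, the ideal $I = (m, \sqrt D + \mu)$ is in fact the rank-two lattice $I = m\mathbb{Z} + (\mu + \sqrt D)\mathbb{Z}$; one checks closure under multiplication by $\sqrt D$ using $\sqrt D(\mu + \sqrt D) = \mu(\mu + \sqrt D) + (D - \mu^2)$ together with $D - \mu^2 \in m\mathbb{Z}$. Fixing this basis turns every subsequent assertion into a finite linear-algebra check.

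For the non-invertibility half I would set $\omega = \frac{1 + \sqrt D}{2}$, the generator of the maximal order $\mathcal{O}_K = \mathbb{Z}[\omega]$ sitting above $\mathcal{O} = \mathbb{Z}[\sqrt D]$ with index two. The clean observation is that if $\omega I \subseteq I$ then $I$ cannot be invertible: were $IJ = \mathcal{O}$ for some fractional ideal $J$, we would get $\omega\mathcal{O} = \omega IJ \subseteq IJ = \mathcal{O}$, forcing $\omega \in \mathcal{O}$, which is false. So I would compute exactly when $\omega I \subseteq I$, i.e. when both $\omega m$ and $\omega(\mu + \sqrt D)$ lie in $I$. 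Matching coefficients in the basis $\{m, \mu + \sqrt D\}$ gives $\omega m \in I$ iff $m$ is even and $\mu$ is odd, and $\omega(\mu + \sqrt D) \in I$ iff $\frac{D - \mu^2}{m}$ is even and $\mu$ is odd. Hence $\omega I \subseteq I$ holds precisely when $m$ and $\frac{D-\mu^2}{m}$ are both even, the extra constraint ``$\mu$ odd'' being automatic in that case since $4 \mid D - \mu^2$ and $D \equiv 1 \pmod 4$ force $\mu^2 \equiv 1 \pmod 4$.

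For the converse together with the inverse formula I would multiply out $I \cdot (m, \sqrt D - \mu)$, generated by $m^2$, $m\sqrt D \pm m\mu$ and $(\sqrt D + \mu)(\sqrt D - \mu) = D - \mu^2$. Every generator is divisible by $m$, so the product lies inside $m\mathcal{O}$, and the reverse inclusion reduces to showing $m$ itself lies in the product, i.e. $\gcd(m^2, D - \mu^2, 2m\mu) = m$, equivalently $\gcd\!\left(m, 2\mu, \tfrac{D-\mu^2}{m}\right) = 1$. This is where square-freeness of $D$ enters: an odd prime $p$ dividing all three would give $p^2 \mid \mu^2$ and $p^2 \mid D - \mu^2$, hence $p^2 \mid D$; and the prime $2$ divides the gcd exactly in the excluded both-even case. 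Therefore, when $m$ or $\frac{D - \mu^2}{m}$ is odd, $I \cdot (m, \sqrt D - \mu) = m\mathcal{O}$, which proves simultaneously that $I$ is invertible and that $I^{-1} = \frac1m(m, \sqrt D - \mu)$.

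I expect the main obstacle to be organizing the bookkeeping so the two halves interlock cleanly: in particular verifying that the ``$\mu$ odd'' conditions produced by the $\omega I \subseteq I$ computation are genuinely subsumed by the parities of $m$ and $\frac{D - \mu^2}{m}$, and checking that the gcd argument correctly isolates $2$ as the only possible obstruction once square-freeness has eliminated all odd primes. Everything beyond these parity and divisibility bookkeeping points is routine once the lattice basis of $I$ has been pinned down.
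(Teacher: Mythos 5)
Your proof is correct, and half of it takes a genuinely different route from the paper's. For the positive direction and the formula $I^{-1}=\frac{1}{m}(m,\sqrt{D}-\mu)$, you and the paper argue identically: expand $I\cdot(m,\sqrt{D}-\mu)$, observe it sits inside $m\mathbb{Z}[\sqrt{D}]$, reduce the reverse inclusion to $\gcd\!\left(m,2\mu,\tfrac{D-\mu^2}{m}\right)=1$, eliminate odd primes using square-freeness of $D$, and note that $2$ divides this gcd exactly in the excluded both-even case. For the non-invertibility direction, however, the paper stays entirely inside $\mathbb{Z}[\sqrt{D}]$: it computes $\frac{1}{m}I\overline{I}=(2,\mu+\sqrt{D})=(2,1+\sqrt{D})=:I_0$, shows $I_0^2=2I_0$ while $I_0\neq(2)$, concludes that $I_0$ is not invertible, and then passes this back to $I$ via the observation that $I$ and $\overline{I}$ are simultaneously invertible or not. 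You instead use the multiplier-ring argument: when $m$ and $\frac{D-\mu^2}{m}$ are both even, $\omega I\subseteq I$ with $\omega=\frac{1+\sqrt{D}}{2}$, and an invertible ideal cannot admit a multiplier outside the order, since $IJ=\mathbb{Z}[\sqrt{D}]$ would give $\omega\mathbb{Z}[\sqrt{D}]=(\omega I)J\subseteq IJ=\mathbb{Z}[\sqrt{D}]$, forcing $\omega\in\mathbb{Z}[\sqrt{D}]$. Your version is the more conceptual one: it identifies the obstruction as $I$ being a module over the maximal order, which dovetails with Proposition \ref{prp:general m equiv 2 mod 4}, where precisely these roots are re-interpreted as ideals of $\mathbb{Z}[\frac{1+\sqrt{D}}{2}]$, and it proves non-invertibility of $I$ directly rather than through the product $I\overline{I}$. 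The paper's computation, in exchange, exhibits an explicit culprit, the ideal $I_0=(2,1+\sqrt{D})$ satisfying $I_0^2=2I_0$, by a purely mechanical calculation. One small imprecision on your side: you state that $m\in I\overline{I}$ is equivalent to $\gcd(m^2,D-\mu^2,2m\mu)=m$, but only the ``if'' direction is justified by your argument (integer combinations of elements of the ideal realize the gcd); since that is the only direction you use, nothing breaks.
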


Before the proof we make the following remarks:
First, the parity of $\frac{D-\mu^2}{m}$ is well-defined only if $m$ is even, so the phrase ``either $m$ or $\frac{D-\mu^2}{m}$ is odd'' would be more correctly stated as ``either $m$ is odd or $m$ is even and $\frac{D - \mu^2}{m}$ is odd.''
Second, the proof of Proposition \ref{prp:general m not equiv 2 mod 4} below shows that any ideal $I \subseteq \mathbb{Z}[\sqrt{D}]$ can be expressed as a rational integer multiple of an ideal $(m, \sqrt{D} + \mu)$ as in the lemma.

\begin{proof}
  Let $\overline{I}$ denote the ideal $(m,\sqrt{D}-\mu) \subseteq \mathbb{Z}[\sqrt{D}]$.
  We have
\begin{equation}
\begin{aligned}
   I \overline{I}&=(m^2,m(\sqrt{D}+\mu),m(\sqrt{D}-\mu),D-\mu^2)\\
   &=m(m,\sqrt{D}+\mu,\sqrt{D}-\mu,(D-\mu^2)/m)\\
   &=m(m,2\mu,\sqrt{D}-\mu,(D-\mu^2)/m),
\end{aligned}
\label{eq:II-bar'}\end{equation}
so $m(m,2\mu,\frac{D-\mu^2}{m})\subseteq I \overline{I}
\subseteq m\mathbb{Z}[\sqrt{D}]$.

Now suppose that $p\neq2$ divides all of $m, 2\mu$ and $\frac{D-\mu^2}{m}$.
Then $p^2$ divides $D=m\frac{D-\mu^2}{m}+\mu^2$, which contradicts $D$ being square-free.
It follows that if either $m$ or $\frac{D-\mu^2}{m}$ is odd, then $\gcd(m,2\mu,\frac{D-\mu^2}{m})=1$, and so $(m,2\mu,\frac{D-\mu^2}{m})=\mathbb{Z}[\sqrt{D}]$.
Therefore $I\overline{I}=m\mathbb{Z}[\sqrt{D}]$, i.e. $I^{-1}=\frac{1}{m}\overline{I}$.

Conversely, we assume $m$ and $\frac{D - \mu^2}{m}$ are even.
Dividing \eqref{eq:II-bar'} by $m$, we obtain
\begin{equation}
\begin{aligned}
    \frac{1}{m}I \overline{I}&=(m,2\mu,\mu+\sqrt{D},(D-\mu^2)/m)\\
    &=(2,\mu+\sqrt{D})=(2,1+\sqrt{D})     
\end{aligned}
\end{equation}
as $\mu$ is odd.
Letting $I_0 = (2, 1 + \sqrt{D})$,
we have
\begin{equation}
\begin{aligned}
    I_0^2&=(2,1+\sqrt{D})^2=(4,2+2\sqrt{D},1+D+2\sqrt{D})\\
    &=2(2,1+\sqrt{D},(D-1)/2)=2 I_0
\end{aligned}
\end{equation}
since $\frac{D-1}{2}$ is even.
As $I_0 \neq (2)$, it is therefore not invertible, so either $I$ or $\overline{I}$ is not invertible, and it follows that neither are as they are conjugate. 
\end{proof}

We now turn to the correspondence between roots and ideals.

\begin{prp}
  Let $D\equiv 1 \pmod 4$ and let $I\subseteq\mathbb{Z}[\sqrt{D}]$ be an ideal with no rational integer divisors greater than $1$.
  Then $I$ has a $\mathbb{Z}$-basis $\{\beta_1, \beta_2\}$, unique modulo the action of $\Gamma_\infty$, with the form
    \begin{equation}
        \begin{pmatrix}
            \beta_1 \\
            \beta_2
        \end{pmatrix}
        =
        \begin{pmatrix}
            1 & \mu \\
            0 & m \\
        \end{pmatrix}
        \begin{pmatrix}
            \sqrt{D}\\
            1
        \end{pmatrix},
    \label{eq: beta basis}\end{equation}
  where $\mu^2\equiv D \pmod m$.
  Moreover, $I$ is invertible if and only if either $m$ or $\frac{D-\mu^2}{m}$ is odd. 
    
  Conversely, given $\mu \bmod m$ such that $\mu^2\equiv D\pmod m$,
  the sublattice of $\mathbb{Z}[\sqrt{D}]$ with $\mathbb{Z}$-basis $\{\beta_1, \beta_2\}$ given above is an ideal in $\mathbb{Z}[\sqrt{D}]$, and this ideal is invertible if and only if either $m$ or $\frac{D - \mu^2}{m}$ is odd. 
\label{prp:general m not equiv 2 mod 4}\end{prp}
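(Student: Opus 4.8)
The plan is to treat the ideal $I$ as a rank-two sublattice of $\mathbb{Z}[\sqrt D] = \mathbb{Z}\sqrt D \oplus \mathbb{Z}$ and put a $\mathbb{Z}$-basis into triangular (Hermite) form with respect to the ordered basis $\{\sqrt D, 1\}$, then read off $\mu$ and $m$ and exploit the ideal property to extract the congruence $\mu^2 \equiv D \pmod m$. Concretely, I would project $I$ onto its $\sqrt D$-coordinate, obtaining a subgroup $d_1\mathbb{Z} \subseteq \mathbb{Z}$, and intersect with the rational integers, obtaining $I \cap \mathbb{Z} = m\mathbb{Z}$; together these yield a triangular basis $\beta_1 = d_1\sqrt D + \mu$, $\beta_2 = m$.

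The first key step is to force $d_1 = 1$ using the hypothesis that $I$ has no rational integer divisor greater than $1$. Requiring $I$ to be closed under multiplication by $\sqrt D$ and comparing $\sqrt D$-coefficients of $\sqrt D\beta_1 = d_1 D + \mu\sqrt D$ and $\sqrt D\beta_2 = m\sqrt D$ against the basis gives $d_1 \mid \mu$ and $d_1 \mid m$; hence if some prime $p \mid d_1$, then $p$ divides both coordinates of each basis vector, i.e. $I \subseteq p\mathbb{Z}[\sqrt D]$, contradicting the hypothesis. With $d_1 = 1$, the element $\sqrt D\beta_1 - \mu\beta_1 = D - \mu^2$ lies in $I \cap \mathbb{Z} = m\mathbb{Z}$, which gives $\mu^2 \equiv D \pmod m$ and the asserted form \eqref{eq: beta basis}. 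For uniqueness modulo $\Gamma_\infty$, I would compare two bases of the prescribed form through the matrix $\gamma \in \GL(2,\mathbb{Z})$ relating them: writing the triangular base-change matrices $M = \bigl(\begin{smallmatrix}1 & \mu\\ 0 & m\end{smallmatrix}\bigr)$ and $M'$ and computing $\gamma = M'M^{-1}$ shows $\gamma$ is upper triangular with diagonal entries $1$ and $m'/m$; since $\det\gamma = \pm 1$ and $m, m' > 0$ we get $m' = m$, whence $\gamma \in \Gamma_\infty$.

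For the converse, given $\mu^2 \equiv D \pmod m$ I would verify directly that the sublattice $L = \mathbb{Z}(\sqrt D + \mu) + \mathbb{Z} m$ is closed under multiplication by $\sqrt D$: indeed $\sqrt D\cdot m = m(\sqrt D + \mu) - \mu m \in L$, and $\sqrt D(\sqrt D + \mu) = (D - \mu^2) + \mu(\sqrt D + \mu) \in L$ because $m \mid D - \mu^2$. Thus $L$ is an ideal, and in fact $L = (m, \sqrt D + \mu)$, the ideal appearing in Lemma \ref{lm:general invertible ideal}.

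Finally, both invertibility assertions follow immediately from Lemma \ref{lm:general invertible ideal} once $I$ (respectively $L$) is identified with $(m, \sqrt D + \mu)$. I expect the main obstacle to be the bookkeeping in the existence step — deducing $d_1 = 1$ cleanly from the divisor hypothesis together with the closure condition, and confirming that the triangular generators genuinely recover the full ideal $(m,\sqrt D+\mu)$ — rather than any deep difficulty, since the delicate invertibility analysis for the non-maximal order has already been isolated in the preceding lemma.
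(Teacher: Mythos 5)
Your proposal is correct and follows essentially the same route as the paper: put a $\mathbb{Z}$-basis of $I$ in triangular (Hermite) form, use closure under multiplication by $\sqrt{D}$ together with the no-rational-divisor hypothesis to force the leading coefficient to be $1$ and to extract $\mu^2\equiv D\pmod m$, check the converse by direct closure computation, and delegate both invertibility claims to Lemma \ref{lm:general invertible ideal}. The only cosmetic difference is that you read off the divisibility constraints and the congruence coordinate-wise (via the projection $I\to d_1\mathbb{Z}$ and $I\cap\mathbb{Z}=m\mathbb{Z}$), whereas the paper computes the full change-of-basis matrix for multiplication by $\sqrt{D}$ and demands integrality of its entries — the same argument in slightly different packaging.
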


\begin{proof}
Any lattice $I\subseteq\mathbb{Z}[\sqrt{D}]$ has a unique basis $\{\beta_1, \beta_2\}$ in the Hermite normal form
\begin{equation}
    \begin{pmatrix}
        \beta_1 \\
        \beta_2
    \end{pmatrix}
    =
    \begin{pmatrix}
        b_1 & b_2 \\
        0 & b_3 \\
    \end{pmatrix}
    \begin{pmatrix}
        \sqrt{D}\\
        1
    \end{pmatrix},
\end{equation}
where $b_1>0$ and $b_2>b_3 \geq 0$.
Left multiplying a matrix $\gamma\in\Gamma_\infty$ on the basis is equivalent to adding multiples of $b_3$ to $b_2$, so considering the basis modulo $\Gamma_\infty$ is the same as considering $b_2 \bmod {b_3}$.

Now, the sublattice $I$ is an ideal if and only if $\sqrt{D}I\subseteq I$, as the quadratic order $\mathbb{Z}[\sqrt{D}]$ is generated by $\sqrt{D}$. The basis of $\sqrt{D}I$ can be expressed by
\begin{equation}
\begin{aligned}
    \begin{pmatrix}
        \sqrt{D}\beta_1 \\
        \sqrt{D}\beta_2
    \end{pmatrix}
    &=
    \begin{pmatrix}
        b_1 & b_2 \\
        0 & b_3 \\
    \end{pmatrix}
    \begin{pmatrix}
        0 & D\\
        1 & 0        
    \end{pmatrix}
    \begin{pmatrix}
        b_1 & b_2\\
        0 & b_3        
    \end{pmatrix}^{-1}
    \begin{pmatrix}
        \beta_1 \\
        \beta_2
    \end{pmatrix}\\
    &=\frac{1}{b_1b_3}
    \begin{pmatrix}
        b_2b_3 & -b_2^2+Db_1^2\\
        b_3^2 & -b_2b_3
    \end{pmatrix}
    \begin{pmatrix}
        \beta_1 \\
        \beta_2
    \end{pmatrix}.
\end{aligned}
\end{equation}
Hence $I$ is an ideal if and only if the entries of the matrix in the final step are all integers.
It is clear that we need $b_1\mid b_2$ and $b_1\mid b_3$, so let $b_2=\mu b_1$ and $b_3=mb_1$.
Moreover, $I$ not having rational integer divisors is equivalent to $b_1$, which we now assume. 
From the top right entry, we obtain the constraint $\mu^2\equiv D\pmod m$.
Finally, with the assumption that $I=(m,\sqrt{D}+\mu)$ is invertible in $\mathbb{Z}[\sqrt{D}]$, we have that either $m$ or $\frac{D-\mu^2}{m}$ is odd by Lemma \ref{lm:general invertible ideal}.

Conversely, suppose that $\mu \bmod m$ satisfies $\mu^2\equiv D\pmod m$.
Then \eqref{eq: beta basis} demonstrates that given the basis in the form above, the sublattice $I$ satisfies $\sqrt{D}I\subseteq I$ as required.
Moreover, if either $m$ or $\frac{D - \mu^2}{m}$ is odd, Lemma \ref{lm:general invertible ideal} implies that $I$ is invertible.
\end{proof}

We now turn to the roots that do not satisfy the invertibility condition of Lemma \ref{lm:general invertible ideal}.
As it happens, these roots naturally correspond to ideals in the maximal order $\mathbb{Z}[\tfrac{1 + \sqrt{D}}{2}]$, where all ideals are invertible.

\begin{prp}
  Let $D\equiv 1 \pmod 4$ and let  $J\subseteq\mathbb{Z}[\frac{1+\sqrt{D}}{2}]$ be an ideal with no rational integer divisors greater than $1$.
  Then $J$ has a $\mathbb{Z}$-basis $\{\delta_1, \delta_2\}$, unique modulo the action of $\Gamma_\infty$, with the form
    \begin{equation}
        \begin{pmatrix}
            \delta_1 \\
            \delta_2
        \end{pmatrix}
        =
        \begin{pmatrix}
            1 & \frac{\mu-1}{2} \\
            0 & \frac{m}{2} \\
        \end{pmatrix}
        \begin{pmatrix}
            \frac{1+\sqrt{D}}{2}\\
            1
        \end{pmatrix},
    \label{eq: delta basis}\end{equation}
    where $\mu^2\equiv D \pmod m$ and both $m$ and $\frac{D-\mu^2}{m}$ are even.
    
    Conversely, given $\mu \bmod m$ such that $\mu^2\equiv D\pmod m$ and both $m$ and $\frac{D-\mu^2}{m}$ are even, the sublattice of $\mathbb{Z}[\frac{1+\sqrt{D}}{2}]$ with $\mathbb{Z}$-basis $\{\delta_1, \delta_2\}$ given above is an ideal in $\mathbb{Z}[\frac{1+\sqrt{D}}{2}]$.
\label{prp:general m equiv 2 mod 4}\end{prp}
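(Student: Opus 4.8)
The plan is to mirror the proof of Proposition \ref{prp:general m not equiv 2 mod 4}, replacing the generator $\sqrt{D}$ of the non-maximal order by $\omega = \frac{1+\sqrt{D}}{2}$, which generates the maximal order $\mathbb{Z}[\frac{1+\sqrt{D}}{2}]$ as a ring. The first thing to record is the multiplication rule for $\omega$: since $D \equiv 1 \pmod 4$, the quantity $\frac{D-1}{4}$ is a rational integer, and a direct computation gives $\omega^2 = \omega + \frac{D-1}{4}$. Hence multiplication by $\omega$ acts on the column $\begin{pmatrix} \omega \\ 1 \end{pmatrix}$ by the integer matrix $M_\omega = \begin{pmatrix} 1 & \frac{D-1}{4} \\ 1 & 0 \end{pmatrix}$, which plays the role of the matrix $\begin{pmatrix} 0 & D \\ 1 & 0 \end{pmatrix}$ in the previous proof.

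Next, following exactly the Hermite normal form argument, I would write any lattice $J \subseteq \mathbb{Z}[\omega]$ as $\begin{pmatrix} \delta_1 \\ \delta_2 \end{pmatrix} = B \begin{pmatrix} \omega \\ 1 \end{pmatrix}$ with $B = \begin{pmatrix} b_1 & b_2 \\ 0 & b_3 \end{pmatrix}$, $b_1 > 0$ and $b_2 > b_3 \geq 0$; as before, reducing the basis modulo $\Gamma_\infty$ amounts to reducing $b_2$ modulo $b_3$, which gives the asserted uniqueness. Since $\mathbb{Z}[\omega]$ is generated by $\omega$, the lattice $J$ is an ideal if and only if $\omega J \subseteq J$, i.e. if and only if the conjugated matrix $B M_\omega B^{-1}$ has integer entries. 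Computing this product explicitly, its bottom row forces $b_1 \mid b_3$ and $b_1 \mid b_2$, and the hypothesis that $J$ has no rational integer divisor greater than $1$ forces $b_1 = 1$. Writing $b_3 = \frac{m}{2}$ and $b_2 = \frac{\mu-1}{2}$ (so that $m$ is even and $\mu$ is odd), it remains only to analyze the single top-right entry.

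The crux of the argument is a short simplification: with $b_1 = 1$ the top-right entry equals $\frac{-b_2(1+b_2) + \frac{D-1}{4}}{b_3}$, and substituting $b_2 = \frac{\mu-1}{2}$ the numerator collapses to $\frac{D-\mu^2}{4}$, so the entry is $\frac{D-\mu^2}{2m}$. Its integrality says exactly that $2m \mid D - \mu^2$, equivalently that $\mu^2 \equiv D \pmod m$ and $\frac{D-\mu^2}{m}$ is even; together with $m$ even (forced by $b_3 = \frac{m}{2} \in \mathbb{Z}$) this yields precisely the stated conditions. I expect this substitution—verifying that the numerator simplifies to $\frac{D-\mu^2}{4}$, so that the parity of $\frac{D-\mu^2}{m}$ is what governs whether $J$ is an ideal—to be the only genuine computational step; everything else is bookkeeping identical to the previous proposition.

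For the converse I would reverse this reasoning. Given $\mu \bmod m$ with $\mu^2 \equiv D \pmod m$ and both $m$ and $\frac{D-\mu^2}{m}$ even, note first that $\mu$ is necessarily odd, since $m$ is even and divides $D - \mu^2$, forcing $D-\mu^2$ to be even while $D$ is odd; hence the basis \eqref{eq: delta basis} is integral and well-defined. The computation above then shows that all entries of $B M_\omega B^{-1}$ are integers, so $\omega J \subseteq J$ and $J$ is an ideal of $\mathbb{Z}[\frac{1+\sqrt{D}}{2}]$. No invertibility clause is needed here, in contrast to Proposition \ref{prp:general m not equiv 2 mod 4}, because every nonzero ideal of the maximal order is invertible.
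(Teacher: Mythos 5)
Your proposal is correct and follows essentially the same route as the paper: Hermite normal form, conjugation of the multiplication-by-$\frac{1+\sqrt{D}}{2}$ matrix, divisibility forcing $b_1=1$, and integrality of the top-right entry. The only cosmetic difference is that you simplify that entry directly to $\frac{D-\mu^2}{2m}$ in the variables $(\mu,m)$, while the paper first extracts the congruence $\nu^2+\nu\equiv\frac{D-1}{4}\pmod n$ in $(\nu,n)=(\frac{\mu-1}{2},\frac{m}{2})$ and then translates; these are trivially equivalent.
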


\begin{proof}
As above, any lattice $J\subseteq\mathbb{Z}[\frac{1+\sqrt{D}}{2}]$ has a unique basis in Hermite normal form:
\begin{equation}
    \begin{pmatrix}
        \delta_1 \\
        \delta_2
    \end{pmatrix}
    =
    \begin{pmatrix}
        b_1 & b_2 \\
        0 & b_3 \\
    \end{pmatrix}
    \begin{pmatrix}
        \frac{1+\sqrt{D}}{2}\\
        1
    \end{pmatrix},
\end{equation}
where $b_1>0$ and $b_2>b_3 \geq 0$.
The sublattice $J$ is an ideal if and only if $\frac{1+\sqrt{D}}{2}J\subseteq J$.
The basis of $\frac{1+\sqrt{D}}{2}J$ can be expressed by
\begin{equation}
\begin{aligned}
    \begin{pmatrix}
        \frac{1+\sqrt{D}}{2}\delta_1 \\
        \frac{1+\sqrt{D}}{2}\delta_2
    \end{pmatrix}
    &=
    \begin{pmatrix}
        b_1 & b_2 \\
        0 & b_3 \\
    \end{pmatrix}
    \begin{pmatrix}
        1 & \frac{D-1}{4}\\
        1 & 0        
    \end{pmatrix}
    \begin{pmatrix}
        b_1 & b_2\\
        0 & b_3        
    \end{pmatrix}^{-1}
    \begin{pmatrix}
        \delta_1 \\
        \delta_2
    \end{pmatrix}\\
    &=\frac{1}{b_1b_3}
    \begin{pmatrix}
        (b_1+b_2)b_3 & -(b_1+b_2)b_2+\frac{D-1}{4}b_1^2\\
        b_3^2 & -b_2b_3
    \end{pmatrix}
    \begin{pmatrix}
        \delta_1 \\
        \delta_2
    \end{pmatrix}.
\end{aligned}
\end{equation}
As in the proof of Proposition \ref{prp:general m not equiv 2 mod 4}, we need $b_1$ to divide both $b_2$ and $b_3$ for the matrix to have integer entries, so we set $b_2=\nu b_1$ and $b_3=nb_1$, and as before we assume $b_1 =1$.

From the top right entry, we obtain the quadratic congruence $\nu^2+\nu\equiv\frac{D-1}{4}\pmod n$, which is equivalent to $(2\nu+1)^2\equiv D \pmod 4n$.
Let $\mu=2\nu+1$ and $m=2n$, and note that $\frac{D-\mu^2}{m}$ is even and $\mu^2\equiv D\pmod m$.

For the converse part, suppose that $\mu \bmod m$ satisfies $\mu^2\equiv D\pmod m$ and both $m=2n$ and $\frac{D-\mu^2}{m}=2k$ are even.
Further, we set $\mu=2\nu+1$ as $m$ even implies $\mu$ is odd.
We have $4\nu^2+4\nu+1\equiv D \pmod {4n}$, so $\nu^2+\nu\equiv \frac{D-1}{4} \pmod n$.
From \eqref{eq: delta basis} it then follows that the sublattice $J$ with basis $\{\delta_1, \delta_2\}$ as above satisfies $\frac{1+\sqrt{D}}{2}J\subseteq J$ as required.
\end{proof}

We record the following lemma, whose proof is obvious, that gives a simple characterization of invertible ideals in the special case when $D \equiv 5 \pmod 8$.

\begin{lm}
  Let $D\equiv 5 \pmod 8$ and $\mu \bmod m$ satisfy $\mu^2\equiv D\pmod m$. Then $m$ is even and $\frac{D-\mu^2}{m}$ is odd if and only if $m\equiv 0\pmod 4$.
  In particular, the ideal $I = (m, \mu + \sqrt{D}) \subseteq \mathbb{Z}[\sqrt{D}]$ is invertible if and only if $m\not\equiv 2\pmod 4$.
\label{lm: D 5 mod 8}\end{lm}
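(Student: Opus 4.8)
The plan is to reduce everything to a computation of the $2$-adic valuation of $D - \mu^2$, which under the hypothesis $D \equiv 5 \pmod 8$ is forced to equal exactly $2$ whenever $m$ is even. Write $v_2$ for the $2$-adic valuation. First I would dispose of the case $m$ odd: then neither ``$m$ is even'' nor ``$m \equiv 0 \pmod 4$'' holds, so both sides of the first biconditional are false and there is nothing to prove. It therefore remains to treat $m$ even, in which case the congruence $\mu^2 \equiv D \pmod m$ forces $\mu^2$, and hence $\mu$, to be odd.

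Next I would use that the square of an odd integer is $\equiv 1 \pmod 8$, so that $D \equiv 5 \pmod 8$ gives $D - \mu^2 \equiv 4 \pmod 8$, whence $v_2(D - \mu^2) = 2$. Writing $m = 2^a m'$ with $m'$ odd and $a \geq 1$, the divisibility $m \mid D - \mu^2$ forces $a \leq 2$, so $a \in \{1, 2\}$ and $v_2\!\big(\tfrac{D - \mu^2}{m}\big) = 2 - a$. Consequently $\frac{D - \mu^2}{m}$ is odd exactly when $a = 2$, that is, exactly when $m \equiv 0 \pmod 4$ (the constraint $a \leq 2$ being what rules out higher powers of $2$). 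This establishes the first assertion, and along the way it shows that $\frac{D - \mu^2}{m}$ is even exactly when $a = 1$, i.e. when $m \equiv 2 \pmod 4$.

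Finally, the ``in particular'' clause follows by feeding this into Lemma \ref{lm:general invertible ideal}: the ideal $I$ is non-invertible precisely when both $m$ and $\frac{D - \mu^2}{m}$ are even, which by the previous paragraph occurs precisely when $m \equiv 2 \pmod 4$; negating gives that $I$ is invertible if and only if $m \not\equiv 2 \pmod 4$. I do not expect any genuine obstacle here beyond bookkeeping. The one point worth stating explicitly, to keep the statement well-posed, is that once $m$ is even the parity of $\frac{D - \mu^2}{m}$ does not depend on the chosen representative of $\mu \bmod m$, as already observed in the remark following Lemma \ref{lm:general invertible ideal}; this is why speaking of that parity is legitimate throughout.
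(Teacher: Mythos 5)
Your proof is correct and is essentially the argument the paper has in mind (the paper declares this lemma's proof obvious and omits it): for even $m$ one notes $\mu$ is odd, so $\mu^2\equiv 1\pmod 8$ and $D-\mu^2\equiv 4\pmod 8$, and then the divisibility $m\mid D-\mu^2$ pins down the $2$-adic valuations exactly as you describe, with the ``in particular'' clause following from Lemma \ref{lm:general invertible ideal}. No gaps; your explicit treatment of the odd-$m$ case and of the well-definedness of the parity of $\frac{D-\mu^2}{m}$ is sound bookkeeping.
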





As $D\equiv 1 \pmod 4$ and $\mathbb{Z}[\frac{1+\sqrt{D}}{2}]$ is the ring of integers of the number field $\mathbb{Q}(\sqrt{D})$, we may define the narrow class group of $\mathbb{Z}[\frac{1+\sqrt{D}}{2}]$ in the usual way: the group of fractional ideals of $\mathbb{Z}[\tfrac{1 + \sqrt{D}}{2}]$ modulo ideals of the form $\zeta \mathbb{Z}[\tfrac{1 + \sqrt{D}}{2}]$ with $\zeta \in \mathbb{Q}(\sqrt{D})$ totally positive.
We recall that $\zeta = a + b \sqrt{D} \in \mathbb{Q}(\sqrt{D})$ is said to be totally positive if both $a + b \sqrt{D} > 0$ and $a - b\sqrt{D} > 0$.

We define the narrow class group of $\mathbb{Z}[\sqrt{D}]$ in the same way, with the added restriction that the ideals are invertible.
That is, the narrow class group of $\mathbb{Z}[\sqrt{D}]$ is the group of invertible, fractional ideals of $\mathbb{Z}[\sqrt{D}]$ modulo ideals of the form $\xi \mathbb{Z}[\sqrt{D}]$ for some $\xi \in \mathbb{Q}(\sqrt{D})$ totally positive.
We let the  narrow class numbers $h_1^+(D)$, respectively  $h_2^+(D)$, denote the order of the narrow class group in $\mathbb{Z}[\sqrt{D}]$, respectively $\mathbb{Z}[\tfrac{1 + \sqrt{D}}{2}]$.

We now use representatives of the narrow class groups to find new bases for invertible ideals $I\subseteq\mathbb{Z}[\sqrt{D}]$ and ideals $J\subseteq\mathbb{Z}[\frac{1+\sqrt{D}}{2}]$.
We fix representatives $I_k$, $1\leq k \leq h_1^+(D)$ of the narrow class group of $\mathbb{Z}[\sqrt{D}]$, so that given any invertible ideal $I\subseteq\mathbb{Z}[\sqrt{D}]$, there exists a totally positive $\xi\in I_k^{-1}$ such that $I=\xi I_k$.

To conveniently write a basis for $I$ using this equality, we embed $\mathbb{Q}(\sqrt{D})$ inside $\mathbb{R}^2$ via
\begin{equation*}
  a+b\sqrt{D}\mapsto
  (a+b\sqrt{D},a-b\sqrt{D}),
\end{equation*}
so that $\mathbb{Z}[\sqrt{D}]$ and $\mathbb{Z}[\tfrac{1 + \sqrt{D}}{2}]$ are lattices in $\mathbb{R}^2$.
For $\xi=a+b\sqrt{D}\in \mathbb{Q}(\sqrt{D})$, we use the notation $\overline{\xi}=a-b\sqrt{D}$ to represent its conjugate.

According to this embedding, given a $\mathbb{Z}$-basis $\{\beta_{k1},\beta_{k2}\}$ of $I_k$, we substitute
\begin{equation}
    \begin{pmatrix}
        \beta_{k1} \\
        \beta_{k2}
    \end{pmatrix}
    \rightarrow \mathfrak{B}_k=
    \begin{pmatrix}
        \beta_{k1} & \overline{\beta}_{k1}\\
        \beta_{k2} & \overline{\beta}_{k2}
      \end{pmatrix}
      ,
\end{equation}
and we may assume that $\det \mathfrak{B}_k > 0$.
Similarly, we write our basis for $\mathbb{Z}[\sqrt{D}]$ as
\begin{equation}
    \begin{pmatrix}
        \sqrt{D}\\
        1
    \end{pmatrix}
    \rightarrow
    \begin{pmatrix}
    \sqrt{D} & -\sqrt{D}\\
    1 & 1
    \end{pmatrix}.
\end{equation}
In this notation, we have that the basis $\{\xi\beta_{k1},\xi\beta_{k2}\}$ of $\xi I_k$ corresponds to the matrix
\begin{equation}
  \label{eq:xiIkbasis}
    \begin{pmatrix}
        \xi\beta_{k1} & \overline{\xi}\overline{\beta}_{k1}\\
        \xi\beta_{k2} & \overline{\xi}\overline{\beta}_{k2}
    \end{pmatrix}
    =\mathfrak{B}_k
    \begin{pmatrix}
        \xi & 0 \\
        0 & \overline{\xi}
    \end{pmatrix}.
  \end{equation}

Let $\varepsilon_1>1$ be the generator of the group of totally positive units of $\mathbb{Z}[\sqrt{D}]$, and we define
\begin{equation}
\begin{aligned}
    \Gamma_{1;k}&=\left\{\pm\mathfrak{B}_k\begin{pmatrix}
        \varepsilon_1^s & 0\\
        0 & \varepsilon_1^{-s}
    \end{pmatrix}\mathfrak{B}_k^{-1}:s\in\mathbb{Z}\right\}\\
    &=\SL(2,\mathbb{Z})\cap\left\{\pm\mathfrak{B}_k
      \begin{pmatrix}
        t & 0 \\
        0 & t^{-1}
      \end{pmatrix}
\mathfrak{B}_k^{-1}:t>0\right\},
\end{aligned}    
\end{equation}
since $I_k=\varepsilon I_k$ if and only if $\varepsilon$ is a unit of $\mathbb{Z}[\sqrt{D}]$.

\begin{prp}
  Let $D\equiv 1\pmod 4$ and $\mu \bmod m$ satisfy $\mu^2\equiv D \pmod m$.
  If either $m$ or $\frac{D-\mu^2}{m}$ is odd, then there exists a unique $1\leq k \leq h_1^+(D)$ and a unique double coset $\Gamma_\infty\gamma\Gamma_{1;k}\in\Gamma_\infty\backslash\SL(2,\mathbb{Z})\slash\Gamma_{1;k}$ such that 
    \begin{equation}
        \begin{pmatrix}
            1 & \mu \\
            0 & m
        \end{pmatrix}
        \begin{pmatrix}
            \sqrt{D} & -\sqrt{D}\\
            1 & 1
        \end{pmatrix}
        =\gamma\mathfrak{B}_k
        \begin{pmatrix}
            \xi & 0 \\
            0 & \overline{\xi}
        \end{pmatrix}
    \label{eq: gamma B_k}\end{equation}
    for some totally positive $\xi$.
    
    Conversely, if for $\Gamma_\infty\gamma\Gamma_{1;k}\in\Gamma_\infty\backslash\SL(2,\mathbb{Z})\slash\Gamma_{1;k}$ there exists  positive real numbers $\xi_1$, $\xi_2$ such that
    \begin{equation}
      \label{eq:rationalintersections}
        \gamma\mathfrak{B}_k
        \begin{pmatrix}
        \xi_1 & 0 \\
        0 & \xi_2
        \end{pmatrix}
        =
        \begin{pmatrix}
        1 & * \\
        0 & * \\
        \end{pmatrix}
        \begin{pmatrix}
            \sqrt{D} & -\sqrt{D}\\
            1 & 1
        \end{pmatrix},
    \end{equation}
    then in fact $\xi_1 = \xi$ and $\xi_2 = \overline{\xi}$ for some totally positive $\xi \in I_k^{-1}$ and we have
    \begin{equation}
        \gamma\mathfrak{B}_k
        \begin{pmatrix}
        \xi & 0 \\
        0 & \overline{\xi}
        \end{pmatrix}
        =
        \begin{pmatrix}
        1 & \mu \\
        0 & m
        \end{pmatrix}
        \begin{pmatrix}
            \sqrt{D} & -\sqrt{D}\\
            1 & 1
        \end{pmatrix},
    \end{equation}
    where $\mu \bmod m$ satisfies $\mu^2\equiv D\pmod m$ and either $m$ or $\frac{D-\mu^2}{m}$ is odd.
    \label{prp: B_k}
\end{prp}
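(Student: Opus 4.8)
The plan is to read both sides of \eqref{eq: gamma B_k} as matrices of embedded $\mathbb{Z}$-bases and to recognize the identity as the narrow-class decomposition $I = \xi I_k$ of the ideal attached to $\mu \bmod m$. Under the embedding $a + b\sqrt{D} \mapsto (a+b\sqrt{D}, a-b\sqrt{D})$, the left-hand side of \eqref{eq: gamma B_k} is the matrix whose rows are the images of $\mu + \sqrt{D}$ and $m$, hence a $\mathbb{Z}$-basis of the ideal $I = (m, \mu + \sqrt{D}) \subseteq \mathbb{Z}[\sqrt{D}]$ of Proposition \ref{prp:general m not equiv 2 mod 4}, which is primitive (no rational integer divisor) because of the leading entry $1$; by \eqref{eq:xiIkbasis} the right-hand side, ignoring $\gamma$, is the embedded basis matrix of $\xi I_k$. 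First I would invoke Lemma \ref{lm:general invertible ideal}: the hypothesis that $m$ or $\frac{D-\mu^2}{m}$ is odd makes $I$ invertible. Since the $I_k$ represent the narrow class group of $\mathbb{Z}[\sqrt{D}]$, there is then a unique $k$ for which $I$ is narrowly equivalent to $I_k$, i.e. $I = \xi I_k$ for some totally positive $\xi \in I_k^{-1}$, with $\xi$ determined up to the totally positive units $\varepsilon_1^s$, $s \in \mathbb{Z}$.

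For the forward direction I would then observe that $\mathfrak{B}_k \left(\begin{smallmatrix}\xi & 0\\0 & \overline{\xi}\end{smallmatrix}\right)$ and the left-hand side of \eqref{eq: gamma B_k} are two matrices of $\mathbb{Z}$-bases of the same lattice $I = \xi I_k$, so they differ by left multiplication by some $\gamma \in \GL(2,\mathbb{Z})$; comparing determinants, which are positive since $\det\mathfrak{B}_k > 0$, $\N(\xi) = \xi\overline{\xi} > 0$ and $m > 0$, forces $\gamma \in \SL(2,\mathbb{Z})$ and yields \eqref{eq: gamma B_k}. To see that the double coset is well defined and unique, I would track the two sources of ambiguity: changing the lift of $\mu$ modulo $m$ multiplies the left-hand side on the left by $\Gamma_\infty$, while replacing $\xi$ by $\varepsilon_1^s\xi$ changes $\gamma$ by right multiplication by an element of $\Gamma_{1;k}$ (using $\overline{\varepsilon_1} = \varepsilon_1^{-1}$ and the definition of $\Gamma_{1;k}$). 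Uniqueness of $k$ and of $\Gamma_\infty \gamma \Gamma_{1;k}$ then follows from uniqueness of the narrow class of $I$ and of $\xi$ up to totally positive units.

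For the converse I would write $\gamma\mathfrak{B}_k = \left(\begin{smallmatrix}\gamma_1 & \overline{\gamma_1}\\\gamma_2 & \overline{\gamma_2}\end{smallmatrix}\right)$, where $\{\gamma_1,\gamma_2\}$ is the $\mathbb{Z}$-basis of $I_k$ obtained by applying $\gamma$ to $\{\beta_{k1},\beta_{k2}\}$, and multiply \eqref{eq:rationalintersections} on the right by the inverse of $\left(\begin{smallmatrix}\sqrt{D} & -\sqrt{D}\\1 & 1\end{smallmatrix}\right)$. The requirement that the result be upper triangular with top-left entry $1$ becomes the linear system $\gamma_2\xi_1 = \overline{\gamma_2}\xi_2$ and $\gamma_1\xi_1 - \overline{\gamma_1}\xi_2 = 2\sqrt{D}$. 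Its determinant is $\overline{\gamma_1}\gamma_2 - \gamma_1\overline{\gamma_2} = -\det(\gamma\mathfrak{B}_k)$, and since $\det(\gamma\mathfrak{B}_k) = \det\mathfrak{B}_k = 2\N(I_k)\sqrt{D}$, solving gives $\xi_1 = \overline{\gamma_2}/\N(I_k)$ and $\xi_2 = \gamma_2/\N(I_k)$. In particular $\xi_1, \xi_2 \in \mathbb{Q}(\sqrt{D})$ with $\xi_2 = \overline{\xi_1}$, so setting $\xi = \xi_1$ we have $\xi_2 = \overline{\xi}$, and $\xi$ is totally positive because $\xi_1, \xi_2 > 0$.

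To finish the converse I would identify $\xi$ as an element of $I_k^{-1}$ using $I_k \overline{I_k} = (\N(I_k))$, which gives $I_k^{-1} = \tfrac{1}{\N(I_k)}\overline{I_k}$; since $\gamma_2 \in I_k$ we have $\overline{\gamma_2} \in \overline{I_k}$ and hence $\xi = \overline{\gamma_2}/\N(I_k) \in I_k^{-1}$. Therefore $\xi I_k \subseteq \mathbb{Z}[\sqrt{D}]$ is an integral invertible ideal whose embedded basis is $\left(\begin{smallmatrix}1 & *\\0 & *\end{smallmatrix}\right)\left(\begin{smallmatrix}\sqrt{D} & -\sqrt{D}\\1 & 1\end{smallmatrix}\right)$; the leading $1$ forces primitivity and integrality forces the entries $\mu$ and $m$ to be integers, with $m > 0$ from positivity of the determinant. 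Proposition \ref{prp:general m not equiv 2 mod 4} then gives $\mu^2 \equiv D \pmod m$, and Lemma \ref{lm:general invertible ideal} gives the parity condition, completing the proof. The main obstacle is precisely this integrality step in the converse: a priori $\xi_1, \xi_2$ are merely positive reals, and it is the rigidity coming from the antisymmetry $\overline{\det(\gamma\mathfrak{B}_k)} = -\det(\gamma\mathfrak{B}_k)$ (forcing $\det(\gamma\mathfrak{B}_k) \in \sqrt{D}\,\mathbb{Q}$) together with the norm identity $I_k \overline{I_k} = (\N(I_k))$ that pins them down to totally positive conjugate elements of $I_k^{-1}$.
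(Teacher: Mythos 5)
Your proposal is correct, and while your forward direction is essentially the paper's (identify the left side of \eqref{eq: gamma B_k} as a basis of the invertible ideal $I=(m,\mu+\sqrt{D})$, write $I=\xi I_k$ in the narrow class group, compare determinant signs to get $\gamma\in\SL(2,\mathbb{Z})$, and match the lift of $\mu$ and the unit ambiguity with $\Gamma_\infty$ and $\Gamma_{1;k}$), your converse takes a genuinely different route. The paper expands $(\xi_1,\xi_2)=c_1\beta_{k1}'+c_2\beta_{k2}'$ with a priori \emph{real} coefficients, reduces \eqref{eq:rationalintersections} to the equation $B_k (c_1,c_2)^{T}=\gamma^{-1}(1,0)^{T}$, and concludes $c_1,c_2\in\mathbb{Z}$ from Lemma \ref{lm: B_k in GL(2,Z)} ($B_k\in\GL(2,\mathbb{Z})$), whose proof runs through the Smith normal form criterion of Lemma \ref{lm: M in GL(2,Z)} together with the already-established forward direction. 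You instead solve the $2\times 2$ linear system for $(\xi_1,\xi_2)$ outright by Cramer's rule: the unique solution $\xi_1=\overline{\gamma_2}/\N(I_k)$, $\xi_2=\gamma_2/\N(I_k)$ is indeed correct (substituting back, $\gamma_1\overline{\gamma_2}-\overline{\gamma_1}\gamma_2=\det(\gamma\mathfrak{B}_k)=2\N(I_k)\sqrt{D}$ makes both equations hold), it exhibits $\xi_2=\overline{\xi_1}$ directly, and membership $\xi_1\in I_k^{-1}$ follows from the identity $I_k^{-1}=\tfrac{1}{\N(I_k)}\overline{I_k}$, which for invertible ideals of $\mathbb{Z}[\sqrt{D}]$ is exactly the content of Lemma \ref{lm:general invertible ideal} together with the remark that every ideal is a rational integer multiple of a primitive one; the remaining integrality of $\mu$ and $m$ then follows from integrality of $\xi I_k$ as you say. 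Your route is shorter and more explicit, bypassing Lemmas \ref{lm: M in GL(2,Z)} and \ref{lm: B_k in GL(2,Z)} entirely; its cost is the explicit use of the norm identity for invertible ideals of the non-maximal order and of the fact $\det\mathfrak{B}_k=2\N(I_k)\sqrt{D}$ (i.e.\ the identification of the ideal norm with the lattice index). Note, however, that the paper's heavier machinery is not wasted: $B_k\in\GL(2,\mathbb{Z})$ is reused in Corollary \ref{cor: gamma c_{I_k}} and for the matrix $D_l$ in Section \ref{sec: Congruence subgroups}, so with your argument that lemma would still have to be proved separately later. (Your sign $\overline{\gamma_1}\gamma_2-\gamma_1\overline{\gamma_2}=-\det(\gamma\mathfrak{B}_k)$ just reflects an ordering convention for the equations and does not affect the solution.)
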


\begin{proof}
  By Proposition \ref{prp:general m not equiv 2 mod 4}, the left side of \eqref{eq: gamma B_k} gives a basis for an invertible ideal $I \subseteq \mathbb{Z}[\sqrt{D}]$.
  Writing $I = \xi I_k$ for some $k$ and totally positive $\xi \in \mathbb{Q}(\sqrt{D})$, we obtain another basis for $I$ as in \eqref{eq:xiIkbasis}.
  These bases must be related by some $\gamma \in \GL(2, \mathbb{Z})$, giving \eqref{eq: gamma B_k}. Furthermore, by comparing the sign of determinants of both sides, we in fact have $\gamma \in \Gamma = \SL(2, \mathbb{Z})$.


  We note that considering $\mu$ as a residue class modulo $m$ is the same as considering the coset $\gamma$ in $\Gamma_\infty \backslash \Gamma$.
  Moreover, replacing $\xi$ up to multiplication by the group of totally positive units in $\mathbb{Z}[\sqrt{D}]$ amounts to considering the coset $\gamma\Gamma_{1;k}$.
  It follows that the double coset $\Gamma_\infty\gamma\Gamma_{1;k}$ is uniquely determined.

Conversely, suppose that there exist positive real numbers $\xi_1$, $\xi_2$ so that \eqref{eq:rationalintersections} holds.
We let $\{\beta'_{k1},\beta'_{k2}\}$ be a basis of $I_k^{-1}$ and observe that via the embedding $\mathbb{Q} \to \mathbb{R}^2$, we may express $(\xi_1, \xi_2) =c_1\beta'_{k1}+c_2\beta'_{k2}$ for some real numbers $c_1,c_2\in\mathbb{R}$.
The claim that $\xi_1 = \xi$ and $\xi_2 = \overline{\xi}$ for some totally positive $\xi \in I_k^{-1}$ is equivalent to $c_1, c_2$ being integers.

As $I_k^{-1}I_k=\mathbb{Z}[\sqrt{D}]$, we define integers $a_{kij}$ and $b_{kij}$ by
\begin{equation}
  \label{eq:abkijdef}
  \beta'_{ki}\beta_{kj}=a_{kij}+b_{kij}\sqrt{D}.
\end{equation}
This then yields
\begin{equation}
\mathfrak{B}_k
    \begin{pmatrix}
        \xi_1 & 0 \\
        0 & \xi_2
    \end{pmatrix}
    =
    (c_1A_{k1}+c_2A_{k2})
    \begin{pmatrix}
        \sqrt{D} &-\sqrt{D} \\
        1 & 1
    \end{pmatrix},
\label{eq: B_k cA}\end{equation}
where
\begin{equation}
    A_{ki}=
    \begin{pmatrix}
        b_{ki1} & a_{ki1} \\
        b_{ki2} & a_{ki2}
    \end{pmatrix}.
\end{equation}
Hence,
\begin{equation}
    B_k\begin{pmatrix}
        c_1 \\
        c_2
    \end{pmatrix}
    =
    \gamma^{-1}\begin{pmatrix}
        1 \\
        0
    \end{pmatrix},
\label{eq: B_k c_1 c_2}\end{equation}
where 
\begin{equation}
  \label{eq:Bkdef}
    B_k=
    \begin{pmatrix}
        b_{k11} & b_{k21} \\
        b_{k12} & b_{k22}
    \end{pmatrix}.
  \end{equation}
That $c_1$ and $c_2$ are in fact integers now follows immediately from Lemma \ref{lm: B_k in GL(2,Z)}, which we prove later.
\end{proof}

\begin{lm}
    Let $M\in \mathrm{M}_2(\mathbb{Z})$. If the entries of $M\begin{pmatrix}
        a \\
        b
    \end{pmatrix}$ are coprime for any coprime integers $a$ and $b$, then $M\in\GL(2,\mathbb{Z})$.
    \label{lm: M in GL(2,Z)}\end{lm}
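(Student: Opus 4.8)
The plan is to prove the contrapositive. Writing $M = \left(\begin{smallmatrix} p & q \\ r & s\end{smallmatrix}\right)$, I would assume $\det M \neq \pm 1$ and exhibit a coprime pair $(a,b)$ for which the entries $pa+qb$ and $ra+sb$ of $M\left(\begin{smallmatrix} a \\ b\end{smallmatrix}\right)$ share a common prime factor, contradicting the hypothesis. Since $\det M \neq \pm 1$, either $\det M = 0$ or $|\det M| > 1$; in either case I can fix a prime $\ell$ dividing $\det M$ (when $\det M = 0$, any prime $\ell$ works, as $0$ is divisible by all primes).

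The central step is to reduce $M$ modulo $\ell$. Because $\det M \equiv 0 \pmod \ell$, the matrix $\overline{M}$ over the field $\mathbb{F}_\ell$ is singular and hence has a nonzero kernel vector $(a_0, b_0) \in \mathbb{F}_\ell^2$. Then every integer lift $(a,b)$ of $(a_0, b_0)$ satisfies $pa + qb \equiv ra + sb \equiv 0 \pmod \ell$, so $\ell$ divides both entries of $M\left(\begin{smallmatrix} a \\ b\end{smallmatrix}\right)$ and they are not coprime.

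The main obstacle, and the only genuine point, is to guarantee that the lift $(a,b)$ can be chosen \emph{coprime}, since the hypothesis only constrains coprime inputs. Given a nonzero $(a_0, b_0) \bmod \ell$, at least one coordinate is nonzero, hence a unit modulo the prime $\ell$; say this is $a_0$ (the other case being symmetric after swapping the roles of the coordinates). I would take $a$ to be the representative of $a_0$ with $1 \leq a \leq \ell - 1$, so that $\gcd(a, \ell) = 1$, and then invoke the Chinese Remainder Theorem to choose $b$ with $b \equiv b_0 \pmod \ell$ and $b \equiv 1 \pmod a$; these congruences are simultaneously solvable precisely because $\gcd(a, \ell) = 1$. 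The resulting pair satisfies $(a,b) \equiv (a_0, b_0) \pmod \ell$ and $\gcd(a,b) = \gcd(a, 1) = 1$, furnishing the required coprime lift and completing the contradiction.

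Assembling these pieces, every prime dividing $\det M$ produces a coprime $(a,b)$ with non-coprime image, so the hypothesis forces $\det M$ to have no prime divisors and to be nonzero, i.e. $\det M = \pm 1$ and $M \in \GL(2, \mathbb{Z})$. I would also note the complementary fact motivating the statement: the adjugate identities $s(pa+qb) - q(ra+sb) = (\det M)\,a$ and $-r(pa+qb) + p(ra+sb) = (\det M)\,b$ show that $\gcd(pa+qb, ra+sb)$ always divides $\det M$, which is why the condition pins $\det M$ down exactly to $\pm 1$.
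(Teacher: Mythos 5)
Your proof is correct, but it takes a genuinely different route from the paper's. The paper writes $M$ in Smith normal form, $M = S_1 \begin{pmatrix} \lambda_1 & 0 \\ 0 & \lambda_2 \end{pmatrix} S_2$ with $S_1, S_2 \in \GL(2,\mathbb{Z})$, feeds in the coprime pairs given by the columns of $S_2^{-1}$, and reads off $\lambda_1 = \lambda_2 = 1$ from the hypothesis, since the columns of $S_1$ are primitive vectors. You instead argue by contrapositive: a prime $\ell$ dividing $\det M$ makes $\overline{M}$ singular over $\mathbb{F}_\ell$, a nonzero kernel vector $(a_0, b_0)$ then gives inputs whose image is divisible by $\ell$, and the only real work is your CRT construction of a \emph{coprime} lift $(a,b)$ of $(a_0,b_0)$ --- choosing $a$ a unit representative and $b \equiv b_0 \pmod \ell$, $b \equiv 1 \pmod a$ --- which is handled correctly (including the degenerate case $a = 1$, where the second congruence is vacuous). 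What each approach buys: the paper's argument is shorter and structural, but leans on the existence of Smith normal form; yours is more elementary, needing only linear algebra over $\mathbb{F}_\ell$ and the Chinese Remainder Theorem, and your closing observation via the adjugate identities --- that $\gcd$ of the output entries always divides $\det M$ --- is a nice converse-flavored remark that explains why the hypothesis pins down $\det M = \pm 1$ exactly, something the paper's proof does not make explicit. Both arguments generalize to $n \times n$ matrices with little change.
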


\begin{proof}
  As $M$ is a $2\times 2$ matrix with integer entries, it can be written in the Smith normal form 
\begin{equation}
    M=S_1\begin{pmatrix}
            \lambda_1 & 0 \\
            0 & \lambda_2
        \end{pmatrix}S_2,
\end{equation}
where $\lambda_1, \lambda_2$ are non-negative integers and $S_1, S_2\in\GL(2,\mathbb{Z})$.
By considering the first column of the diagonal matrix, we have 
\begin{equation}
    MS_2^{-1}\begin{pmatrix}
        1 \\
        0
    \end{pmatrix}
    =
    M\begin{pmatrix}
        a \\
        b
    \end{pmatrix}
    =\lambda_1S_1\begin{pmatrix}
        1 \\
        0
    \end{pmatrix},
\end{equation}
for some coprime integers $a$ and $b$. With the assumption on $M$, we must have $\lambda_1=1$.
By considering the second column, we similarly obtain $\lambda_2=1$, and therefore, $M\in\GL(2,\mathbb{Z})$.
\end{proof}

\begin{lm}
    For any $k$, we have $B_k=
        \begin{pmatrix}
            b_{k11} & b_{k21} \\
            b_{k12} & b_{k22}
        \end{pmatrix}
        \in\GL(2,\mathbb{Z})$.
\label{lm: B_k in GL(2,Z)}\end{lm}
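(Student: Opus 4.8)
The plan is to apply Lemma \ref{lm: M in GL(2,Z)} to the integer matrix $M = B_k$: it suffices to show that for every pair of coprime integers $c_1, c_2$, the two entries of $B_k \binom{c_1}{c_2}$ are coprime. The first step is to interpret these entries arithmetically. Setting $\xi = c_1 \beta'_{k1} + c_2 \beta'_{k2} \in I_k^{-1}$ and using the definition \eqref{eq:abkijdef}, one computes
\begin{equation*}
  \xi \beta_{kj} = (c_1 a_{k1j} + c_2 a_{k2j}) + (c_1 b_{k1j} + c_2 b_{k2j}) \sqrt{D},
\end{equation*}
so that the $j$-th entry of $B_k \binom{c_1}{c_2}$ is exactly the $\sqrt{D}$-coefficient of $\xi \beta_{kj}$. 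Thus the two entries are the $\sqrt{D}$-coefficients of the $\mathbb{Z}$-basis $\{\xi\beta_{k1}, \xi\beta_{k2}\}$ of the lattice $\xi I_k$, and the goal becomes to show these $\sqrt{D}$-coefficients are coprime.

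Next I would verify that $\xi I_k$ is an integral ideal of $\mathbb{Z}[\sqrt{D}]$ with no rational integer divisor greater than $1$. It is integral because $\xi \in I_k^{-1}$ gives $\xi I_k \subseteq I_k^{-1} I_k = \mathbb{Z}[\sqrt{D}]$. For primitivity, observe that a rational integer $d$ divides the ideal $\xi I_k$ precisely when $\frac{1}{d}\xi I_k \subseteq \mathbb{Z}[\sqrt{D}]$, that is, when $\frac{\xi}{d} \in I_k^{-1}$ (using that $I_k$ is invertible); since $\{\beta'_{k1}, \beta'_{k2}\}$ is a $\mathbb{Z}$-basis of $I_k^{-1}$, this holds exactly when $d \mid c_1$ and $d \mid c_2$. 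As $\gcd(c_1, c_2) = 1$, the only such $d$ is $1$.

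Then I invoke Proposition \ref{prp:general m not equiv 2 mod 4}: because $\xi I_k$ is an ideal of $\mathbb{Z}[\sqrt{D}]$ with no rational integer divisor greater than $1$, it has a $\mathbb{Z}$-basis of the form \eqref{eq: beta basis}, whose $\sqrt{D}$-coefficients are $1$ and $0$ and hence coprime. To transfer this to the basis $\{\xi\beta_{k1}, \xi\beta_{k2}\}$, I use that the gcd (content) of the $\sqrt{D}$-coefficient vector of any $\mathbb{Z}$-basis of a fixed lattice is independent of the chosen basis: two bases differ by an element of $\GL(2, \mathbb{Z})$, which acts on the coefficient vectors by left multiplication and therefore preserves their content. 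Consequently the $\sqrt{D}$-coefficients of $\{\xi\beta_{k1}, \xi\beta_{k2}\}$, i.e. the entries of $B_k \binom{c_1}{c_2}$, are coprime, and Lemma \ref{lm: M in GL(2,Z)} yields $B_k \in \GL(2, \mathbb{Z})$.

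The routine part is the bookkeeping identity in the first paragraph. The conceptual crux — the step I expect to require the most care — is the equivalence in the second paragraph between primitivity of the vector $(c_1, c_2)$ in the lattice $I_k^{-1}$ and the absence of a rational integer divisor of $\xi I_k$, together with the observation that the content of the $\sqrt{D}$-coefficients is a basis-independent invariant of the lattice. Once these two structural facts are in place, the result follows immediately from the already-established Proposition \ref{prp:general m not equiv 2 mod 4} and Lemma \ref{lm: M in GL(2,Z)}.
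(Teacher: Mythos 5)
Your proof is correct and follows essentially the same route as the paper: both reduce the claim to Lemma \ref{lm: M in GL(2,Z)} by forming the ideal $\xi I_k$ for $\xi = c_1\beta'_{k1} + c_2\beta'_{k2}$ primitive in $I_k^{-1}$, invoking Proposition \ref{prp:general m not equiv 2 mod 4} to get a Hermite-form basis, and observing that the entries of $B_k\begin{pmatrix} c_1 \\ c_2 \end{pmatrix}$ are the $\sqrt{D}$-coefficients of the basis $\{\xi\beta_{k1}, \xi\beta_{k2}\}$ (the paper phrases this as $B_k\begin{pmatrix} c_1 \\ c_2\end{pmatrix} = \gamma^{-1}\begin{pmatrix}1 \\ 0\end{pmatrix}$, you as invariance of the content under $\GL(2,\mathbb{Z})$ change of basis, which is the same fact). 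A small bonus of your write-up is that you actually justify, via invertibility of $I_k$, the step that $\xi I_k$ has no rational integer divisors greater than $1$, which the paper asserts without proof.
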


\begin{proof}
  Let $a, b$ be any coprime integers. Then $\xi'=a\beta_{k1}'+b\beta_{k2}'$ is a primitive vector in $I_k^{-1}$. Thus, the invertible ideal $\xi'I_k\subseteq\mathbb{Z}[\sqrt{D}]$ has no rational integer divisors greater than $1$. By Proposition \ref{prp:general m not equiv 2 mod 4}, $\xi'I_k$ has a basis of the form 
\begin{equation}
  \label{eq:xiprimeIk}
    \begin{pmatrix}
        1 & \mu' \\
        0 & m'
    \end{pmatrix}
    \begin{pmatrix}
        \sqrt{D} & -\sqrt{D} \\
        1 & 1
      \end{pmatrix}
      = \gamma \mathfrak{B}_k
      \begin{pmatrix}
        \xi' & 0 \\
        0 & \overline{\xi'}
      \end{pmatrix}
      ,
    \end{equation}
arguing as we did in the proof of the first part of Proposition \ref{prp: B_k}.
By considering the first column of \eqref{eq:xiprimeIk}, we have
\begin{equation}
  \label{eq:firstcolumn}
  B_k
  \begin{pmatrix}
        a \\
        b
    \end{pmatrix}
    =
    \gamma^{-1}\begin{pmatrix}
        1 \\
        0
    \end{pmatrix},
\end{equation}
for some $\gamma\in\GL(2,\mathbb{Z})$.
As $a$ and $b$ are arbitrary coprime integers, \eqref{eq:firstcolumn} shows that $B_k$ satisfies the condition in Lemma \ref{lm: M in GL(2,Z)}.
\end{proof}

We now turn to ideals in $\mathbb{Z}[\tfrac{1 + \sqrt{D}}{2}]$.
For $1\leq l \leq h_2^+(D)$, we fix representatives $J_l$ of the narrow class group of $\mathbb{Z}[\frac{1+\sqrt{D}}{2}]$.
As before, for any ideal $J\subseteq\mathbb{Z}[\frac{1+\sqrt{D}}{2}]$ there exists a totally positive $\zeta\in J_l^{-1}$ such that $J=\zeta J_l$.
Using the embedding $\mathbb{Q}(\sqrt{D}) \to \mathbb{R}^2$,
we associate a $\mathbb{Z}$-basis $\{\delta_{l1},\delta_{l2}\}$ of $J_l$ with the matrix
\begin{equation}
    \begin{pmatrix}
        \delta_{l1} \\
        \delta_{l2}
    \end{pmatrix}
    \rightarrow \mathfrak{D}_l=
    \begin{pmatrix}
        \delta_{l1} & \overline{\delta}_{l1}\\
        \delta_{l2} & \overline{\delta}_{l2}
      \end{pmatrix}
      ,
    \end{equation}
and as before we may assume $\det(\mathfrak{D}_l)>0$. 
Thus, the basis $\{\zeta\delta_{l1},\zeta\delta_{l2}\}$ of $\zeta J_l$ has basis matrix
\begin{equation}
    \begin{pmatrix}
        \zeta\delta_{l1} & \overline{\zeta}\overline{\delta}_{l1}\\
        \zeta\delta_{l2} & \overline{\zeta}\overline{\delta}_{l2}
    \end{pmatrix}
    =\mathfrak{D}_l
    \begin{pmatrix}
        \zeta & 0 \\
        0 & \overline{\zeta}
    \end{pmatrix}.
  \end{equation}
  
Letting $\varepsilon_2>1$ be the generator of the group of totally positive units of $\mathbb{Z}[\frac{1+\sqrt{D}}{2}]$. We define
\begin{equation}
\begin{aligned}
    \Gamma_{2;l}&=\left\{\pm\mathfrak{D}_l\begin{pmatrix}
        \varepsilon_2^s & 0\\
        0 & \varepsilon_2^{-s}
    \end{pmatrix}\mathfrak{D}_l^{-1}:s\in\mathbb{Z}\right\}\\
  &=\SL(2,\mathbb{Z})\cap\left\{\pm\mathfrak{D}_l
    \begin{pmatrix}
      t & 0 \\
      0 & t^{-1}
    \end{pmatrix}
    \mathfrak{D}_l^{-1}:t>0\right\},
\end{aligned}    
\end{equation}
since $J_l=\varepsilon J_l$ if and only if $\varepsilon$ is a unit of $\mathbb{Z}[\frac{1+\sqrt{D}}{2}]$.

Using Proposition \ref{prp:general m equiv 2 mod 4} in place of Proposition \ref{prp:general m not equiv 2 mod 4}, we obtain the following analogue of Proposition \ref{prp: B_k}.
We omit its proof as it is so similar to the proof above. 

\begin{prp}
  Let $D\equiv 1\pmod 4$ and suppose $\mu \bmod m$ satisfies $\mu^2\equiv D \pmod m$.
  If both $m$ and $\frac{D-\mu^2}{m}$ are even, then there exists a unique $1\leq l \leq h_2^+(D)$ and a unique double coset $\Gamma_\infty\gamma\Gamma_{2;l}\in\Gamma_\infty\backslash\SL(2,\mathbb{Z})\slash\Gamma_{2;l}$ such that 
    \begin{equation}
        \begin{pmatrix}
            1 & \frac{\mu-1}{m}\\
            0 & 1
        \end{pmatrix}
        \begin{pmatrix}
            1 & 0 \\
            0 & \frac{m}{2}
        \end{pmatrix}
        \begin{pmatrix}
        \frac{1+\sqrt{D}}{2} & \frac{1-\sqrt{D}}{2}\\
        1 & 1
        \end{pmatrix}
        =\gamma\mathfrak{D}_l
        \begin{pmatrix}
            \zeta & 0 \\
            0 & \overline{\zeta}
        \end{pmatrix}
    \label{eq: gamma D_l}\end{equation}
    for some totally positive $\zeta$.
    
    Conversely, if for $\Gamma_\infty\gamma\Gamma_{2;l}\in\Gamma_\infty\backslash\SL(2,\mathbb{Z})\slash\Gamma_{2;l}$ there exists positive real numbers $\zeta_1$ and $\zeta_2$ such that
    \begin{equation}
        \gamma\mathfrak{D}_l
        \begin{pmatrix}
        \zeta_1 & 0 \\
        0 & \zeta_2
        \end{pmatrix}
        =
        \begin{pmatrix}
        1 & * \\
        0 & * \\
        \end{pmatrix}
        \begin{pmatrix}
            \frac{1+\sqrt{D}}{2} & \frac{1-\sqrt{D}}{2}\\
            1 & 1
        \end{pmatrix},
    \end{equation}
    then in fact $\zeta_1 = \zeta$ and $\zeta_2 = \overline{\zeta}$ for some totally positive $\zeta \in J_l^{-1}$ and we have
    \begin{equation}
        \gamma\mathfrak{D}_l
        \begin{pmatrix}
        \zeta & 0 \\
        0 & \overline{\zeta}
        \end{pmatrix}
        =
        \begin{pmatrix}
        1 & \frac{\mu-1}{m}\\
        0 & 1
        \end{pmatrix}
        \begin{pmatrix}
            1 & 0 \\
            0 & \frac{m}{2}
        \end{pmatrix}
        \begin{pmatrix}
            \frac{1+\sqrt{D}}{2} & \frac{1-\sqrt{D}}{2}\\
            1 & 1
        \end{pmatrix},
    \end{equation}
    where $\mu \bmod m$ satisfies $\mu^2\equiv D\pmod m$ and both $m$ and $\frac{D-\mu^2}{m}$ are even.
\label{prp: D_l}\end{prp}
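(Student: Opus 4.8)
The plan is to transcribe the proof of Proposition \ref{prp: B_k} almost verbatim, under the dictionary that replaces the order $\mathbb{Z}[\sqrt{D}]$ by the maximal order $\mathbb{Z}[\tfrac{1+\sqrt{D}}{2}]$, the class representatives $I_k$ by $J_l$, the matrices $\mathfrak{B}_k$ by $\mathfrak{D}_l$, the scalar $\xi$ by $\zeta$, the group $\Gamma_{1;k}$ by $\Gamma_{2;l}$, and that invokes Proposition \ref{prp:general m equiv 2 mod 4} at every place where Proposition \ref{prp:general m not equiv 2 mod 4} was used. One genuine simplification in this setting is that $\mathbb{Z}[\tfrac{1+\sqrt{D}}{2}]$ is a Dedekind domain, so every fractional ideal is automatically invertible and there is no invertibility condition to verify.

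For the forward direction I would first note that, by the converse part of Proposition \ref{prp:general m equiv 2 mod 4}, the left-hand side of \eqref{eq: gamma D_l} is (under the embedding $\mathbb{Q}(\sqrt{D}) \to \mathbb{R}^2$) a basis matrix of an ideal $J \subseteq \mathbb{Z}[\tfrac{1+\sqrt{D}}{2}]$ with no rational integer divisor greater than $1$, the leading $1$ in the Hermite form encoding this primitivity. Since every fractional ideal of the maximal order is invertible, there is a unique narrow class $l$ and a totally positive $\zeta \in J_l^{-1}$, unique up to totally positive units, with $J = \zeta J_l$. The two bases of $J$, namely the left side of \eqref{eq: gamma D_l} and the one arising from $\zeta J_l$, then differ by some $\gamma \in \GL(2,\mathbb{Z})$, which yields \eqref{eq: gamma D_l}; comparing determinants forces $\gamma \in \SL(2,\mathbb{Z})$, since $\det \mathfrak{D}_l > 0$, $N(\zeta) = \zeta\overline{\zeta} > 0$, and the left side has determinant $\tfrac{m}{2}\sqrt{D} > 0$. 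Exactly as before, reducing $\mu$ modulo $m$ corresponds to the coset $\gamma \in \Gamma_\infty \backslash \Gamma$ and replacing $\zeta$ by a totally positive unit multiple corresponds to the coset $\gamma\Gamma_{2;l}$, so the double coset $\Gamma_\infty\gamma\Gamma_{2;l}$ is well-defined and unique.

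For the converse I would fix a basis $\{\delta'_{l1},\delta'_{l2}\}$ of $J_l^{-1}$ and write $(\zeta_1,\zeta_2) = c_1\delta'_{l1} + c_2\delta'_{l2}$ with $c_1,c_2 \in \mathbb{R}$, so that the assertion $\zeta_1 = \zeta$, $\zeta_2 = \overline{\zeta}$ for a totally positive $\zeta \in J_l^{-1}$ reduces to showing $c_1,c_2 \in \mathbb{Z}$. Since $J_l^{-1}J_l = \mathbb{Z}[\tfrac{1+\sqrt{D}}{2}]$, each product $\delta'_{li}\delta_{lj}$ lies in the maximal order; expanding it in the basis $\{1,\tfrac{1+\sqrt{D}}{2}\}$ gives integer coordinates, and the computation leading to \eqref{eq: B_k cA}--\eqref{eq: B_k c_1 c_2} goes through unchanged to produce an identity $B_l^{(2)}(c_1,c_2)^{\mathrm{T}} = \gamma^{-1}(1,0)^{\mathrm{T}}$, where $B_l^{(2)} \in \mathrm{M}_2(\mathbb{Z})$ is the matrix assembled from these coordinates. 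The integrality of $c_1,c_2$ then follows once $B_l^{(2)} \in \GL(2,\mathbb{Z})$.

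The only step that is not pure translation, and hence the part I expect to require the most care, is re-establishing Lemma \ref{lm: B_k in GL(2,Z)} in this setting, i.e. that $B_l^{(2)} \in \GL(2,\mathbb{Z})$. I would argue as in the original: for coprime $a,b$ the element $\zeta' = a\delta'_{l1} + b\delta'_{l2}$ is a primitive vector of $J_l^{-1}$, so the ideal $\zeta'J_l$ has no rational integer divisor greater than $1$; Proposition \ref{prp:general m equiv 2 mod 4} then furnishes a basis of $\zeta'J_l$ of the form \eqref{eq: delta basis}, and reading off its first column gives $B_l^{(2)}(a,b)^{\mathrm{T}} = \gamma^{-1}(1,0)^{\mathrm{T}}$ with coprime entries. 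Lemma \ref{lm: M in GL(2,Z)} then yields $B_l^{(2)} \in \GL(2,\mathbb{Z})$, completing the proof. The one bookkeeping subtlety to watch throughout is the change of basis from $\{\sqrt{D},1\}$ to $\{\tfrac{1+\sqrt{D}}{2},1\}$, which merely relabels the integer coordinates and does not affect any of the arguments.
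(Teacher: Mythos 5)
Your proposal is correct and is essentially the proof the paper intends: the paper explicitly omits the argument for Proposition \ref{prp: D_l}, stating that it follows from the proof of Proposition \ref{prp: B_k} by substituting Proposition \ref{prp:general m equiv 2 mod 4}, which is precisely the dictionary you carry out. Your matrix $B_l^{(2)}$ is exactly the matrix $D_l$ with entries $d_{lij}$ that the paper later defines in \eqref{eq:Dldef} and asserts lies in $\GL(2,\mathbb{Z})$ ``by the same reasoning as in Lemma \ref{lm: B_k in GL(2,Z)}'', so your handling of that key step matches the paper as well.
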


\subsection{Roots and geodesics}

We define oriented geodesics $\bm{c}_{I_k}$ for $1\leq k \leq h_1^+(D)$, and $\bm{c}_{J_l}$ for $1\leq l \leq h_2^+(D)$ in $\mathbb{H}$ by
\begin{equation}
    \bm{c}_{I_k}=\left\{(\det\mathfrak{B}_k)^{-\frac{1}{2}}\mathfrak{B}_k
      \begin{pmatrix}
        t & 0 \\
        0 & t^{-1}
      \end{pmatrix}
\mathrm{i} : t>0\right\}
\end{equation}
and
\begin{equation}
  \bm{c}_{J_l}=\left\{(\det\mathfrak{D}_l)^{-\frac{1}{2}}\mathfrak{D}_l
    \begin{pmatrix}
      t & 0 \\
      0 & t^{-1}
    \end{pmatrix}
    \mathrm{i} : t>0\right\}.
\end{equation}
Recalling that the top $z_{\bm{c}} \in \mathbb{H}$ of a positively oriented geodesic $\bm{c}$ is the point on $\bm{c}$ which has the largest imaginary part on $\bm{c}$, we now interpret Propositions \ref{prp: B_k} and \ref{prp: D_l} geometrically as follows.

\begin{prp}
     Let $D\equiv 1\pmod 4$ and let $\mu \bmod m$ satisfy $\mu^2\equiv D \pmod m$. If either $m$ or $\frac{D-\mu^2}{m}$ is odd, then there exists a unique $1\leq k\leq h_1^+(D)$ and a unique double coset $\Gamma_\infty\gamma\Gamma_{1;k}\in\Gamma_\infty\backslash \Gamma \slash\Gamma_{1;k}$ such that
     \begin{equation}
       \label{eq:rootstops1}
         z_{\gamma\bm{c}_{I_k}}\equiv\frac{\mu}{m}+\mathrm{i}\frac{\sqrt{D}}{m} \pmod {\Gamma_\infty}.
     \end{equation}
     Conversely, given $1\leq k\leq h_1^+(D)$ and a double coset $\Gamma_\infty\gamma\Gamma_{1;k}$ such that $\gamma\bm{c}_{I_k}$ is positively oriented, there exists a unique positive integer $m$ and a residue class $\mu \bmod m$ with either $m$ or $\frac{D-\mu^2}{m}$ odd satisfying \eqref{eq:rootstops1}.
\label{prp: congruence I SL(2,Z)}\end{prp}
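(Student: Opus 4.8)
The plan is to transport the algebraic correspondence of Proposition \ref{prp: B_k} to the geometric statement by directly computing the top of the geodesic $\gamma\bm{c}_{I_k}$. The engine is the elementary observation that for real $m > 0$ and $\mu$, the matrix $M = \begin{pmatrix} \mu + \sqrt{D} & \mu - \sqrt{D} \\ m & m \end{pmatrix} = \begin{pmatrix} 1 & \mu \\ 0 & m \end{pmatrix}\begin{pmatrix} \sqrt{D} & -\sqrt{D} \\ 1 & 1 \end{pmatrix}$ has $\det M = 2m\sqrt{D} > 0$, forward endpoint $M(\infty) = \frac{\mu + \sqrt{D}}{m}$ strictly above backward endpoint $M(0) = \frac{\mu - \sqrt{D}}{m}$, and satisfies $\Im\big(M\,\mathrm{diag}(t,t^{-1})\mathrm{i}\big) = \frac{2\sqrt{D}}{m(t^2 + t^{-2})}$. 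Thus the geodesic $\{M\,\mathrm{diag}(t,t^{-1})\mathrm{i} : t > 0\}$ is positively oriented and attains its largest imaginary part at $t = 1$, so its top is $M(\mathrm{i}) = \frac{\mu}{m} + \mathrm{i}\frac{\sqrt{D}}{m}$. This single computation is the bridge between the two propositions.

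For the forward direction, given $\mu \bmod m$ with the stated parity condition I would invoke Proposition \ref{prp: B_k} to produce the unique $k$ and double coset $\Gamma_\infty \gamma \Gamma_{1;k}$ realizing \eqref{eq: gamma B_k}, i.e. $M = \gamma \mathfrak{B}_k\,\mathrm{diag}(\xi, \overline{\xi})$ with $\xi$ totally positive. Since $\xi, \overline{\xi} > 0$, the diagonal factor is a positive scalar times $\mathrm{diag}(t_0, t_0^{-1})$ with $t_0 = \sqrt{\xi/\overline{\xi}}$, and because positive scalars and the factor $(\det\mathfrak{B}_k)^{-1/2}$ act trivially under M\"obius transformations, a reparametrization identifies $\{M\,\mathrm{diag}(t,t^{-1})\mathrm{i}\}$ with $\gamma\bm{c}_{I_k}$ as oriented geodesics. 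The computation above then gives $z_{\gamma\bm{c}_{I_k}} = \frac{\mu}{m} + \mathrm{i}\frac{\sqrt{D}}{m}$, which is \eqref{eq:rootstops1}, the reduction mod $\Gamma_\infty$ reflecting the freedom in the representative $\gamma$ (horizontal translations and $-I$ preserving tops). I would also record that the top depends only on the double coset: right multiplication by $\Gamma_{1;k}$ merely reparametrizes $\bm{c}_{I_k}$, while left multiplication by $\Gamma_\infty$ moves the top within its $\Gamma_\infty$-orbit.

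For the converse I start from a double coset with $\gamma\bm{c}_{I_k}$ positively oriented and must manufacture positive $\xi_1, \xi_2$ satisfying \eqref{eq:rationalintersections}, after which Proposition \ref{prp: B_k} supplies the integers $m, \mu$ and the parity condition. Writing $\gamma\mathfrak{B}_k = \begin{pmatrix} p_1 & p_2 \\ q_1 & q_2 \end{pmatrix}$, the two requirements defining \eqref{eq:rationalintersections} (equal bottom entries, and top-left entry $1$ after clearing the basis matrix $\begin{pmatrix} \sqrt{D} & -\sqrt{D} \\ 1 & 1\end{pmatrix}$) become the linear system $q_1\xi_1 = q_2\xi_2$ and $p_1\xi_1 - p_2\xi_2 = 2\sqrt{D}$, whose solution is $\xi_1 = 2\sqrt{D}\,q_2/\det\mathfrak{B}_k$ and $\xi_2 = 2\sqrt{D}\,q_1/\det\mathfrak{B}_k$. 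Here lies the main obstacle: these are positive precisely when $q_1, q_2 > 0$, whereas positive orientation only forces $q_1 q_2 > 0$, since the endpoints $p_1/q_1$ and $p_2/q_2$ satisfy $\frac{p_1}{q_1} - \frac{p_2}{q_2} = \frac{\det\mathfrak{B}_k}{q_1 q_2}$ and $\det\mathfrak{B}_k > 0$. The resolution is that a double coset determines its representative only up to sign: replacing $\gamma$ by $-\gamma \in \Gamma_\infty\gamma$ leaves the oriented geodesic unchanged but flips the bottom row of $\gamma\mathfrak{B}_k$, so I may normalize to $q_1, q_2 > 0$ and thereby obtain $\xi_1, \xi_2 > 0$.

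Finally, uniqueness in both directions would be bootstrapped from the uniqueness clause of Proposition \ref{prp: B_k} together with the fact that the top determines $(m, \mu \bmod m)$: the imaginary part $\frac{\sqrt{D}}{m}$ pins down $m$ and the real part mod $1$ pins down $\mu \bmod m$. Concretely, any second pair $(k', \Gamma_\infty\gamma'\Gamma_{1;k'})$ satisfying \eqref{eq:rootstops1} has a positively oriented geodesic, so the converse construction applies and yields the identity \eqref{eq: gamma B_k} for the same $(m, \mu)$; the uniqueness in Proposition \ref{prp: B_k} then identifies the double cosets. I expect the only genuinely delicate point to be the sign normalization in the converse; everything else is a translation of Proposition \ref{prp: B_k} through the single top-of-geodesic computation.
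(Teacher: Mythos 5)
Your proposal is correct and takes essentially the same route as the paper: both directions are transported through Proposition~\ref{prp: B_k}, with the geometric bridge being that the geodesic attached to the basis matrix $\begin{pmatrix} 1 & \mu \\ 0 & m \end{pmatrix}\begin{pmatrix} \sqrt{D} & -\sqrt{D} \\ 1 & 1 \end{pmatrix}$ is positively oriented with top $\frac{\mu}{m}+\mathrm{i}\frac{\sqrt{D}}{m}$, and that positive orientation of $\gamma\bm{c}_{I_k}$ conversely produces positive $\xi_1,\xi_2$ in \eqref{eq:rationalintersections}. The only difference is presentational: the paper reads the top off the Iwasawa form (the point with rightward horizontal tangent vector in the $\mathrm{T}_1(\mathbb{H})$ identification, which also absorbs the sign issue into working modulo $\pm I$), whereas you maximize the imaginary part directly and make the normalization $\gamma\mapsto-\gamma$ explicit — if anything, a slightly more careful account of the same argument.
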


\begin{proof}
  In the identification of $G/\{\pm I\}$ with the unit tangent bundle $\mathrm{T}_1(\mathbb{H})$, the matrices
\begin{equation}
    \begin{pmatrix}
        * & * \\
        0 & *
    \end{pmatrix}
    \begin{pmatrix}
        \cos\frac{\pi}{4} & -\sin\frac{\pi}{4} \\
        \sin\frac{\pi}{4} & \cos\frac{\pi}{4}
    \end{pmatrix}
\label{eq: Iwasawa matrices}\end{equation}
are identified with points in $\mathbb{H}$ whose tangent vectors point horizontally to the right.
The intersection, if it exists, of these matrices with those of the form
\begin{equation}
    (\det\mathfrak{B}_k)^{-\frac{1}{2}}\gamma\mathfrak{B}_k\begin{pmatrix}
        * & 0 \\
        0 & *
    \end{pmatrix}
\end{equation}
is the point on $\gamma\bm{c}_{I_k}$ which has a right horizontal tangent vector. Such a point exists exactly when $\gamma\bm{c}_{I_k}$ is positively oriented, and its imaginary part is the largest on the geodesic $\gamma\bm{c}_{I_k}$. Scaling and rewriting \eqref{eq: gamma B_k}, we have
\begin{equation*}
  (\det\mathfrak{B}_k)^{-\frac{1}{2}}\gamma\mathfrak{B}_k
  \begin{pmatrix}
    t & 0 \\
    0 & t^{-1}
  \end{pmatrix}
    =
    \begin{pmatrix}
         1 & \frac{\mu}{m} \\
         0 & 1 \\
    \end{pmatrix}
    \begin{pmatrix}
         \left(\frac{\sqrt{D}}{m}\right)^{\frac{1}{2}} & 0 \\
         0 & \left(\frac{\sqrt{D}}{m}\right)^{-\frac{1}{2}} \\
    \end{pmatrix}
    \begin{pmatrix}
        \frac{1}{\sqrt{2}} & -\frac{1}{\sqrt{2}} \\
        \frac{1}{\sqrt{2}} & \frac{1}{\sqrt{2}} \\
    \end{pmatrix},
\end{equation*}
finishing the proof via Proposition $\ref{prp: B_k}$.
\end{proof}

We also record the following proposition, which has a nearly identical proof, the only difference being the rewriting
\begin{multline}
    \begin{pmatrix}
        1 & \frac{\mu-1}{2} \\
        0 & \frac{m}{2}
    \end{pmatrix}
    \begin{pmatrix}
        \frac{1+\sqrt{D}}{2} & \frac{1-\sqrt{D}}{2} \\
        1 & 1
    \end{pmatrix}
    \\
    =  ( \tfrac{1}{2} m \sqrt{D})^{\frac{1}{2}}
    \begin{pmatrix}
        1 & \frac{\mu}{m} \\
        0 & 1
    \end{pmatrix}
    \begin{pmatrix}
      \left(\frac{\sqrt{D}}{m}\right)^{\frac{1}{2}} & 0 \\
      0 & \left(\frac{\sqrt{D}}{m}\right)^{-\frac{1}{2}}
    \end{pmatrix}
    \begin{pmatrix}
        \frac{1}{\sqrt 2} & -\frac{1}{\sqrt 2} \\
        \frac{1}{\sqrt 2} & \frac{1}{\sqrt 2}
    \end{pmatrix}.
\end{multline}

\begin{prp}
  Let $D\equiv 1\pmod 4$ and let $\mu \bmod m$ satisfy $\mu^2\equiv D \pmod m$ with both $m$ and $\frac{D-\mu^2}{m}$ are even.
  Then there exists a unique $1\leq l\leq h_2^+(D)$ and a unique double coset $\Gamma_\infty\gamma\Gamma_{2;l}\in\Gamma_\infty\backslash \Gamma \slash\Gamma_{2;l}$ such that
  \begin{equation}
    \label{eq:rootstops2}
         z_{\gamma\bm{c}_{J_l}}\equiv\frac{\mu}{m}+\mathrm{i}\frac{\sqrt{D}}{m} \pmod {\Gamma_\infty}.
     \end{equation}
     Conversely, given $1\leq l\leq h_2^+(D)$ and a double coset $\Gamma_\infty\gamma\Gamma_{2;l}$ such that $\gamma\bm{c}_{J_l}$ is positively oriented, there exists a unique positive integer $m$ and a residue class $\mu \bmod m$ with both $m$ and $\frac{D-\mu^2}{m}$ even satisfying \eqref{eq:rootstops2}.
\label{prp: congruence J SL(2,Z)}\end{prp}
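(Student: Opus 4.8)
The plan is to adapt the proof of Proposition \ref{prp: congruence I SL(2,Z)} essentially verbatim, with $\mathfrak{D}_l$, $\Gamma_{2;l}$, $\bm{c}_{J_l}$ and Proposition \ref{prp: D_l} taking over the roles of $\mathfrak{B}_k$, $\Gamma_{1;k}$, $\bm{c}_{I_k}$ and Proposition \ref{prp: B_k}. First I would recall from the Iwasawa description \eqref{eq:iwasawa} that a group element lies in the product of upper-triangular matrices with the rotation $k_{\pi/4}$ (the half-angle $\pi/4$, i.e.\ $\theta = \pi/2$) exactly when its image in $\mathrm{T}_1(\mathbb{H})$ has unit tangent pointing horizontally to the right. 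Intersecting this set with the parametrizing family $(\det\mathfrak{D}_l)^{-1/2}\gamma\mathfrak{D}_l\,\mathrm{diag}(t,t^{-1})$ of the geodesic $\gamma\bm{c}_{J_l}$ singles out the unique point of $\gamma\bm{c}_{J_l}$ whose tangent points to the right; such a point exists precisely when $\gamma\bm{c}_{J_l}$ is positively oriented, and since the imaginary part has a unique maximum along a geodesic, this point is the top $z_{\gamma\bm{c}_{J_l}}$.

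The key algebraic input is the rewriting displayed just before the statement, which expresses the left-hand basis matrix of \eqref{eq: gamma D_l} as the scalar $(\tfrac12 m\sqrt{D})^{1/2}$ times $u_{\mu/m}\,a_{\sqrt{D}/m}\,k_{\pi/4}$, where I write $u_x$, $a_y$, $k_{\pi/4}$ for the unipotent, diagonal and rotation factors of the Iwasawa form. This is a direct matrix computation, the only arithmetic being $\mu = 2\nu+1$ and $m = 2n$ from Proposition \ref{prp:general m equiv 2 mod 4}, which make $\tfrac{\mu-1}{2}$ and $\tfrac{m}{2}$ integers. Combining the rewriting with \eqref{eq: gamma D_l} gives $\gamma\mathfrak{D}_l\,\mathrm{diag}(\zeta,\overline\zeta) = (\tfrac12 m\sqrt{D})^{1/2}\,u_{\mu/m}\,a_{\sqrt{D}/m}\,k_{\pi/4}$. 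Since $\zeta$ is totally positive I may factor $\mathrm{diag}(\zeta,\overline\zeta) = \sqrt{\zeta\overline\zeta}\,\mathrm{diag}(t_0,t_0^{-1})$ with $t_0 = \sqrt{\zeta/\overline\zeta}>0$; comparing determinants then forces the residual positive scalar $(\det\mathfrak{D}_l)^{-1/2}(\tfrac12 m\sqrt{D})^{1/2}(\zeta\overline\zeta)^{-1/2}$ to equal $1$, so that $(\det\mathfrak{D}_l)^{-1/2}\gamma\mathfrak{D}_l\,\mathrm{diag}(t_0,t_0^{-1}) = u_{\mu/m}\,a_{\sqrt{D}/m}\,k_{\pi/4}$. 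Reading off the Iwasawa coordinates yields $z_{\gamma\bm{c}_{J_l}} = \tfrac{\mu}{m}+\mathrm{i}\tfrac{\sqrt{D}}{m}$, which is \eqref{eq:rootstops2}.

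With this geometric dictionary in place, the existence, uniqueness and converse assertions transfer directly from Proposition \ref{prp: D_l}: the residue class $\mu\bmod m$ corresponds to the $\Gamma_\infty$-coset of $\gamma$, multiplying $\zeta$ by a totally positive unit of $\mathbb{Z}[\tfrac{1+\sqrt D}{2}]$ (equivalently replacing $\gamma$ within the coset $\gamma\Gamma_{2;l}$) leaves $z_{\gamma\bm{c}_{J_l}}$ unchanged modulo $\Gamma_\infty$, and the index $l$ is pinned down by the narrow ideal class of $J$. For the converse one runs the computation backwards: a double coset with $\gamma\bm{c}_{J_l}$ positively oriented produces, via the right-horizontal intersection point, positive reals $\zeta_1,\zeta_2$ satisfying the hypothesis of Proposition \ref{prp: D_l}, which forces $\zeta_1=\zeta$, $\zeta_2=\overline\zeta$ for a totally positive $\zeta\in J_l^{-1}$ and recovers $\mu\bmod m$ with both $m$ and $\tfrac{D-\mu^2}{m}$ even.

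I expect the only genuinely delicate point to be the bookkeeping of the scalar factors: one must verify that the positive scalar relating the two sides is forced to be $1$, so that the identified group element really lands in $\mathbb{H}$ at height $\tfrac{\sqrt D}{m}$ rather than at a rescaled height, and that the passage between $\mathrm{diag}(\zeta,\overline\zeta)$ and $\mathrm{diag}(t_0,t_0^{-1})$ respects total positivity, so that $t_0>0$ and the geodesic is indeed traversed from left to right. Everything else is identical to the proof of Proposition \ref{prp: congruence I SL(2,Z)}, which is why it may safely be omitted.
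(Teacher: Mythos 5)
Your proposal is correct and follows exactly the route the paper takes: the paper proves this proposition by declaring it "nearly identical" to the proof of Proposition \ref{prp: congruence I SL(2,Z)}, with the sole new ingredient being the displayed rewriting of $\begin{pmatrix} 1 & \frac{\mu-1}{2} \\ 0 & \frac{m}{2} \end{pmatrix}\begin{pmatrix} \frac{1+\sqrt{D}}{2} & \frac{1-\sqrt{D}}{2} \\ 1 & 1 \end{pmatrix}$ into the scaled Iwasawa form, which is precisely the identity you invoke. Your additional bookkeeping (factoring $\mathrm{diag}(\zeta,\overline{\zeta})$ into a scalar times $\mathrm{diag}(t_0,t_0^{-1})$ and using determinants to force the residual scalar to be $1$) is a correct elaboration of the "scaling" step the paper leaves implicit, and the transfer of existence, uniqueness and the converse via Proposition \ref{prp: D_l} matches the paper's use of Proposition \ref{prp: B_k} in the model case.
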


\section{Congruence subgroups}
\label{sec: Congruence subgroups}

In the previous section, we considered the $\Gamma = \SL(2,\mathbb{Z})$-orbits of geodesics.
In this section, we work with the orbits under the action of the congruence subgroups
\begin{equation}
    \Gamma_0(n)=\left\{\begin{pmatrix}
        a & b \\
        c & d
    \end{pmatrix}\in\SL(2,\mathbb{Z}):c\equiv 0 \pmod n\right\}.
\end{equation}
We find below that given a fixed root $\nu \bmod n$, we can generate all roots $\mu \bmod m$ satisfying $m\equiv 0\pmod n$ and $\mu\equiv\nu\pmod n$ as follows:
To obtain those roots for which either $m$ or $\frac{D-\mu^2}{m}$ is odd, we pick cosets $\Gamma_0(n)\gamma_{1;k}$ (depending on $n$ and $\nu$) in $\Gamma_0(n)\backslash\Gamma$ and consider the $\Gamma_0(n)$-orbits of the geodesics $\gamma_{1;k}\bm{c}_{I_k}$, and similarly, for the remaining roots for which both $m$ and $\frac{D-\mu^2}{m}$ are even, we choose coset representatives $\gamma_{2;l}$ (again depending on $n$ and $\nu$) for either $\Gamma_0(n)\backslash\Gamma$ or $\Gamma_0(\frac{n}{2})\backslash\Gamma$ (depending on the parity of $n$) and correspondingly consider the $\Gamma_0(n)$-orbits or $\Gamma_0(\frac{n}{2})$-orbits of the geodesics $\gamma_{2;l}\bm{c}_{J_l}$.

We recall that $\{\beta_{k1}, \beta_{k2}\}$ and $\{\beta'_{k1}, \beta_{k2}'\}$ are respectively bases of the invertible ideals $I_k$ and $I_k^{-1}$ of $\mathbb{Z}[\sqrt{D}]$.
We further recall the integer coefficients $a_{kij}, b_{kij}$ determined by  $\beta'_{ki}\beta_{kj}=a_{kij}+b_{kij}\sqrt{D}$.
Similarly, for $\mathbb{Z}[\tfrac{1+\sqrt{D}}{2}]$, we have bases $\{\delta_{l1}, \delta_{l2}\}$ and $\{\delta'_{l1}, \delta_{l2}'\}$ of the ideals $J_l$ and $J_l^{-1}$ respectively.
Here, the analogous integer coefficients $c_{lij}, d_{lij}$ are determined by $\delta'_{li}\delta_{lj}=c_{lij}+d_{lij}\frac{1+\sqrt{D}}{2}$.

Finally, we recall the matrix $B_k$ defined in \eqref{eq:Bkdef} and proven to be in $\GL(2, \mathbb{Z})$ in Lemma \ref{lm: B_k in GL(2,Z)}.
We similarly define
\begin{equation}
  \label{eq:Dldef}
  D_l =
  \begin{pmatrix}
    d_{l11} & d_{l21} \\
    d_{l12} & d_{l22}
  \end{pmatrix}
  ,
\end{equation}
which can be seen to be in $\GL(2, \mathbb{Z})$ by the same reasoning as in Lemma \ref{lm: B_k in GL(2,Z)}.

\begin{cor}
  As $\gamma=\begin{pmatrix} * & * \\ c & d \end{pmatrix}$ runs through the representatives of the double coset $\Gamma_\infty\backslash\Gamma \slash\Gamma_{1;k}$ such that $\gamma\bm{c}_{I_k}$ is positively oriented, 
  \begin{equation}
        \begin{pmatrix}
            \mu \\
            m
        \end{pmatrix}
        =
        \det(B_k)\gamma
        \begin{pmatrix}
            a_{k11}b_{k21}-a_{k21}b_{k11} & a_{k11}b_{k22}-a_{k21}b_{k12} \\
            a_{k12}b_{k21}-a_{k22}b_{k11} & a_{k12}b_{k22}-a_{k22}b_{k12}
        \end{pmatrix}
        \begin{pmatrix}
            c \\
            d
        \end{pmatrix}
    \label{eq: gamma c_{I_k}}\end{equation}
  parametrizes all $\mu$ and $m$ satisfying $\mu^2\equiv D\pmod m$ with either $m$ or $\frac{D-\mu^2}{m}$ odd.

  Similarly, as $\gamma=\begin{pmatrix} * & * \\ c & d \end{pmatrix}$ runs through the representatives of the double coset ${\Gamma_\infty\backslash\Gamma \slash\Gamma_{2;l}}$ such that $\gamma\bm{c}_{J_l}$ is positively oriented, 
  \begin{equation}
        \begin{pmatrix}
            \frac{\mu-1}{2} \\
            \frac{m}{2}
        \end{pmatrix}
        =
        \det(D_l)\gamma
        \begin{pmatrix}
            c_{l11}d_{l21}-c_{l21}d_{l11} & c_{l11}d_{l22}-c_{l21}d_{l12} \\
            c_{l12}d_{l21}-c_{l22}d_{l11} & c_{l12}d_{l22}-c_{l22}d_{l12}
        \end{pmatrix}
        \begin{pmatrix}
            c \\
            d
        \end{pmatrix}
    \label{eq: gamma c_{J_l}}\end{equation}
    parametrizes all $\mu$ and $m$ satisfying $\mu^2\equiv D\pmod m$ with both $m$ and $\frac{D-\mu^2}{m}$ even.
    \label{cor: gamma c_{I_k}}
\end{cor}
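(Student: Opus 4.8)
The plan is to treat this corollary as an explicit, computational repackaging of Proposition \ref{prp: B_k} (equivalently its geometric form Proposition \ref{prp: congruence I SL(2,Z)}): starting from the matrix identity \eqref{eq: gamma B_k}, I extract closed-form expressions for $\mu$ and $m$ in terms of the bottom row $(c,d)$ of $\gamma$ and the fixed integers $a_{kij}, b_{kij}$. Abbreviate $V = \begin{pmatrix} \sqrt{D} & -\sqrt{D} \\ 1 & 1 \end{pmatrix}$. For a $\gamma$ in a double coset with $\gamma\bm{c}_{I_k}$ positively oriented, Proposition \ref{prp: B_k} supplies a totally positive $\xi = c_1\beta'_{k1} + c_2\beta'_{k2}$ realizing \eqref{eq: gamma B_k}, and \eqref{eq: B_k cA} lets me replace the factor $\mathfrak{B}_k \begin{pmatrix} \xi & 0 \\ 0 & \overline{\xi} \end{pmatrix}$ by $(c_1 A_{k1} + c_2 A_{k2})\,V$. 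Since the identical factor $V$ (with $\det V = 2\sqrt{D} \neq 0$) appears on the left of \eqref{eq: gamma B_k}, I cancel it to obtain
\begin{equation*}
  \begin{pmatrix} 1 & \mu \\ 0 & m \end{pmatrix} = \gamma\,(c_1 A_{k1} + c_2 A_{k2}).
\end{equation*}

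Next I would read off the two columns of this identity. Writing out $c_1 A_{k1} + c_2 A_{k2}$ shows that its first column is $B_k\,(c_1, c_2)^{\mathrm{T}}$ and its second column is $A_k'\,(c_1, c_2)^{\mathrm{T}}$, where $B_k$ is the matrix \eqref{eq:Bkdef} and $A_k' = \begin{pmatrix} a_{k11} & a_{k21} \\ a_{k12} & a_{k22} \end{pmatrix}$. The first column then gives $\gamma B_k (c_1, c_2)^{\mathrm{T}} = (1,0)^{\mathrm{T}}$, which is \eqref{eq: B_k c_1 c_2}; with $\det\gamma = 1$ and $\gamma = \begin{pmatrix} * & * \\ c & d \end{pmatrix}$ we have $\gamma^{-1}(1,0)^{\mathrm{T}} = (d, -c)^{\mathrm{T}}$, so $(c_1, c_2)^{\mathrm{T}} = B_k^{-1}(d, -c)^{\mathrm{T}}$. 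The second column gives $(\mu, m)^{\mathrm{T}} = \gamma A_k'\,(c_1, c_2)^{\mathrm{T}}$, and substituting the previous line yields $(\mu, m)^{\mathrm{T}} = \gamma A_k' B_k^{-1}(d, -c)^{\mathrm{T}}$.

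It then remains to match $A_k' B_k^{-1}(d,-c)^{\mathrm{T}}$ with $\det(B_k)$ times the displayed matrix applied to $(c,d)^{\mathrm{T}}$. Here I invoke Lemma \ref{lm: B_k in GL(2,Z)}: since $\det B_k = \pm 1$, we have $1/\det B_k = \det B_k$ and $B_k^{-1} = \det(B_k)\operatorname{adj}(B_k)$ with integer entries. Writing $(d,-c)^{\mathrm{T}} = \begin{pmatrix} 0 & 1 \\ -1 & 0 \end{pmatrix}(c,d)^{\mathrm{T}}$ and expanding $A_k'\operatorname{adj}(B_k)\begin{pmatrix} 0 & 1 \\ -1 & 0 \end{pmatrix}$ reproduces precisely the matrix in \eqref{eq: gamma c_{I_k}}, with the factor $\det(B_k)$ emerging in front. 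This last step is short but sign-sensitive, and the only place demanding care is tracking which index labels rows versus columns in $B_k$, $A_k'$, and the adjugate; this purely organizational check is the main obstacle. The assertion that the formula parametrizes all $\mu, m$ with $\mu^2 \equiv D \pmod m$ and either $m$ or $\frac{D-\mu^2}{m}$ odd then follows from Proposition \ref{prp: B_k} combined over $1 \le k \le h_1^+(D)$, since its hypotheses are exactly those imposed in the corollary.

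Finally, the second formula is proved mutatis mutandis: I start from \eqref{eq: gamma D_l} in place of \eqref{eq: gamma B_k}, use the analogue of \eqref{eq: B_k cA} built from the coefficients $c_{lij}, d_{lij}$ defined by $\delta'_{li}\delta_{lj} = c_{lij} + d_{lij}\frac{1+\sqrt{D}}{2}$, cancel the common factor $\begin{pmatrix} \frac{1+\sqrt{D}}{2} & \frac{1-\sqrt{D}}{2} \\ 1 & 1 \end{pmatrix}$, and read off the two columns. The matrix $D_l$ of \eqref{eq:Dldef} plays the role of $B_k$, lies in $\GL(2,\mathbb{Z})$ by the same argument as Lemma \ref{lm: B_k in GL(2,Z)}, and the resulting identity $\left(\frac{\mu-1}{2}, \frac{m}{2}\right)^{\mathrm{T}} = \det(D_l)\,\gamma\,(\cdots)(c,d)^{\mathrm{T}}$ is exactly \eqref{eq: gamma c_{J_l}}, with the parametrization claim inherited from Proposition \ref{prp: D_l}.
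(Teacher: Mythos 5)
Your proposal is correct and follows essentially the same route as the paper's own proof: extract the identity $\begin{pmatrix} 1 & \mu \\ 0 & m \end{pmatrix} = \gamma(c_1A_{k1}+c_2A_{k2})$ from Proposition \ref{prp: B_k} and \eqref{eq: B_k cA}, solve the first column for $(c_1,c_2)^{\mathrm{T}}$ using $B_k \in \GL(2,\mathbb{Z})$ and $\gamma^{-1}(1,0)^{\mathrm{T}} = (d,-c)^{\mathrm{T}}$, and substitute into the second column. Your packaging of the last step via $B_k^{-1} = \det(B_k)\operatorname{adj}(B_k)$ and the rotation $\begin{pmatrix} 0 & 1 \\ -1 & 0 \end{pmatrix}$ is just a reorganization of the identical computation the paper performs explicitly, and the mutatis mutandis treatment of \eqref{eq: gamma c_{J_l}} matches the paper as well.
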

  
\begin{proof}
  Starting with \eqref{eq: gamma c_{I_k}}, let $\xi=c_1\beta_{k1}'+c_2\beta_{k2}'$.
  By Proposition \ref{prp: B_k} and \eqref{eq: B_k cA}, we have 
\begin{equation}
    \begin{aligned}
        \gamma \mathfrak{B}_k
        \begin{pmatrix}
            \xi & 0 \\
            0 & \overline{\xi}
        \end{pmatrix}
        &=\gamma (c_1A_{k1}+c_2A_{k2})
        \begin{pmatrix}
            \sqrt{D} & -\sqrt{D}\\
            1 & 1
        \end{pmatrix}\\
        &=
        \begin{pmatrix}
            1 & \mu \\
            0 & m \\
        \end{pmatrix}
        \begin{pmatrix}
            \sqrt{D} & -\sqrt{D}\\
            1 & 1
        \end{pmatrix},
    \end{aligned}    
\end{equation}
where $\gamma=\begin{pmatrix} * & * \\ c & d \end{pmatrix}\in\SL(2,\mathbb{Z})$. This yields 
\begin{equation}
    \gamma(c_1A_{k1}+c_2A_{k2})=\gamma
    \begin{pmatrix}
        c_1b_{k11}+c_2b_{k21} & c_1a_{k11}+c_2a_{k21} \\
        c_1b_{k12}+c_2b_{k22} & c_1a_{k12}+c_2a_{k22}
    \end{pmatrix}
    =
    \begin{pmatrix}
        1 & \mu \\
        0 & m \\
    \end{pmatrix}.
\label{eq: 1 0 mu m}\end{equation}
Considering the first column of \eqref{eq: 1 0 mu m}, we obtain
\begin{equation}
    \gamma B_k
    \begin{pmatrix}
        c_1\\
        c_2
    \end{pmatrix}
    =
    \gamma
    \begin{pmatrix}
        c_1b_{k11}+c_2b_{k21} \\
        c_1b_{k12}+c_2b_{k22}
    \end{pmatrix}
    =
    \begin{pmatrix}
        1 \\
        0
    \end{pmatrix},
\end{equation}
which implies
\begin{equation}
    \begin{aligned}
        \begin{pmatrix}
            c_1 \\
            c_2
        \end{pmatrix}
        =B_k^{-1}\gamma^{-1}
        \begin{pmatrix}
            1 \\
            0
        \end{pmatrix}
        &=\frac{1}{\det(B_k)}
        \begin{pmatrix}
            b_{k22} & -b_{k21} \\
            -b_{k12} & b_{k11}
        \end{pmatrix}
        \begin{pmatrix}
            d \\
            -c
        \end{pmatrix}\\
        &=\det(B_k)
        \begin{pmatrix}
            b_{k21} & b_{k22} \\
            -b_{k11} & -b_{k12}
        \end{pmatrix}
        \begin{pmatrix}
            c \\
            d
        \end{pmatrix}
    \end{aligned}    
\end{equation}
as $B_k\in\GL(2,\mathbb{Z})$ and $\gamma^{-1}=\begin{pmatrix} d & * \\ -c & * \end{pmatrix}$. Equating the second column of \eqref{eq: 1 0 mu m}, we conclude
\begin{equation}
    \begin{pmatrix}
        \mu \\
        m
    \end{pmatrix}
    =\gamma
    \begin{pmatrix}
        a_{k11} & a_{k21} \\
        a_{k12} & a_{k22}
    \end{pmatrix}
    \begin{pmatrix}
        c_1 \\
        c_2
    \end{pmatrix}
    =\det(B_k)\gamma
    \begin{pmatrix}
        a_{k11} & a_{k21} \\
        a_{k12} & a_{k22}
    \end{pmatrix}
    \begin{pmatrix}
        b_{k21} & b_{k22} \\
        -b_{k11} & -b_{k12}
    \end{pmatrix}
    \begin{pmatrix}
        c \\
        d
    \end{pmatrix}
\end{equation}
so the corollary holds.

The parametrization \eqref{eq: gamma c_{J_l}} follows by the same reasoning.
\end{proof}

Corollary \ref{cor: gamma c_{I_k}} leads to 
Lemma \ref{lm: invariant} below, which says that the quadratic conditions $m \equiv 0 \pmod n$ and $\mu \equiv \nu \pmod n$ are invariant either under $\Gamma_0(n)$ or $\Gamma_0(\frac{n}{2})$.
By invariant, we mean that if $\mu \bmod m$ satisfies the congruence conditions and corresponds to some $\gamma$ via Propositions \ref{prp: congruence I SL(2,Z)} or \ref{prp: congruence J SL(2,Z)}, then any root $\mu' \bmod {m'}$ corresponding to some $\gamma' \in \Gamma_0(n) \gamma$ (or $\Gamma( \frac{n}{2}) \gamma$, see below) also satisfies the congruence conditions $m' \equiv 0 \pmod n$ and $\mu' \equiv \nu \pmod n$.


\begin{lm}
  Let $D\equiv 1\pmod 4$ and let $\mu \bmod m$, $\nu \bmod n$ satisfy $\mu^2\equiv D \pmod m$, $\nu^2\equiv D\pmod n$.
    \begin{enumerate}[(i)]
        \item Suppose that either $m$ or $\frac{D-\mu^2}{n}$ is odd. Then, the conditions $m\equiv 0\pmod n$ and $\mu\equiv\nu \pmod n$ are invariant under $\Gamma_0(n)$.
        \item Suppose that both $m$ and $\frac{D-\mu^2}{n}$ are even. If either $n$ or $\frac{D-\nu^2}{n}$ is odd, then the conditions $m\equiv 0\pmod n$ and $\mu\equiv\nu \pmod n$ are invariant under $\Gamma_0(n)$. On other hand, if both $n$ and $\frac{D-\nu^2}{n}$ are even, then the quadratic conditions $m\equiv 0\pmod n$ and $\mu\equiv\nu \pmod n$ are invariant under $\Gamma_0(\frac{n}{2})$.
    \end{enumerate}
\label{lm: invariant}\end{lm}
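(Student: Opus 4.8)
The plan is to read the invariance off directly from the explicit parametrizations in Corollary~\ref{cor: gamma c_{I_k}}. Writing $\gamma = \begin{pmatrix} a & b \\ c & d \end{pmatrix}$, both \eqref{eq: gamma c_{I_k}} and \eqref{eq: gamma c_{J_l}} have the shape $\begin{pmatrix} \mu \\ m \end{pmatrix} = \det(B_k)\,\gamma\, M_k \begin{pmatrix} c \\ d \end{pmatrix}$ (with $(\tfrac{\mu-1}{2},\tfrac m2)$, $D_l$ and the analogous matrix in case (ii)), where $M_k$ is a fixed integer matrix assembled from the $a_{kij}, b_{kij}$. Reading the two rows separately, I would record that $m = Q_k(c,d)$ is an integer-valued quadratic form in the bottom row of $\gamma$, that $\mu = L_k((a,b),(c,d))$ is an integer bilinear form in the two rows, and that these satisfy the compatibility $L_k((c,d),(c,d)) = Q_k(c,d)$.

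Next I would establish the transformation law under left multiplication. Fix $\gamma_0 = \begin{pmatrix} a_0 & b_0 \\ c_0 & d_0 \end{pmatrix} \in \Gamma_0(\mathfrak{n})$, so $\mathfrak{n} \mid c_0$ and hence $a_0 d_0 \equiv 1 \pmod{\mathfrak{n}}$. The bottom row of $\gamma_0\gamma$ is $(c_0 a + d_0 c,\, c_0 b + d_0 d) \equiv d_0(c,d) \pmod{\mathfrak{n}}$ and the top row is $a_0(a,b) + b_0(c,d)$; since $Q_k, L_k$ have integer coefficients, homogeneity and bilinearity give
\[
m' \equiv d_0^2\, m , \qquad \mu' \equiv d_0 a_0\, \mu + d_0 b_0\, m \pmod{\mathfrak{n}}
\]
for the root $\mu' \bmod m'$ attached to $\gamma_0\gamma$. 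In particular, whenever $m \equiv 0 \pmod{\mathfrak{n}}$ one gets $m' \equiv 0$ and $\mu' \equiv a_0 d_0\,\mu \equiv \mu \pmod{\mathfrak{n}}$, so the pair of conditions ``$m \equiv 0$ and $\mu$ fixed, modulo $\mathfrak{n}$'' is preserved by $\Gamma_0(\mathfrak{n})$.

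It then remains to match $\mathfrak{n}$ to the conditions $m \equiv 0 \pmod n$ and $\mu \equiv \nu \pmod n$ in each case. In case (i) the forms output $(\mu,m)$ directly, the two conditions are already congruences modulo $n$, and $\mathfrak{n}=n$ gives invariance under $\Gamma_0(n)$. In case (ii) the forms output $(\tfrac{\mu-1}{2},\tfrac m2)$; here $m$ is even and $\mu$ is odd, and I would rewrite $m \equiv 0 \pmod n$, $\mu \equiv \nu \pmod n$ as congruences for these halved quantities. For $n$ odd, $2$ is invertible modulo $n$ and both become congruences modulo $n$, so $\mathfrak{n}=n$ and the group is $\Gamma_0(n)$. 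For $n$ even (which forces $\nu$ odd), cancelling the common factor $2$ turns them into congruences modulo $n/2$, so $\mathfrak{n}=n/2$ and the group is $\Gamma_0(n/2)$.

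The one point needing genuine care is reconciling this with the two alternatives in part (ii): the $\Gamma_0(n/2)$ argument seems to apply to every even $n$, yet the statement assigns $\Gamma_0(n)$ to the even-$n$ sub-case where $\frac{D-\nu^2}{n}$ is odd. I expect this to be the main obstacle, and I would resolve it with a $2$-adic valuation count showing that sub-case is vacuous: $\frac{D-\nu^2}{n}$ odd forces $v_2(D-\nu^2) = v_2(n) =: e$, while $\mu \equiv \nu \pmod n$ gives $\mu^2 \equiv \nu^2 \pmod{2^{e+1}}$ and hence $v_2(D-\mu^2)=e$, whereas $m \equiv 0 \pmod n$ forces $v_2(m) \geq e$, contradicting the case-(ii) requirement $v_2(\tfrac{D-\mu^2}{m}) \geq 1$. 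Thus no case-(ii) root meets the congruence conditions there, the $\Gamma_0(n)$ claim holds vacuously, and the substantive content of part (ii) is exactly the $\Gamma_0(n/2)$ statement in the remaining, non-empty sub-case.
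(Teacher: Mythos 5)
Your proposal is correct and follows essentially the same route as the paper: both extract the transformation law $m' \equiv d_0^2 m$, $\mu' \equiv a_0 d_0 \mu + b_0 d_0 m \pmod{\mathfrak{n}}$ from the explicit parametrization in Corollary~\ref{cor: gamma c_{I_k}} (with the halved quantities and modulus $n$ or $\tfrac{n}{2}$ in case (ii)), and both dispose of the sub-case ``$n$ even, $\tfrac{D-\nu^2}{n}$ odd'' by showing it is incompatible with the case-(ii) parity conditions, which is exactly the paper's remark that $\nu^2 - D \equiv \mu^2 - D \equiv 0 \pmod{2n}$ forces $n$ odd there. Your $2$-adic valuation phrasing of that vacuousness step and the abstract quadratic/bilinear-form bookkeeping are only cosmetic differences from the paper's explicit matrix computation.
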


\begin{proof}
  (i) First, we assume that either $m$ or $\frac{D-\mu^2}{2}$ is odd. By Corollary \ref{cor: gamma c_{I_k}}, we can write
\begin{equation}
    \begin{pmatrix}
        \mu \\
        m
    \end{pmatrix}
    =
    \begin{pmatrix}
        a & b \\
        c & d 
    \end{pmatrix}
    \begin{pmatrix}
        A & B \\
        C & D
    \end{pmatrix}
    \begin{pmatrix}
        c \\
        d
    \end{pmatrix}
    \quad\text{for some}\quad \gamma=\begin{pmatrix}
        a & b \\
        c & d 
    \end{pmatrix}\in\SL(2,\mathbb{Z}).
\end{equation}
Given $\gamma_0=\begin{pmatrix}
        a' & b' \\
        nc' & d' 
    \end{pmatrix}\in\Gamma_0(n)$, we have $\gamma_0\gamma=\begin{pmatrix}
        a'a+b'c & a'b+b'd \\
        nc'a+d'c & nc'b+d'd 
    \end{pmatrix}$ and 
\begin{equation}
    \begin{pmatrix}
        \mu' \\
        m'
    \end{pmatrix}
    =
    \begin{pmatrix}
        a'a+b'c & a'b+b'd \\
        nc'a+d'c & nc'b+d'd 
    \end{pmatrix}
    \begin{pmatrix}
        A & B \\
        C & D
    \end{pmatrix}
    \begin{pmatrix}
        nc'a+d'c \\
        nc'b+d'd 
    \end{pmatrix}
\end{equation}
so $m' \equiv d'^2m \equiv 0 \pmod n$ and $\mu'\equiv (1+nc'b')\mu+b'd'm \equiv \mu \pmod n$.

\bigskip

(iia) Now, we assume that both $m$ and $\frac{D-\mu^2}{2}$ are even. By Corollary \ref{cor: gamma c_{I_k}}, we can write
\begin{equation}
    \begin{pmatrix}
        \frac{\mu-1}{2} \\
        \frac{m}{2}
    \end{pmatrix}
    =
    \begin{pmatrix}
        a & b \\
        c & d 
    \end{pmatrix}
    \begin{pmatrix}
        A & B \\
        C & D
    \end{pmatrix}
    \begin{pmatrix}
        c \\
        d
    \end{pmatrix}
    \quad\text{for some}\quad \gamma=\begin{pmatrix}
        a & b \\
        c & d 
    \end{pmatrix}\in\SL(2,\mathbb{Z}).
\end{equation}
Suppose that either $n$ or $\frac{D-\nu^2}{n}$ is odd, and we note that if also $m\equiv 0 \pmod n$ and $\mu \equiv \nu \pmod n$, then in fact $n$ is odd since $\nu^2 - D \equiv \mu^2 - D \equiv 0 \pmod {2n}$.
Given $\gamma_0=\begin{pmatrix}
        a' & b' \\
        nc' & d' 
    \end{pmatrix}\in\Gamma_0(n)$, we have
\begin{equation}
    \begin{pmatrix}
        \frac{\mu'-1}{2} \\
        \frac{m'}{2}
    \end{pmatrix}
    =
    \begin{pmatrix}
        a'a+b'c & a'b+b'd \\
        nc'a+d'c & nc'b+d'd 
    \end{pmatrix}
    \begin{pmatrix}
        A & B \\
        C & D
    \end{pmatrix}
    \begin{pmatrix}
        nc'a+d'c \\
        nc'b+d'd 
    \end{pmatrix}
\end{equation}
so $\frac{m'}{2} \equiv d'^2\frac{m}{2} \equiv 0 \pmod n$ and $\frac{\mu'-1}{2}\equiv (1+nc'b')\frac{\mu-1}{2}+b'd'\frac{m}{2} \equiv \frac{\mu-1}{2} \pmod n$ since $m\equiv 0\pmod {2n}$.
This implies $m'\equiv m\equiv 0\pmod n$ and $\mu'\equiv\mu\equiv\nu\pmod n$.

\bigskip

(iib) Suppose that both $n$ and $\frac{D-\nu^2}{n}$ are even. Given $\gamma_0=\begin{pmatrix}
        a' & b' \\
        \frac{n}{2}c' & d' 
    \end{pmatrix}\in\Gamma_0(\frac{n}{2})$, we have
\begin{equation}
    \begin{pmatrix}
        \frac{\mu'-1}{2} \\
        \frac{m'}{2}
    \end{pmatrix}
    =
    \begin{pmatrix}
        a'a+b'c & a'b+b'd \\
        \frac{n}{2}c'a+d'c & \frac{n}{2}c'b+d'd 
    \end{pmatrix}
    \begin{pmatrix}
        A & B \\
        C & D
    \end{pmatrix}
    \begin{pmatrix}
        \frac{n}{2}c'a+d'c \\
        \frac{n}{2}c'b+d'd 
    \end{pmatrix}.
\end{equation}    
Multiplying both sides by the scalar $2$, the equality becomes 
\begin{equation}
    \begin{pmatrix}
        \mu'-1 \\
        m'
    \end{pmatrix}
    =
    \begin{pmatrix}
        a'a+b'c & a'b+b'd \\
        \frac{n}{2}c'a+d'c & \frac{n}{2}c'b+d'd 
    \end{pmatrix}
    \begin{pmatrix}
        A & B \\
        C & D
    \end{pmatrix}
    \begin{pmatrix}
        nc'a+2d'c \\
        nc'b+2d'd 
    \end{pmatrix}
\end{equation}
so $m' \equiv d'^2m \equiv 0\pmod n$ and $\mu'\equiv \mu+nc'b'\frac{\mu-1}{2}+b'd'm\equiv\mu \pmod n$.
\end{proof}

We now prove the existence of elements $\gamma_{1;k}$ and $\gamma_{2;l}$ (depending on $n$ and $\nu$) of $\Gamma$ referred to at the beginning of this section.
Specifically, these $\gamma_{1;k}$ and $\gamma_{2;l}$ should be so that the tops $z_{\gamma_{1;k} \bm{c}_{1;k}}$ and $z_{\gamma_{2;l} \bm{c}_{2;l}}$ have the form $\frac{\mu}{m} + \mathrm{i}\frac{\sqrt{D}}{m}$ with $m \equiv 0 \pmod n$ and $\mu \equiv \nu \pmod n$.
Lemma \ref{lm: invariant} then implies that the top of any positively oriented geodesic in the same $\Gamma_0(n)$-orbit or $\Gamma_0(\frac{n}{2})$-orbit as $\gamma_{1;k}$ or $\gamma_{2;l}$ also satisfies these congruence conditions.
We note that it is enough to show the existence of ideal classes having the corresponding properties. 

\begin{lm}
  Let $D\equiv 1\pmod 4$ and $\nu \bmod n$ satisfy $\nu^2\equiv D\pmod n$. For any $1\leq k \leq h_1^+(D)$, there exists $I_{k,\nu,n}$, equivalent to $I_k$ in the narrow class group of $\mathbb{Z}[\sqrt{D}]$, such that $I_{k,\nu,n}$ has a basis given by $\begin{pmatrix} 1 & \mu \\ 0 & m \end{pmatrix}\begin{pmatrix} \sqrt{D} \\ 1 \end{pmatrix}$ where $m\equiv 0 \pmod n$ and $\mu\equiv\nu \pmod n$ with either $m$ or $\frac{D-\mu^2}{m}$ odd.

  Similarly, for any $1\leq l \leq h_2^+(D)$, there exists $J_{l,\nu,n}$, equivalent to $J_l$ in the narrow class group of $\mathbb{Z}[\frac{1+\sqrt{D}}{2}]$, such that $J_{l,\nu,n}$ has a basis given by $\begin{pmatrix} 1 & \frac{\mu-1}{2} \\ 0 & \frac{m}{2} \end{pmatrix}\begin{pmatrix} \frac{1+\sqrt{D}}{2} \\ 1 \end{pmatrix}$ where $m\equiv 0 \pmod n$ and $\mu\equiv\nu \pmod n$ with both $m$ and $\frac{D-\mu^2}{m}$ even.
\label{lm: I_k,nu,n}\end{lm}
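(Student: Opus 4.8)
The plan is to convert the two congruence conditions into a single ideal-containment condition and then to realize that condition inside the prescribed narrow class by a class-group construction, the one delicate point being the possible non-invertibility of the reference ideal at the prime $2$.

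First I would record a containment criterion. Writing $\mathfrak{n}=(n,\nu+\sqrt{D})\subseteq\mathbb{Z}[\sqrt{D}]$, the relation $\nu^2\equiv D\pmod n$ shows that $\{n,\nu+\sqrt{D}\}$ is a $\mathbb{Z}$-basis of $\mathfrak{n}$, and hence $a+b\sqrt{D}\in\mathfrak{n}$ if and only if $a\equiv\nu b\pmod n$. Applying this to the two generators $m$ and $\mu+\sqrt{D}$ of the primitive ideal $I=(m,\mu+\sqrt{D})$ attached to a root $\mu\bmod m$ by Proposition \ref{prp:general m not equiv 2 mod 4}, I find that $I\subseteq\mathfrak{n}$ is equivalent to $m\equiv 0\pmod n$ and $\mu\equiv\nu\pmod n$ together. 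Thus Part~1 reduces to finding a primitive invertible ideal $I_{k,\nu,n}$ in the narrow class of $I_k$ with $I_{k,\nu,n}\subseteq\mathfrak{n}$: Proposition \ref{prp:general m not equiv 2 mod 4} then supplies the asserted basis, and Lemma \ref{lm:general invertible ideal}, which states that $I$ is invertible if and only if either $m$ or $\frac{D-\mu^2}{m}$ is odd, gives the parity statement. Part~2 is treated identically, with $\mathfrak{n}$ replaced by the corresponding ideal of the maximal order $\mathbb{Z}[\tfrac{1+\sqrt{D}}{2}]$ and Proposition \ref{prp:general m equiv 2 mod 4} in place of Proposition \ref{prp:general m not equiv 2 mod 4}; there the parity ``both $m$ and $\frac{D-\mu^2}{m}$ even'' is automatic from primitivity of the $\mathbb{Z}[\tfrac{1+\sqrt{D}}{2}]$-ideal.

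Assuming for the moment that $\mathfrak{n}$ is invertible, I would choose an invertible, primitive ideal $\mathfrak{a}$ in the narrow class $[I_k][\mathfrak{n}]^{-1}$ that is coprime to $N(\mathfrak{n})$ --- such a representative exists by the standard fact that each narrow class is represented by invertible ideals avoiding any prescribed finite set of primes --- and set $I_{k,\nu,n}=\mathfrak{n}\mathfrak{a}$. This is a product of invertible ideals, hence invertible; it lies in the narrow class $[\mathfrak{n}][\mathfrak{a}]=[I_k]$; and it is contained in $\mathfrak{n}$. Its primitivity follows from a routine local computation at each prime dividing $N(\mathfrak{n}\mathfrak{a})$, using $\gcd(N(\mathfrak{a}),N(\mathfrak{n}))=1$ together with the primitivity of $\mathfrak{a}$ and of $\mathfrak{n}$. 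For Part~2 this is the whole argument, since the maximal order is a Dedekind domain in which every ideal is invertible.

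The main obstacle is the remaining case of Part~1 in which $\mathfrak{n}$ is not invertible, i.e.\ both $n$ and $\frac{D-\nu^2}{n}$ are even, for then the factorization $I_{k,\nu,n}=\mathfrak{n}\mathfrak{a}$ is not available. I would resolve this by descending to a deeper invertible ideal carrying the same congruence information. Set $j=v_2(D-\nu^2)-v_2(n)$, which is $\ge 1$ precisely because $\frac{D-\nu^2}{n}$ is even; one checks that $\nu^2\equiv D\pmod{2^{j}n}$, so that $\mathfrak{n}'=(2^{j}n,\nu+\sqrt{D})$ is a primitive ideal, that $\mathfrak{n}'\subseteq\mathfrak{n}$, and that $\mathfrak{n}'$ is invertible by Lemma \ref{lm:general invertible ideal} since $\frac{D-\nu^2}{2^{j}n}$ is odd by the choice of $j$. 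Because $\mathfrak{n}'\subseteq\mathfrak{n}$, any root attached to an ideal contained in $\mathfrak{n}'$ still satisfies the two congruences modulo $n$, so I may run the construction of the previous paragraph verbatim with $\mathfrak{n}'$ in place of $\mathfrak{n}$. This completes the plan; the expected friction lies entirely in this $2$-adic descent and in the bookkeeping needed to verify primitivity of the products.
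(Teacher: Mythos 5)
Your Part 1 is, in substance, the paper's own proof: the paper also multiplies the congruence ideal $(n,\nu+\sqrt{D})$ --- or, when both $n$ and $\frac{D-\nu^2}{n}$ are even, the invertible ideal $(2^s n,\nu+\sqrt{D})$ with $\frac{D-\nu^2}{2^s n}$ odd, which is exactly your $\mathfrak{n}'$ --- by a primitive invertible representative of the class $[I_k][\mathfrak{n}]^{-1}$ whose norm is coprime to the modulus, and it verifies primitivity of the product by the explicit Bezout/Chinese-remainder computation \eqref{eq: I_0I_nu,n} that you defer to a ``routine local computation''. Up to this packaging (your containment criterion versus the paper's explicit product), Part 1 is correct and identical in approach.

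The gap is in Part 2, which you claim is ``the whole argument'' once invertibility comes for free in the Dedekind domain $\mathbb{Z}[\frac{1+\sqrt{D}}{2}]$. The phrase ``the corresponding ideal of the maximal order'' is the problem: by Proposition \ref{prp:general m equiv 2 mod 4}, every primitive ideal of $\mathbb{Z}[\frac{1+\sqrt{D}}{2}]$ corresponds to a root whose modulus $m$ is \emph{even} (indeed with $\frac{D-\mu^2}{m}$ even as well), so when $n$ is odd there is simply no maximal-order ideal corresponding to $\nu \bmod n$, and your congruence ideal is undefined in precisely the most basic cases (e.g.\ $D=17$, $\nu=11$, $n=13$). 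Nor can you fall back on the extension ideal $(n,\nu+\sqrt{D})\mathbb{Z}[\frac{1+\sqrt{D}}{2}]$ in the remaining case: when both $n$ and $\frac{D-\nu^2}{n}$ are even this extension equals $2\cdot\frac{1}{2}(n,\nu+\sqrt{D})$ and is not primitive (for $D=17$, $n=2$, $\nu=1$ it is $2\mathbb{Z}[\frac{1+\sqrt{17}}{2}]$, which contains no primitive ideal at all), so containment in it does not characterize the congruences. Consequently Part 2 needs its own case analysis, mirroring the $2$-adic descent you performed in Part 1: when both $n$ and $\frac{D-\nu^2}{n}$ are even, the correct congruence ideal is $\frac{1}{2}(n,\nu+\sqrt{D})$; when $n$ is odd one must first replace $\nu$ by $\nu+n$ if necessary so that $\nu$ is odd, note that then $\nu^2\equiv D \pmod{2n}$ with $\frac{D-\nu^2}{2n}$ automatically even, and use $\frac{1}{2}(2n,\nu+\sqrt{D})$ (one also needs the observation, made in the paper, that the case ``$n$ even, $\frac{D-\nu^2}{n}$ odd'' is incompatible with the existence of the roots being parametrized). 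This is exactly what the second half of the paper's proof does, via the computation \eqref{eq: J_0J_nu,n}; your plan can be completed along these lines, but as written the friction you located ``entirely'' in Part 1 is equally present in Part 2.
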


\begin{proof}
  We begin with the ideal $I_k$ of $\mathbb{Z}[\sqrt D]$, and in addition, suppose that either $n$ or $\frac{D-\nu^2}{n}$ is odd.
  By Proposition \ref{prp:general m not equiv 2 mod 4}, $I = (n,\nu+\sqrt{D})$ is an invertible ideal of $\mathbb{Z}[\sqrt{D}]$.
  Let $I_0=(m_0,\mu_0+\sqrt{D})$ with $\mu_0^2\equiv D\pmod {m_0}$ be an invertible ideal of $\mathbb{Z}[\sqrt{D}]$, equivalent to $I_kI^{-1}$ in the narrow class group of $\mathbb{Z}[\sqrt{D}]$, such that $I_0$ is not divisible by rational integers greater than $1$ and $\gcd(m_0,n)=1$ (the existence of such an ideal $I_0$ can be seen, for example, by observing that the possible norms $m_0$ in the class of $I_k I^{-1}$ are given by the values of a primitive binary quadratic form at coprime integers, see Corollary \ref{cor: gamma c_{I_k}}).
  As $m_0$ and $n$ are coprime, there exist integers $\overline{m}_0$ and $\overline{n}$ such that $m_0\overline{m}_0+n\overline{n}=1$.
  We have
\begin{equation}
\begin{aligned}
    I_0I&=(m_0n,m_0\nu+m_0\sqrt{D},n\mu_0+n\sqrt{D},\mu_0\nu+D+(\mu_0+\nu)\sqrt{D}) \\
    &=(m_0n,m_0n(\nu-\mu_0),m_0\overline{m}_0\nu+n\overline{n}\mu_0+\sqrt{D},\mu_0\nu+D+(\mu_0+\nu)\sqrt{D})\\
    &=(m_0n,m_0\overline{m}_0\nu+n\overline{n}\mu_0+\sqrt{D},\mu_0\nu+D-(\mu_0+\nu)(m_0\overline{m}_0\nu+n\overline{n}\mu_0))\\
    &=(m_0n,m_0\overline{m}_0\nu+n\overline{n}\mu_0+\sqrt{D}),
\end{aligned}
\label{eq: I_0I_nu,n}\end{equation}
because $\mu_0\nu+D-(\mu_0+\nu)(m_0\overline{m}_0\nu+n\overline{n}\mu_0)\equiv 0 \pmod {m_0n}$.
Set $I_{k,\nu,n}=I_0I=(m,\mu+\sqrt{D})$, where $m=m_0n$ and $\mu\equiv m_0\overline{m}_0\nu+n\overline{n}\mu_0\pmod m$, i.e. $\mu \bmod m$ is obtained from $\nu \bmod n$ and $\mu_0 \bmod m_0 $ by the Chinese remainder theorem.

Suppose now that both $n$ and $\frac{D-\nu^2}{n}$ are even. Let $s\geq 1$ be the exponent such that $\frac{D-\nu^2}{2^sn}$ is odd.
By Proposition \ref{prp:general m not equiv 2 mod 4}, $I'=(n', \nu+\sqrt{D})$ is an invertible ideal of $\mathbb{Z}[\sqrt{D}]$, where $n'=2^sn$.
Let $I_0=(m_0,\mu_0+\sqrt{D})$ with $\mu_0^2\equiv D \pmod {m_0}$ be an invertible ideal of $\mathbb{Z}[\sqrt{D}]$, equivalent to $I_k(I')^{-1}$ in the narrow class group of $\mathbb{Z}[\sqrt{D}]$, such that $I_0$ is not divisible by rational integers greater than $1$ and $\gcd(m_0,n')=1$. As $m_0$ and $n'$ are coprime, there exist integers $\overline{m}_0$ and $\overline{n}'$ such that $m_0\overline{m}_0+n'\overline{n}'=1$.
Following the similar calculation in \eqref{eq: I_0I_nu,n}, we set $I_{k,\nu,n}=I_0I'=(m,\mu+\sqrt{D})$, where $m=2^sm_0n$ and $\mu\equiv m_0\overline{m}_0\nu+n'\overline{n}'\mu_0\pmod m$. 


We now turn to the ideal $J_l$ in $\mathbb{Z}[\tfrac{1 + \sqrt{D}}{2}]$, and we first suppose that both $n$ and $\frac{D-\nu^2}{n}$ are even.
By Proposition \ref{prp:general m equiv 2 mod 4}, $J=(\frac{n}{2},\frac{\nu-1}{2}+\frac{1+\sqrt{D}}{2})=\frac{1}{2}(n,\nu +\sqrt{D})$ is an ideal of $\mathbb{Z}[\frac{1+\sqrt{D}}{2}]$.
Let $J_0=\frac{1}{2}(m_0,\mu_0+\sqrt{D})$, where $\mu_0^2\equiv D\pmod {m_0}$ and $m_0\equiv 2\pmod 4$, be an ideal of $\mathbb{Z}[\frac{1+\sqrt{D}}{2}]$, equivalent to $J_lJ^{-1}$ in the narrow class group of $\mathbb{Z}[\frac{1+\sqrt{D}}{2}]$, such that $J_0$ is not divisible by rational integers greater than 1 and $\gcd(\frac{m_0}{2},\frac{n}{2})=1$ (the possible norms $\frac{m_0}{2}$ are given by values of a primitive binary quadratic form at coprime integers in this case as well).
As $\frac{m_0}{2}$ and $\frac{n}{2}$ are coprime, there exist integers $\overline{m}_0$ and $\overline{n}$ such that $\frac{1}{2}(m_0\overline{m}_0+n\overline{n})=1$. We have
\begin{equation}
\begin{aligned}
    J_0J&=\frac{1}{4}(m_0n,m_0\nu+m_0\sqrt{D},n\mu_0+n\sqrt{D},\mu_0\nu+D+(\mu_0+\nu)\sqrt{D}) \\
    &=\frac{1}{4}(m_0n,m_0n(\nu-\mu_0)/2,m_0\overline{m}_0\nu+n\overline{n}\mu_0+2\sqrt{D},\mu_0\nu+D+(\mu_0+\nu)\sqrt{D})\\
    &=\frac{1}{4}(m_0n,m_0\overline{m}_0\nu+n\overline{n}\mu_0+2\sqrt{D},\mu_0\nu+D+(\mu_0+\nu)\sqrt{D}) \\
    &=\frac{1}{2}(m_0n/2,(m_0\overline{m}_0\nu+n\overline{n}\mu_0)/2+\sqrt{D}),
\end{aligned}
\label{eq: J_0J_nu,n}\end{equation}
because $D + \mu_0\nu-(\mu_0+\nu)(m_0\overline{m}_0\nu+n\overline{n}\mu_0)/2 \equiv 0 \pmod {m_0n}$.
Set $J_{l,\nu,n}=J_0J=\frac{1}{2}(m,\mu+\sqrt{D})$, where $m=\frac{m_0n}{2}$ and $\mu\equiv (m_0\overline{m}_0\nu+n\overline{n}\mu_0)/2 \pmod m$.

Suppose now that either $n$ or $\frac{D-\nu^2}{n}$ is odd.
As before, since $m\equiv 0 \pmod n$, $\mu \equiv \nu \pmod n$ and both $m$ and $\frac{D-\mu^2}{m}$ are even, we have $n$ is odd.
Furthermore, we may assume that $\nu^2 \equiv D \pmod {2n}$ by possibly replacing $\nu$ with $\nu + n$. 
Then necessarily $\frac{D - \nu^2}{2n}$ is even, and by Proposition
\ref{prp:general m equiv 2 mod 4}, $J'=\frac{1}{2}(2n,\nu+\sqrt{D})$ is an ideal of $\mathbb{Z}[\frac{1+\sqrt{D}}{2}]$.
Let $J_0=\frac{1}{2}(m_0,\mu_0+\sqrt{D})$ with $\mu_0^2\equiv D\pmod {m_0}$ be an ideal of $\mathbb{Z}[\frac{1+\sqrt{D}}{2}]$, equivalent to $J_l(J')^{-1}$ in the narrow class group of $\mathbb{Z}[\frac{1+\sqrt{D}}{2}]$, such that $J_0$ is not divisible by rational integers greater than 1 and $\gcd(\frac{m_0}{2},n)=1$.
Following the similar calculation in \eqref{eq: J_0J_nu,n}, we set $J_{l,\nu,n}=J_0J'=\frac{1}{2}(m,\mu+\sqrt{D})$, where $m=m_0n$ and $\mu\equiv m_0\overline{m}_0\nu/2+n\overline{n}\mu_0 \pmod m$ with $m_0\overline{m}_0/2+n\overline{n}=1$.
\end{proof}





The next two lemmas show that if two tops $\frac{\mu_j}{m_j} + \mathrm{i} \frac{\sqrt{D}}{m_j}$ are in the same $\SL(2, \mathbb{Z})$-orbit and satisfy the congruence conditions $m_1 \equiv m_2 \equiv 0 \pmod n$ and $\mu_1 \equiv \mu_2 \equiv \nu \pmod n$, then in fact they are in the same $\Gamma_0(n)$-orbit or $\Gamma_0(\frac{n}{2})$-orbit. This completes the argument outlined at the beginning of this section.

\begin{lm}
    Let $D\equiv 1 \pmod 4$ and $m_1, m_2\in\mathbb{N}$. Let $\mu_1 \bmod {m_1}$ satisfy $\mu_1^2\equiv D \pmod {m_1}$ with either $m_1$ or $\frac{D-\mu_1^2}{m_1}$ odd, and let $\mu_2 \bmod {m_2}$ satisfy $\mu_2^2\equiv D \pmod {m_2}$ with either $m_2$ or $\frac{D-\mu_2^2}{m_2}$ odd. Suppose that there is a root $\nu \bmod n$ of $\nu^2\equiv D \pmod n$, and suppose that $m_1\equiv m_2 \equiv 0 \pmod n$ and $\mu_1\equiv \mu_2\equiv \nu \pmod n$. If there exists $\gamma\in\SL(2,\mathbb{Z})$ such that
    \begin{equation}
        \begin{pmatrix}
            1 & \mu_1 \\
            0 & m_1
        \end{pmatrix}
        \begin{pmatrix}
            \sqrt{D} & -\sqrt{D}\\
            1 & 1
        \end{pmatrix}
        \begin{pmatrix}
            \xi & 0 \\
            0 & \overline{\xi}
        \end{pmatrix}
        =\gamma
        \begin{pmatrix}
            1 & \mu_2 \\
            0 & m_2
        \end{pmatrix}
        \begin{pmatrix}
            \sqrt{D} & -\sqrt{D}\\
            1 & 1
        \end{pmatrix}  
    \label{eq: congruence subgroup I}\end{equation}
    for some totally positive $\xi$, then $\gamma\in\Gamma_0(n)$.
\label{lm: congruence subgroup I}\end{lm}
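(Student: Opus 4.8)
The plan is to convert the matrix identity \eqref{eq: congruence subgroup I} into purely integral linear relations among the entries of $\gamma$ and then read off the required divisibility. Write $\gamma=\begin{pmatrix} a & b \\ c & d\end{pmatrix}$ and, for $i=1,2$, set $M_i=\begin{pmatrix} 1 & \mu_i \\ 0 & m_i\end{pmatrix}\begin{pmatrix}\sqrt{D} & -\sqrt{D}\\ 1 & 1\end{pmatrix}$, so that \eqref{eq: congruence subgroup I} reads $\gamma M_2=M_1\operatorname{diag}(\xi,\overline{\xi})$. Each $M_i$ is the basis matrix of the invertible ideal $I_i=(m_i,\mu_i+\sqrt{D})$ under the embedding $\mathbb{Q}(\sqrt{D})\to\mathbb{R}^2$, and $\det M_i=2m_i\sqrt{D}\neq 0$, so $M_i$ is invertible. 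The goal is exactly to show $n\mid c$.

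First I would introduce the integer matrix $T_i=\begin{pmatrix}\mu_i & \frac{D-\mu_i^2}{m_i}\\ m_i & -\mu_i\end{pmatrix}\in\mathrm{M}_2(\mathbb{Z})$, which represents multiplication by $\sqrt{D}$ on $I_i$ in the given basis; concretely $M_i\operatorname{diag}(\sqrt{D},-\sqrt{D})=T_iM_i$, as already computed in the proof of Proposition \ref{prp:general m not equiv 2 mod 4}. Since $\operatorname{diag}(\xi,\overline{\xi})$ and $\operatorname{diag}(\sqrt{D},-\sqrt{D})$ are both diagonal, they commute, and combining this with $\gamma M_2=M_1\operatorname{diag}(\xi,\overline{\xi})$ and cancelling the invertible $M_2$ yields the clean integral intertwining relation $T_1\gamma=\gamma T_2$. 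This is the conceptual heart of the argument: it eliminates the transcendental datum $\xi$ in favour of $\mathbb{Z}$-linear equations in $a,b,c,d,\mu_i,m_i$.

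Next I would write out the scalar entries of $T_1\gamma=\gamma T_2$ and reduce them modulo $n$, using $m_1\equiv m_2\equiv 0$, $\mu_1\equiv\mu_2\equiv\nu\pmod n$, and $\nu^2\equiv D\pmod n$. The $(1,1)$, $(2,2)$ and $(2,1)$ entries give, respectively, $c\,\tfrac{D-\mu_1^2}{m_1}\equiv 0$, $c\,\tfrac{D-\mu_2^2}{m_2}\equiv 0$, and $2c\nu\equiv 0\pmod n$. It then suffices to prove $p^{v_p(n)}\mid c$ for each prime $p$, which I would do locally prime-by-prime.

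The hard part will be the arithmetic at the bad primes. The prime $p=2$ is in fact easy: if $n$ is even then $m_1$ is even, so invertibility of $I_1$ forces $\tfrac{D-\mu_1^2}{m_1}$ to be odd, and the first congruence then inverts this odd factor modulo $2^{v_2(n)}$ to give $2^{v_2(n)}\mid c$ at once. The genuinely delicate case is an odd prime $p\mid D$, where $\nu\equiv 0$ makes the first two congruences degenerate; here I would argue by contradiction, supposing $v_p(c)<v_p(n)=e$. The congruences $c\,\tfrac{D-\mu_2^2}{m_2}\equiv 0$ and $2c\nu\equiv 0\pmod{p^e}$ then force $p\mid\tfrac{D-\mu_2^2}{m_2}$ and $p\mid\nu$, while $p^e\mid m_2$ gives $p\mid m_2$; but $p\mid\nu$ together with $\mu_2\equiv\nu\pmod{p^e}$ yields $p\mid\mu_2$, contradicting the coprimality $\gcd\!\big(m_2,2\mu_2,\tfrac{D-\mu_2^2}{m_2}\big)=1$ that invertibility of $I_2$ guarantees (established in the proof of Lemma \ref{lm:general invertible ideal}). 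The same contradiction in fact covers every odd $p$ uniformly. Collecting these local statements over all $p$ gives $n\mid c$, that is $\gamma\in\Gamma_0(n)$.
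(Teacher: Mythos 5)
Your proof is correct, and it takes a genuinely different route from the paper's. The paper argues ideal-theoretically: it reads \eqref{eq: congruence subgroup I} as $\xi I_1 = I_2$ for the ideals $I_j=(m_j,\mu_j+\sqrt D)$, introduces an auxiliary ideal attached to the root $\nu\bmod n$ (the invertible ideal $(n,\nu+\sqrt{D})$ of $\mathbb{Z}[\sqrt{D}]$ when $n$ or $\tfrac{D-\nu^2}{n}$ is odd, and otherwise the ideals $\tfrac14 I_j(2,1+\sqrt{D})$ of the maximal order $\mathbb{Z}[\tfrac{1+\sqrt{D}}{2}]$), uses the congruence conditions to show the $I_j$ lie inside that auxiliary ideal, and finally extracts $n\mid c$ from the integrality of a matrix whose lower-left entry is $\tfrac{c}{n}$; this forces a two-case analysis on the parity of $n$ and $\tfrac{D-\nu^2}{n}$, with the second case requiring the extra observation $m_j\equiv 0\pmod{2n}$ and a detour through the maximal order. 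You instead eliminate $\xi$ at the outset via the intertwining relation $T_1\gamma=\gamma T_2$, where $T_i$ is the integral matrix of multiplication by $\sqrt{D}$ on $I_i$; this step is valid because $\operatorname{diag}(\xi,\overline{\xi})$ commutes with $\operatorname{diag}(\sqrt{D},-\sqrt{D})$ and $\det M_2=2m_2\sqrt{D}\neq 0$ permits cancellation. Your three congruences $c\,\tfrac{D-\mu_1^2}{m_1}\equiv c\,\tfrac{D-\mu_2^2}{m_2}\equiv 2c\nu\equiv 0\pmod n$ do follow from the $(1,1)$, $(2,2)$, $(2,1)$ entries, and the local analysis is sound: at $p=2$ the parity hypothesis makes $\tfrac{D-\mu_1^2}{m_1}$ a unit modulo $2^{v_2(n)}$, and at odd $p$ the assumption $p^{v_p(n)}\nmid c$ forces $p$ to divide $m_2$, $\mu_2$ and $\tfrac{D-\mu_2^2}{m_2}$ simultaneously, whence $p^2\mid D$ --- contradicting square-freeness of $D$, which, though omitted from the lemma's statement, is a standing hypothesis of the paper and is equally needed in its own proof (via Lemma \ref{lm:general invertible ideal}). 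What your approach buys: a single uniform argument with no case split on $n$, no auxiliary ideals, and a transparent accounting of exactly where each hypothesis enters (parity at $2$, square-freeness at odd primes). What the paper's approach buys: it stays inside the ideal-correspondence framework on which the whole section runs, and essentially the same template immediately yields the companion Lemma \ref{lm: congruence subgroup J} for ideals of $\mathbb{Z}[\tfrac{1+\sqrt{D}}{2}]$, whereas your computation would have to be redone there with the matrix of multiplication by $\tfrac{1+\sqrt{D}}{2}$ (a routine but nontrivial adaptation, including the distinction between the conclusions $\Gamma_0(n)$ and $\Gamma_0(\tfrac n2)$).
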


\begin{proof}
  By Proposition \ref{prp:general m not equiv 2 mod 4}, we may find two invertible ideals $I_1=(m_1, \mu_1+\sqrt{D})$ and $I_2=(m_2, \mu_2+\sqrt{D})$ of $\mathbb{Z}[\sqrt{D}]$ respectively corresponding to the roots $\mu_1 \bmod {m_1}$ and $\mu_2 \bmod {m_2}$.
  We note that \eqref{eq: congruence subgroup I} implies $\xi I_1=I_2$.

(i) If either $n$ or $\frac{D-\nu^2}{n}$ is odd, we  may have an invertible ideal $I$ of $\mathbb{Z}[\sqrt{D}]$ corresponding to the root $\nu \bmod n$ by Proposition \ref{prp:general m not equiv 2 mod 4} as well. Due to the congruence conditions modulo $n$, we may write
\begin{equation}
    \begin{pmatrix}
        1 & \mu_j \\
        0 & m_j
    \end{pmatrix}
    =
    \begin{pmatrix}
        1 & k_j \\
        0 & n_j
    \end{pmatrix}
    \begin{pmatrix}
        1 & \nu \\
        0 & n
    \end{pmatrix},
\end{equation}
where $n_j=\frac{m_j}{n}\in\mathbb{Z}$ and $k_j\in\mathbb{Z}$. This implies $I_j\subseteq I$ so $I_jI^{-1}\subseteq\mathbb{Z}[\sqrt{D}]$ for $j=1,2$. Let $\nu_1 \bmod n_1$ correspond to the invertible ideal $I_1I^{-1}$ of $\mathbb{Z}[\sqrt{D}]$. We have
\begin{equation}
    \begin{pmatrix}
        1 & \mu_1 \\
        0 & m_1
    \end{pmatrix}
    =
    \begin{pmatrix}
        1 & k \\
        0 & n
    \end{pmatrix}
    \begin{pmatrix}
        1 & \nu_1 \\
        0 & n_1
    \end{pmatrix}
\label{eq: nu_1 n_1}\end{equation}
for some $k\in\mathbb{Z}$. Since $\xi I_1I^{-1}=I_2I^{-1}\subseteq\mathbb{Z}[\sqrt{D}]$, then
\begin{equation}
    \begin{pmatrix}
        1 & \nu_1 \\
        0 & n_1
    \end{pmatrix}
    \begin{pmatrix}
        \sqrt{D} & -\sqrt{D} \\
        1 & 1
    \end{pmatrix}
    \begin{pmatrix}
        \xi & 0 \\
        0 & \overline{\xi}
    \end{pmatrix}
    =M
    \begin{pmatrix}
        \sqrt{D} & -\sqrt{D} \\
        1 & 1
    \end{pmatrix}
\label{eq: nu_1 n_1 M i}\end{equation}
for some integer matrix $M\in \mathrm{M}_2(\mathbb{Z})$. By comparing \eqref{eq: congruence subgroup I} with \eqref{eq: nu_1 n_1 M i}, we conclude
\begin{equation}
    \begin{pmatrix}
        1 & k \\
        0 & n
    \end{pmatrix}M
    =\gamma
    \begin{pmatrix}
        1 & \mu_2\\
        0 & m_2
    \end{pmatrix}
\end{equation}
so
\begin{equation}
    M=\frac{1}{n}\begin{pmatrix}
        n & -k \\
        0 & 1
    \end{pmatrix}\gamma
    \begin{pmatrix}
        1 & \mu_2 \\
        0 & m_2
    \end{pmatrix}
    =
    \begin{pmatrix}
        * & * \\
        \frac{c}{n} & *
    \end{pmatrix}.
\end{equation}
Since the entries of $M$ are integers, then $n\mid c$, which means $\gamma\in\Gamma_0(n)$.

(ii) If both $n$ and $\frac{D-\nu^2}{n}$ are even, then $m_j$ is even, and since then $\frac{D-\mu_j^2}{m_j}$ is odd, we have $m_j \equiv 0 \pmod {2n}$.
Denote $J_0=(2,1+\sqrt{D})$ a non-invertible ideal of $\mathbb{Z}[\sqrt{D}]$. Working in $\mathbb{Z}[\sqrt{D}]$, we obtain
\begin{equation}
    \begin{aligned}
        I_jJ_0&=(2m_j,2\mu_j+2\sqrt{D},m_j+m_j\sqrt{D},\mu_j+D+(\mu_j+1)\sqrt{D})\\
        &=(2m_j,2\mu_j+2\sqrt{D},m_j+m_j\sqrt{D},D-\mu_j^2)\\
        &=(2m_j,2\mu_j+2\sqrt{D},m_j+m_j\sqrt{D},m_j)\\
        &=2(m_j/2,\mu_j+\sqrt{D}) 
    \end{aligned}
\end{equation}
where the third equality follows by $D-\mu_j^2=r_jm_j$ with odd integer $r_j$. Since $4\mid m_j$, then both $\frac{m_j}{2}$ and $\frac{D-\mu_j^2}{m_j/2}$ are even. Let $J_j$ be the ideal of $\mathbb{Z}[\tfrac{1+\sqrt{D}}{2}]$ corresponding to the root $\mu_j \bmod \frac{m_j}{2}$. Since
\begin{equation}
  \label{eq:Jjbasis}
    \begin{pmatrix}
        1 & \frac{\mu_j-1}{2} \\
        0 & \frac{m_j}{4}
    \end{pmatrix}
    \begin{pmatrix}
        \frac{1+\sqrt{D}}{2} & \frac{1-\sqrt{D}}{2} \\
        1 & 1
    \end{pmatrix}
    =\frac{1}{2}
    \begin{pmatrix}
        1 & \mu_j \\
        0 & \frac{m_j}{2}
    \end{pmatrix}
    \begin{pmatrix}
        \sqrt{D} & -\sqrt{D} \\
        1 & 1
    \end{pmatrix},
\end{equation}
we have $J_j=\frac{1}{4}I_jJ_0$. 
Let $J$ be the ideal of $\mathbb{Z}[\frac{1+\sqrt{D}}{2}]$ corresponding to the root $\nu \bmod n$. Due to the congruence conditions modulo $n$, we have 
\begin{equation}
    \begin{pmatrix}
        1 & \frac{\mu_j-1}{2} \\
        0 & \frac{m_j}{4}
    \end{pmatrix}
    =
    \begin{pmatrix}
        1 & k_j \\
        0 & \frac{n_j}{2}
    \end{pmatrix}
    \begin{pmatrix}
        1 & \frac{\nu-1}{2} \\
        0 & \frac{n}{2}
    \end{pmatrix},
\end{equation}
where $k_j\in\mathbb{Z}$ and $n_j=\frac{m_j}{n}$.
This implies $J_j\subseteq J$ so $J_jJ^{-1}\subseteq \mathbb{Z}[\frac{1+\sqrt{D}}{2}]$. Let $\nu_1 \bmod n_1$ correspond to the ideal $J_1J^{-1}$. Since $J_1\subseteq J_1J^{-1}$, we have
\begin{equation}
    \begin{pmatrix}
        1 & \frac{\mu_1-1}{2} \\
        0 & \frac{m_1}{4}
    \end{pmatrix}
    =
    \begin{pmatrix}
        1 & k \\
        0 & \frac{n}{2}
    \end{pmatrix}
    \begin{pmatrix}
        1 & \frac{\nu_1-1}{2} \\
        0 & \frac{n_1}{2}
    \end{pmatrix}
\end{equation}
for some $k\in\mathbb{Z}$. As $\xi I_1=I_2$ and $J_j=\frac{1}{4}I_jJ_0$, we have $\xi J_1J^{-1}=J_2J^{-1}\subseteq\mathbb{Z}[\frac{1+\sqrt{D}}{2}]$, and so 
\begin{equation}
    \begin{pmatrix}
        1 & \frac{\nu_1-1}{2} \\
        0 & \frac{n_1}{2}
    \end{pmatrix}
    \begin{pmatrix}
        \frac{1+\sqrt{D}}{2} & \frac{1-\sqrt{D}}{2}\\
        1 & 1
    \end{pmatrix}  
    \begin{pmatrix}
        \xi & 0 \\
        0 & \overline{\xi}
    \end{pmatrix}
    =M
    \begin{pmatrix}
        \frac{1+\sqrt{D}}{2} & \frac{1-\sqrt{D}}{2}\\
        1 & 1
    \end{pmatrix}.
\label{eq: nu_1 n_1 I ii}\end{equation}
From a calculation similar to \eqref{eq:Jjbasis}, the equation \eqref{eq: congruence subgroup I} implies
\begin{equation}
    \begin{pmatrix}
        1 & \frac{\mu_1-1}{2} \\
        0 & \frac{m_1}{2}
    \end{pmatrix}
    \begin{pmatrix}
        \frac{1+\sqrt{D}}{2} & \frac{1-\sqrt{D}}{2}\\
        1 & 1
    \end{pmatrix}
    \begin{pmatrix}
        \xi & 0 \\
        0 & \overline{\xi}
    \end{pmatrix}
    =\gamma
    \begin{pmatrix}
        1 & \frac{\mu_2-1}{2} \\
        0 & \frac{m_2}{2}
    \end{pmatrix}
    \begin{pmatrix}
        \frac{1+\sqrt{D}}{2} & \frac{1-\sqrt{D}}{2}\\
        1 & 1
    \end{pmatrix}
\end{equation}
so this yields
\begin{equation}
    \begin{pmatrix}
        1 & \frac{\mu_1-1}{2} \\
        0 & \frac{m_1}{4}
    \end{pmatrix}\!
    \begin{pmatrix}
        \frac{1+\sqrt{D}}{2} & \frac{1-\sqrt{D}}{2}\\
        1 & 1
    \end{pmatrix}\!
    \begin{pmatrix}
        \xi & 0 \\
        0 & \overline{\xi}
    \end{pmatrix}
    =
    \begin{pmatrix}
        1 & 0 \\
        0 & \frac{1}{2}
    \end{pmatrix}\gamma
    \begin{pmatrix}
        1 & \frac{\mu_2-1}{2} \\
        0 & \frac{m_2}{2}
    \end{pmatrix}\!
    \begin{pmatrix}
        \frac{1+\sqrt{D}}{2} & \frac{1-\sqrt{D}}{2}\\
        1 & 1
    \end{pmatrix}
\label{eq: mu_1 m_1 I ii}\end{equation}
for some integer matrix $M\in\mathrm{M}_2(\mathbb{Z})$. By comparing \eqref{eq: nu_1 n_1 I ii} with \eqref{eq: mu_1 m_1 I ii}, we conclude
\begin{equation}
    \begin{pmatrix}
        1 & k \\
        0 & \frac{n}{2}
    \end{pmatrix}M
    =
    \begin{pmatrix}
        1 & 0 \\
        0 & \frac{1}{2}
    \end{pmatrix}\gamma
    \begin{pmatrix}
        1 & \frac{\mu_2-1}{2}\\
        0 & \frac{m_2}{2}
    \end{pmatrix}
\end{equation}
so
\begin{equation}
    M=\frac{2}{n}\begin{pmatrix}
        \frac{n}{2} & -k \\
        0 & 1
    \end{pmatrix}
    \begin{pmatrix}
        1 & 0 \\
        0 & \frac{1}{2}
    \end{pmatrix}
    \gamma
    \begin{pmatrix}
        1 & \frac{\mu_2-1}{2}\\
        0 & \frac{m_2}{2}
    \end{pmatrix}
    =
    \begin{pmatrix}
        * & * \\
        \frac{c}{n} & *
    \end{pmatrix}.
\end{equation}
As before, we have $n\mid c$ also and $\gamma\in\Gamma_0(n)$.
\end{proof}





\begin{lm}
    Let $D\equiv 1 \pmod 4$ and let $m_1, m_2$ be even. Let $\mu_1 \bmod m_1$ and $\mu_2 \bmod m_2$ satisfy $\mu_1^2\equiv D \pmod {m_1}$ and $\mu_2^2\equiv D \pmod {m_2}$ with both $\frac{D-\mu_1^2}{m_1}$ and $\frac{D-\mu_2^2}{m_2}$ even. Suppose that there is a root $\nu \bmod n$ of $\nu^2\equiv D \pmod n$ such that $m_1 \equiv m_2 \equiv 0 \pmod n$ and $\mu_1\equiv \mu_2\equiv \nu \pmod n$, and suppose that there exists $\gamma\in\SL(2,\mathbb{Z})$ such that
    \begin{equation}
        \begin{pmatrix}
            1 & \frac{\mu_1-1}{2} \\
            0 & \frac{m_1}{2}
        \end{pmatrix}
        \begin{pmatrix}
            \frac{1+\sqrt{D}}{2} & \frac{1-\sqrt{D}}{2}\\
            1 & 1
        \end{pmatrix}
        \begin{pmatrix}
            \zeta & 0 \\
            0 & \overline{\zeta}
        \end{pmatrix}
        =\gamma
        \begin{pmatrix}
            1 & \frac{\mu_2-1}{2} \\
            0 & \frac{m_2}{2}
        \end{pmatrix}
        \begin{pmatrix}
            \frac{1+\sqrt{D}}{2} & \frac{1-\sqrt{D}}{2}\\
            1 & 1
        \end{pmatrix}  
    \label{eq: congruence subgroup J}\end{equation}
    for some totally positive $\zeta$. (i) If $n$ is odd, then $\gamma\in\Gamma_0(n)$. (ii) If $n$ is even, then $\gamma\in\Gamma_0\!\left(\frac{n}{2}\right)$.
\label{lm: congruence subgroup J}\end{lm}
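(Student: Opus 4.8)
The plan is to run the argument of Lemma \ref{lm: congruence subgroup I} essentially verbatim, but with every ideal living in the maximal order $\mathbb{Z}[\tfrac{1+\sqrt D}{2}]$, which is Dedekind so that all the ideals occurring are automatically invertible. By Proposition \ref{prp:general m equiv 2 mod 4}, the roots $\mu_1 \bmod m_1$ and $\mu_2 \bmod m_2$ correspond to ideals $J_1, J_2 \subseteq \mathbb{Z}[\tfrac{1+\sqrt D}{2}]$ whose bases are the matrices appearing in \eqref{eq: congruence subgroup J}, and that equation is precisely the statement $\zeta J_1 = J_2$ for the given totally positive $\zeta$. The first step is then to attach to the fixed root $\nu \bmod n$ an ideal $J \subseteq \mathbb{Z}[\tfrac{1+\sqrt D}{2}]$ and record its Hermite basis $\begin{pmatrix} 1 & * \\ 0 & \N(J) \end{pmatrix}$ relative to $\{\tfrac{1+\sqrt D}{2}, 1\}$.

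Here the two cases of the lemma differ only through $\N(J)$. If $n$ is even, then $\nu$ is odd and, exactly as in the parity remark inside the proof of Lemma \ref{lm: invariant}, both $n$ and $\tfrac{D-\nu^2}{n}$ are even; Proposition \ref{prp:general m equiv 2 mod 4} then applies directly and gives $J$ with basis $\begin{pmatrix} 1 & \frac{\nu-1}{2} \\ 0 & \frac n2 \end{pmatrix}$, so $\N(J) = \frac n2$. If $n$ is odd, Proposition \ref{prp:general m equiv 2 mod 4} does not apply (its normal form needs even norm), so instead I take $J$ to be the extension to $\mathbb{Z}[\tfrac{1+\sqrt D}{2}]$ of the invertible ideal $(n, \nu + \sqrt D) \subseteq \mathbb{Z}[\sqrt D]$. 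Choosing the odd representative of $\nu$ and using $\omega^2 = \omega + \tfrac{D-1}{4}$ with $\omega = \tfrac{1+\sqrt D}{2}$, a direct computation shows the $\{\omega, 1\}$-basis of $J$ is $\begin{pmatrix} 1 & \frac{\nu-1}{2} \\ 0 & n \end{pmatrix}$, so $\N(J) = n$.

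Next I would prove the inclusions $J_1, J_2 \subseteq J$ by exhibiting the basis factorizations $\begin{pmatrix} 1 & \frac{\mu_j - 1}{2} \\ 0 & \frac{m_j}{2} \end{pmatrix} = \begin{pmatrix} 1 & k_j \\ 0 & \frac{m_j}{2\N(J)} \end{pmatrix}\begin{pmatrix} 1 & \frac{\nu-1}{2} \\ 0 & \N(J) \end{pmatrix}$, which hold because the congruence hypotheses give $\mu_j \equiv \nu \pmod{2n}$: this is immediate from $\mu_j \equiv \nu \pmod n$ when $n$ is even, and for $n$ odd it follows from $\mu_j$ and $\nu$ both being odd together with the Chinese remainder theorem. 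Letting $\nu_1 \bmod n_1$ be the root attached to the integral ideal $J_1 J^{-1}$, the same manipulation that produced \eqref{eq: nu_1 n_1} and \eqref{eq: nu_1 n_1 M i} --- now using $\zeta J_1 J^{-1} = J_2 J^{-1} \subseteq \mathbb{Z}[\tfrac{1+\sqrt D}{2}]$ --- yields an integer matrix $M \in \mathrm{M}_2(\mathbb{Z})$ with $\begin{pmatrix} 1 & k \\ 0 & \N(J) \end{pmatrix} M = \gamma \begin{pmatrix} 1 & \frac{\mu_2-1}{2} \\ 0 & \frac{m_2}{2} \end{pmatrix}$.

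Solving for $M$ gives $M = \N(J)^{-1} \begin{pmatrix} \N(J) & -k \\ 0 & 1 \end{pmatrix} \gamma \begin{pmatrix} 1 & \frac{\mu_2-1}{2} \\ 0 & \frac{m_2}{2} \end{pmatrix}$, whose lower-left entry is $c / \N(J)$, writing $\gamma = \begin{pmatrix} a & b \\ c & d \end{pmatrix}$. Integrality of $M$ forces $\N(J) \mid c$, which reads $n \mid c$ when $n$ is odd and $\frac n2 \mid c$ when $n$ is even, giving $\gamma \in \Gamma_0(n)$ and $\gamma \in \Gamma_0(\frac n2)$ respectively. I expect the only real obstacle to be the odd-$n$ case: there $J$ is not of the form produced by Proposition \ref{prp:general m equiv 2 mod 4}, so its maximal-order basis must be computed by hand and the sharper congruence $\mu_j \equiv \nu \pmod{2n}$ must be justified; once these two points are settled, the remaining linear algebra is identical to that of Lemma \ref{lm: congruence subgroup I}.
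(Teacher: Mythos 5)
Your proposal is correct, and in the odd-$n$ case it takes a genuinely different route from the paper. For $n$ even the two arguments coincide: the paper also takes $J$ from Proposition \ref{prp:general m equiv 2 mod 4} with $\N(J)=\tfrac n2$ and runs the quotient-ideal argument in $\mathbb{Z}[\tfrac{1+\sqrt D}{2}]$. For $n$ odd, however, the paper descends to the non-maximal order: it notes that $2J_j$ is a (non-invertible) ideal of $\mathbb{Z}[\sqrt D]$ contained in the invertible ideal $I=(n,\nu+\sqrt D)\subseteq\mathbb{Z}[\sqrt D]$, works with $2J_jI^{-1}\subseteq\mathbb{Z}[\sqrt D]$ in the $\{\sqrt D,1\}$-basis, and then must insert the change-of-basis matrix $\begin{pmatrix}1&1\\0&2\end{pmatrix}$ when comparing with \eqref{eq: congruence subgroup J}, which is written in the $\{\tfrac{1+\sqrt D}{2},1\}$-basis. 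You instead extend $I$ to the maximal order and compute the basis of $J=I\,\mathbb{Z}[\tfrac{1+\sqrt D}{2}]$ by hand; your computation is right, since with $\nu$ chosen odd the lattice $\mathbb{Z}n+\mathbb{Z}\bigl(\tfrac{\nu-1}{2}+\tfrac{1+\sqrt D}{2}\bigr)$ is an ideal precisely because $4n\mid D-\nu^2$, which holds as $4\mid D-\nu^2$ and $n$ is odd. This buys a single uniform argument in one Dedekind ring (invertibility for free, no conversion matrix), with the case distinction surviving only in $\N(J)=n$ versus $\N(J)=\tfrac n2$; the paper's detour buys never having to produce a basis outside the normal forms of Propositions \ref{prp:general m not equiv 2 mod 4} and \ref{prp:general m equiv 2 mod 4}.

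One slip in your justification of $J_j\subseteq J$: the displayed factorization needs $\mu_j\equiv\nu\pmod{2\N(J)}$, and your blanket claim that the hypotheses give $\mu_j\equiv\nu\pmod{2n}$ is false when $n$ is even --- take $D=17$, $n=2$, $\nu=1$, $m_j=4$, $\mu_j=3$: all hypotheses hold, yet $3\not\equiv 1\pmod 4$. This is harmless, because in that case $\N(J)=\tfrac n2$ and the factorization needs only $\mu_j\equiv\nu\pmod n$, which is a hypothesis; the mod-$2n$ congruence (via the Chinese remainder theorem) is needed, and true, exactly when $n$ is odd. With that repaired, the remaining steps --- primitivity of $J_1J^{-1}$ (it contains $J_1$, which has an element with $\tfrac{1+\sqrt D}{2}$-coefficient $1$), the integer matrix $M$, and integrality of the lower-left entry $c/\N(J)$ --- go through exactly as in Lemma \ref{lm: congruence subgroup I}.
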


\begin{proof}
  By Proposition \ref{prp:general m equiv 2 mod 4}, we may find $J_1$ and $J_2$ be two ideals of $\mathbb{Z}[\frac{1+\sqrt{D}}{2}]$ respectively corresponding to the roots $\mu_1 \bmod {m_1}$ and $\mu_2 \bmod {m_2}$.
  We note that \eqref{eq: congruence subgroup J} implies $\zeta J_1=J_2$.

(i) If $n$ is odd, then we may have an ideal $I$ of $\mathbb{Z}[\sqrt{D}]$ corresponding to the root $\nu \bmod n$ by Proposition \ref{prp:general m not equiv 2 mod 4}. Since
\begin{equation}
    2\begin{pmatrix}
        1 & \frac{\mu_j-1}{2} \\
        0 & \frac{m_j}{2}
    \end{pmatrix}
    \begin{pmatrix}
        \frac{1+\sqrt{D}}{2} & \frac{1-\sqrt{D}}{2}\\
        1 & 1
    \end{pmatrix}
    =
    \begin{pmatrix}
        1 & \mu_j \\
        0 & m_j
    \end{pmatrix}
    \begin{pmatrix}
        \sqrt{D} & -\sqrt{D}\\
        1 & 1
    \end{pmatrix},    
\end{equation}
we have $2J_j$ is a (non-invertible) ideal of $\mathbb{Z}[\sqrt{D}]$. Due to the congruence conditions modulo $n$, we may write
\begin{equation}
    \begin{pmatrix}
        1 & \mu_j \\
        0 & m_j
    \end{pmatrix}
    =
    \begin{pmatrix}
        1 & k_j \\
        0 & n_j
    \end{pmatrix}
    \begin{pmatrix}
        1 & \nu \\
        0 & n
    \end{pmatrix}
\end{equation}
where $n_j=\frac{m_j}{n}\in\mathbb{Z}$ and $k_j\in\mathbb{Z}$, and so $2J_j \subseteq I$. Let $\nu_1 \bmod n_1$ correspond to the ideal $2J_1I^{-1}\subseteq\mathbb{Z}[\sqrt{D}]$. Then 
\begin{equation}
    \begin{pmatrix}
        1 & \mu_1 \\
        0 & m_1
    \end{pmatrix}
    =
    \begin{pmatrix}
        1 & k \\
        0 & n
    \end{pmatrix}
    \begin{pmatrix}
        1 & \nu_1 \\
        0 & n_1
    \end{pmatrix}
\end{equation}
for some $k\in\mathbb{Z}$. Since $\zeta(2J_1)I^{-1}=2J_2I^{-1}\subseteq\mathbb{Z}[\sqrt{D}]$, then
\begin{equation}
    \begin{pmatrix}
        1 & \nu_1 \\
        0 & n_1
    \end{pmatrix}
    \begin{pmatrix}
        \sqrt{D} & -\sqrt{D}\\
        1 & 1
    \end{pmatrix}  
    \begin{pmatrix}
        \zeta & 0 \\
        0 & \overline{\zeta}
    \end{pmatrix}
    =M
    \begin{pmatrix}
        \sqrt{D} & -\sqrt{D}\\
        1 & 1
    \end{pmatrix}  
\label{eq: (nu_1-1)/2 n_1/2 M i}\end{equation}
for some integer matrix $M\in\mathrm{M}_2(\mathbb{Z})$. By comparing \eqref{eq: congruence subgroup J} with \eqref{eq: (nu_1-1)/2 n_1/2 M i}, we conclude
\begin{equation}
    \begin{pmatrix}
        1 & k \\
        0 & n
    \end{pmatrix}M
    \begin{pmatrix}
        1 & 1 \\
        0 & 2
    \end{pmatrix}^{-1}
    =\gamma
    \begin{pmatrix}
        1 & \frac{\mu_2-1}{2}\\
        0 & \frac{m_2}{2} 
    \end{pmatrix},
\end{equation}
so
\begin{equation}
    M=\frac{1}{n}\begin{pmatrix}
        n & -k \\
        0 & 1
    \end{pmatrix}\gamma
    \begin{pmatrix}
        1 & \frac{\mu_2-1}{2}\\
        0 & \frac{m_2}{2} 
    \end{pmatrix}
    \begin{pmatrix}
        1 & 1 \\
        0 & 2
    \end{pmatrix}
    =
    \begin{pmatrix}
        * & * \\
        \frac{c}{n} & *
    \end{pmatrix}.
\end{equation}
As before, we conclude $\gamma\in\Gamma_0(n)$.

(ii) If $n$ is even, then let the ideal $J$ of $\mathbb{Z}[\frac{1+\sqrt{D}}{2}]$ correspond to the root $\nu \bmod n$ by Proposition \ref{prp:general m equiv 2 mod 4}. We now may write
\begin{equation}
    \begin{pmatrix}
        1 & \frac{\mu_j-1}{2} \\
        0 & \frac{m_j}{2}
    \end{pmatrix}
    =
    \begin{pmatrix}
        1 & k_j \\
        0 & \frac{n_j}{2}
    \end{pmatrix}
    \begin{pmatrix}
        1 & \frac{\nu-1}{2} \\
        0 & \frac{n}{2}
    \end{pmatrix},
\end{equation}
where $n_j=\frac{2m_j}{n}\in\mathbb{Z}$ and $k_j\in\mathbb{Z}$. This implies $J_j\subseteq J$ so $J_jJ^{-1}\subseteq\mathbb{Z}[\frac{1+\sqrt{D}}{2}]$ for $j=1,2$. Let $\nu_1 \bmod n_1$ be the root corresponding to the ideal $J_1J^{-1}$. Then 
\begin{equation}
    \begin{pmatrix}
        1 & \frac{\mu_1-1}{2} \\
        0 & \frac{m_1}{2}
    \end{pmatrix}
    =
    \begin{pmatrix}
        1 & k \\
        0 & \frac{n}{2}
    \end{pmatrix}
    \begin{pmatrix}
        1 & \frac{\nu_1-1}{2} \\
        0 & \frac{n_1}{2}
    \end{pmatrix}
\end{equation}
for some $k\in\mathbb{Z}$. Since $\zeta J_1J^{-1}=J_2J^{-1}\subseteq\mathbb{Z}[\frac{1+\sqrt{D}}{2}]$,
\begin{equation}
    \begin{pmatrix}
        1 & \frac{\nu_1-1}{2} \\
        0 & \frac{n_1}{2}
    \end{pmatrix}
    \begin{pmatrix}
        \frac{1+\sqrt{D}}{2} & \frac{1-\sqrt{D}}{2}\\
        1 & 1
    \end{pmatrix}  
    \begin{pmatrix}
        \zeta & 0 \\
        0 & \overline{\zeta}
    \end{pmatrix}
    =M
    \begin{pmatrix}
        \frac{1+\sqrt{D}}{2} & \frac{1-\sqrt{D}}{2}\\
        1 & 1
    \end{pmatrix}  
\label{eq: nu_1-1/2 n_1/2 M ii}\end{equation}
holds for some integer matrix $M\in\mathrm{M}_2(\mathbb{Z})$. By comparing \eqref{eq: congruence subgroup J} with \eqref{eq: nu_1-1/2 n_1/2 M ii}, we conclude
\begin{equation}
    \begin{pmatrix}
        1 & k \\
        0 & \frac{n}{2}
    \end{pmatrix}M
    =\gamma
    \begin{pmatrix}
        1 & \frac{\mu_2-1}{2}\\
        0 & \frac{m_2}{2} 
    \end{pmatrix},
\end{equation}
so
\begin{equation}
    M=\frac{2}{n}\begin{pmatrix}
        \frac{n}{2} & -k \\
        0 & 1
    \end{pmatrix}\gamma
    \begin{pmatrix}
        1 & \frac{\mu_2-1}{2}\\
        0 & \frac{m_2}{2} 
    \end{pmatrix}
    =
    \begin{pmatrix}
        * & * \\
        \frac{2c}{n} & *
    \end{pmatrix}.
\end{equation}
As $\frac{n}{2}\mid c$, the claim $\gamma\in\Gamma_0\!\left(\frac{n}{2}\right)$ has proven.
\end{proof}

Fixed a root $\nu \bmod n$, we have now established that the tops $\frac{\mu}{m} + \mathrm{i} \frac{\sqrt{D}}{m}$ with $\mu \equiv 0 \pmod n$ and $\mu \equiv \nu \pmod n$ are exactly obtained from the $\Gamma_0(n)$-orbits of the geodesics $\gamma_{1;k} \bm{c}_{I_k}$ for $1\leq k \leq h_1$ if $m$ or $\frac{D-\mu^2}{m}$ is odd; otherwise, they come from either the $\Gamma_0(n)$-orbits (when $n$ is odd) or the $\Gamma_0(\frac{n}{2})$-orbits (when $n$ is even) of the geodesics $\gamma_{2;l} \bm{c}_{J_l}$ for $1 \leq l \leq h_2$.
We now consider the length of these geodesics projected to $\Gamma_0(n) \backslash \mathbb{H}$ or $\Gamma_0(\frac{n}{2}) \backslash \mathbb{H}$.
This amounts to understanding the stabilizers of these geodesics in $\Gamma_0(n)$ or $\Gamma_0(\frac{n}{2})$, and we begin by recording a couple of basic facts about the relation between the totally positive fundamental units $\varepsilon_1$ and $\varepsilon_2$ of $\mathbb{Z}[\sqrt{D}]$ and $\mathbb{Z}[\tfrac{1 + \sqrt{D}}{2}]$.

\begin{prp}
    Let $\varepsilon_1$ and $\varepsilon_2$ be totally positive fundamental units of $\mathbb{Z}[\sqrt{D}]$ and $\mathbb{Z}[\frac{1+\sqrt{D}}{2}]$ respectively.
    \begin{enumerate}[(i)]
    \item If $D\equiv 1\pmod 8$, then $\varepsilon_1=\varepsilon_2$.
        \item If $D\equiv 5\pmod 8$, then either $\varepsilon_1=\varepsilon_2$ or $\varepsilon_1=\varepsilon_2^3$.
    \end{enumerate}
\label{prp: epsilon1 epsilon2}\end{prp}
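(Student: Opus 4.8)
The plan is to compare the two groups of totally positive units by reducing modulo the conductor $2$ of $\mathbb{Z}[\sqrt D]$ in $\mathbb{Z}[\tfrac{1+\sqrt D}{2}]$. Write $\mathcal{O}_1 = \mathbb{Z}[\sqrt D]$ and $\mathcal{O}_2 = \mathbb{Z}[\omega]$ with $\omega = \tfrac{1+\sqrt D}{2}$, so that $\mathcal{O}_1 = \mathbb{Z} + 2\mathbb{Z}\omega$, and let $U_i^+ = \langle \varepsilon_i\rangle$ denote the (infinite cyclic) group of totally positive units of $\mathcal{O}_i$. Since a totally positive unit of $\mathcal{O}_1$ is in particular a totally positive unit of $\mathcal{O}_2$, we have $U_1^+ \subseteq U_2^+$, whence $\varepsilon_1 = \varepsilon_2^f$ for a unique integer $f = [U_2^+ : U_1^+] \ge 1$. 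The whole proposition thus reduces to showing $f = 1$ when $D \equiv 1 \pmod 8$ and $f \in \{1,3\}$ when $D \equiv 5 \pmod 8$.

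First I would pin down $U_1^+$ inside $U_2^+$. Because $\mathcal{O}_1$ is stable under conjugation and any totally positive unit $u$ satisfies $\N(u) = u\overline{u} = 1$ (being $\pm 1$ and positive), one has $u^{-1} = \overline{u}$; hence a totally positive unit of $\mathcal{O}_2$ that happens to lie in $\mathcal{O}_1$ automatically has its inverse $\overline{u} \in \mathcal{O}_1$ and is therefore a totally positive unit of $\mathcal{O}_1$. This gives $U_1^+ = U_2^+ \cap \mathcal{O}_1$, so $f$ is the least positive integer with $\varepsilon_2^f \in \mathcal{O}_1$. Reducing modulo $2\mathcal{O}_2 \subseteq \mathcal{O}_1$, the image of $\mathcal{O}_1$ in $\mathcal{O}_2/2\mathcal{O}_2$ is exactly the prime field $\mathbb{F}_2$; since $\varepsilon_2$ maps to a unit, the condition $\varepsilon_2^f \in \mathcal{O}_1$ is equivalent to $\overline{\varepsilon}_2^{\,f} = 1$ in $(\mathcal{O}_2/2\mathcal{O}_2)^\times$. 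Therefore $f$ equals the multiplicative order of the image of $\varepsilon_2$ in $(\mathcal{O}_2/2\mathcal{O}_2)^\times$.

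It then remains to compute this unit group, which I would do from the minimal polynomial $x^2 - x + \tfrac{1-D}{4}$ of $\omega$ reduced modulo $2$. When $D \equiv 1 \pmod 8$ the constant term $\tfrac{1-D}{4}$ is even, so the polynomial factors as $x(x+1)$ and $\mathcal{O}_2/2\mathcal{O}_2 \cong \mathbb{F}_2 \times \mathbb{F}_2$, whose unit group is trivial; thus $f = 1$ and $\varepsilon_1 = \varepsilon_2$. When $D \equiv 5 \pmod 8$ the constant term is odd, so $x^2 + x + 1$ is irreducible over $\mathbb{F}_2$, giving $\mathcal{O}_2/2\mathcal{O}_2 \cong \mathbb{F}_4$ with cyclic unit group of order $3$; therefore $f$ divides $3$, i.e. $f \in \{1,3\}$, yielding $\varepsilon_1 = \varepsilon_2$ or $\varepsilon_1 = \varepsilon_2^3$.

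The one step that requires genuine care, and the main obstacle, is the identification $U_1^+ = U_2^+ \cap \mathcal{O}_1$: one must check that a totally positive $\mathcal{O}_2$-unit lying in the smaller, non-maximal order $\mathcal{O}_1$ is truly a \emph{unit} there, not merely an element. This is exactly where total positivity (forcing $\N(u) = +1$ and hence $u^{-1} = \overline{u}$) together with the conjugation-stability of $\mathbb{Z}[\sqrt D]$ enters. Everything following it is the routine reduction-modulo-$2$ computation described above.
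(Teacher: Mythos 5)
Your proof is correct, and it takes a genuinely different route from the paper's. The paper argues in coordinates: writing $\varepsilon_2 = a + b\,\tfrac{1+\sqrt{D}}{2}$, it uses the norm equation $a^2 + ab - b^2\tfrac{D-1}{4} = 1$ to force $b$ even when $D \equiv 1 \pmod 8$ (so $\varepsilon_2 \in \mathbb{Z}[\sqrt{D}]$), and when $D \equiv 5 \pmod 8$ with $b$ odd it explicitly expands $\varepsilon_2^2$ and $\varepsilon_2^3$ to show the square is not in $\mathbb{Z}[\sqrt{D}]$ while the cube is. You instead identify the index $f = [U_2^+ : U_1^+]$ with the multiplicative order of $\varepsilon_2$ in $(\mathcal{O}_2/2\mathcal{O}_2)^\times$, using that $2\mathcal{O}_2$ is the conductor, that $\mathcal{O}_1$ maps onto the prime field, and that the quotient ring is $\mathbb{F}_2 \times \mathbb{F}_2$ or $\mathbb{F}_4$ according to the splitting of $2$; this explains conceptually why the dichotomy is exactly $f = 1$ versus $f \in \{1,3\}$ (trivial unit group versus $|\mathbb{F}_4^\times| = 3$), avoids the slightly error-prone expansion of $\varepsilon_2^3$, and generalizes immediately to other conductors. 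You are also right that the step $U_1^+ = U_2^+ \cap \mathcal{O}_1$ needs care: the paper uses this implicitly (it passes from $\varepsilon_2 \in \mathbb{Z}[\sqrt{D}]$ to $\varepsilon_1 = \varepsilon_2$ without comment), whereas your argument via $\N(u)=1$, $u^{-1} = \overline{u}$, and conjugation-stability of $\mathbb{Z}[\sqrt{D}]$ makes it explicit; in that respect your write-up is more complete. The one thing the paper's computation buys that yours does not state directly is the explicit parity criterion — $\varepsilon_1 = \varepsilon_2$ if and only if $b$ is even — which is reused later (in Lemma \ref{lm:Gamma0n} and the discussion of Figures \ref{fig: cover 1} and \ref{fig: cover 2}), though it is easily recovered from your criterion $\varepsilon_2 \equiv 1 \pmod{2\mathcal{O}_2}$.
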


\begin{proof}
  Set $\varepsilon_2=a+b\frac{1+\sqrt{D}}{2}$ with $a,b\in\mathbb{Z}$. 

  (i) We have $\N(\varepsilon_2)=a^2+ab-b^2\frac{D-1}{4}=1$, and since $D\equiv 1\pmod 8$, we know $\frac{D-1}{4}$ is even.
  If $b$ is odd, then $a(a+b)$ is even and so is $\N(\varepsilon_2)$, a contradiction. Hence, $b$ is even, so $\varepsilon_2 \in \mathbb{Z}[\sqrt{D}]$ as required.

  (ii) Now, in the case that $D\equiv 5\pmod 8$, if $b$ is even, then $\varepsilon_1=\varepsilon_2$ as before.
  Suppose that $b$ is odd, so $\varepsilon_2\not\in\mathbb{Z}[\sqrt{D}]$, that is, $\varepsilon_1\neq\varepsilon_2$. As $a^2+ab-b^2\frac{D-1}{4}=1$, 
\begin{equation}
\begin{aligned}
    \varepsilon_2^2&=a^2+ab(1+\sqrt{D})+b^2\tfrac{D-1}{4}+b^2\tfrac{1+\sqrt{D}}{2} \\
    &=1-ab+b^2\tfrac{D-1}{2}+b(b+2a)\tfrac{1+\sqrt{D}}{2}.
\end{aligned}    
\end{equation}
Since $b(b+2a)$ is odd, $\varepsilon_2^2\not\in\mathbb{Z}[\sqrt{D}]$ so $\varepsilon_1\neq\varepsilon_2^2$. Now,
\begin{equation}
\begin{aligned}
    \varepsilon_2^3&=a-a^2b+ab^2(D-1)+b^3\tfrac{D-1}{4}+b\left(1+2a^2+2ab+b^2\tfrac{D+1}{2}\right)\tfrac{1+\sqrt{D}}{2}\\
    &=(a+ab^2D-b)+b(3+b^2D)\tfrac{1+\sqrt{D}}{2}.
\end{aligned}
\end{equation}
As $b$ is odd, $b(3+b^2D)$ is even. This implies $\varepsilon_2^3\in\mathbb{Z}[\sqrt{D}]$ and so $\varepsilon_1=\varepsilon_2^3$.
\end{proof}

We now turn to the stabilizers of the geodesics $\gamma_{1;k} \bm{c}_{I_k}$ and $\gamma_{2;l}\bm{c}_{I_l}$ in $\Gamma_0(n)$ or $\Gamma_0(\frac{n}{2})$.
These will have a generator of the form
\begin{equation}
  \label{eq:generator1}
  \gamma_{1;k} \mathfrak{B}_k
\begin{pmatrix}
  \varepsilon_1^{j_1} & 0 \\
  0 & \varepsilon_1^{-j_1}
\end{pmatrix}
\mathfrak{B}_k \gamma_{1;k}^{_1},\ \gamma_{2;l} \mathfrak{D}_l
\begin{pmatrix}
  \varepsilon_2^{j_2} & 0 \\
  0 & \varepsilon_2^{-j_2}
\end{pmatrix}
\mathfrak{D}_l^{-1} \gamma_{2;l}^{-1},
\end{equation}
where $j_1$ and $j_2$ are the smallest positive integers so that these matrices are in $\Gamma_0(n)$ or $\Gamma_0(\frac{n}{2})$.
We note that the length of the projections of these geodesics to $\Gamma_0(n) \backslash \mathbb{H}$ or $\Gamma_0(\frac{n}{2}) \backslash \mathbb{H}$ is then $2 j_1 \log \varepsilon_1$, $2 j_2 \log \varepsilon_2$.
The following lemmas show that $j_1$ and $j_2$ are equal to $1$.

\begin{lm}
    For any $k$, we have 
    \begin{equation}
      \label{eq:generator2}
        \gamma_{1;k}\mathfrak{B}_k
        \begin{pmatrix}
            \varepsilon_1 & 0 \\
            0 & \varepsilon_1^{-1}
        \end{pmatrix}
        \mathfrak{B}_k^{-1}\gamma_{1;k}^{-1}\in\Gamma_0(n).
      \end{equation}
      Similarly, for any $l$, we have
      \begin{equation}
        \label{eq:generator3}
        \gamma_{2;l} \mathfrak{D}_l
\begin{pmatrix}
  \varepsilon_2 & 0 \\
  0 & \varepsilon_2^{-1}
\end{pmatrix}
\mathfrak{D}_l^{-1} \gamma_{2;l}^{-1} \in \Gamma_0(n)
\end{equation}
if $n$ is odd; otherwise (if $n$ is even),
\begin{equation}
  \label{eq:generator4}
  \gamma_{2;l} \mathfrak{D}_l
\begin{pmatrix}
  \varepsilon_2 & 0 \\
  0 & \varepsilon_2^{-1}
\end{pmatrix}
\mathfrak{D}_l^{-1} \gamma_{2;l}^{-1} \in \Gamma_0(\tfrac{n}{2}).
\end{equation} 
\label{lm: gamma_0 I}\end{lm}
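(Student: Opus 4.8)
The plan is to reduce both claims to Lemmas \ref{lm: congruence subgroup I} and \ref{lm: congruence subgroup J} by exhibiting the proposed generator as an instance of the transition matrix $\gamma$ analysed there, applied to the fixed root attached to $I_{k,\nu,n}$ (resp. $J_{l,\nu,n}$) on both sides. Recall from Lemma \ref{lm: I_k,nu,n} that $I_{k,\nu,n}=\xi I_k$ for some totally positive $\xi$ and has a basis $\begin{pmatrix}1&\mu\\0&m\end{pmatrix}\begin{pmatrix}\sqrt{D}\\1\end{pmatrix}$ with $m\equiv 0\pmod n$, $\mu\equiv\nu\pmod n$, and either $m$ or $\frac{D-\mu^2}{m}$ odd; and that $\gamma_{1;k}$ is, by construction via Propositions \ref{prp: B_k} and \ref{prp: congruence I SL(2,Z)}, the element of $\Gamma$ for which $\begin{pmatrix}1&\mu\\0&m\end{pmatrix}\begin{pmatrix}\sqrt{D}&-\sqrt{D}\\1&1\end{pmatrix}=\gamma_{1;k}\mathfrak{B}_k\begin{pmatrix}\xi&0\\0&\overline{\xi}\end{pmatrix}$.

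First I would compute the action of the generator $g:=\gamma_{1;k}\mathfrak{B}_k\begin{pmatrix}\varepsilon_1&0\\0&\varepsilon_1^{-1}\end{pmatrix}\mathfrak{B}_k^{-1}\gamma_{1;k}^{-1}$ on this basis. Writing $P=\gamma_{1;k}\mathfrak{B}_k$, the defining identity gives $P^{-1}$ times the basis matrix equal to $\begin{pmatrix}\xi&0\\0&\overline{\xi}\end{pmatrix}$, so since diagonal matrices commute,
\[
g\begin{pmatrix}1&\mu\\0&m\end{pmatrix}\begin{pmatrix}\sqrt{D}&-\sqrt{D}\\1&1\end{pmatrix}=\begin{pmatrix}1&\mu\\0&m\end{pmatrix}\begin{pmatrix}\sqrt{D}&-\sqrt{D}\\1&1\end{pmatrix}\begin{pmatrix}\varepsilon_1&0\\0&\varepsilon_1^{-1}\end{pmatrix}.
\]
Because $\varepsilon_1$ is totally positive it has norm one, so $\varepsilon_1^{-1}=\overline{\varepsilon_1}$, and this is precisely \eqref{eq: congruence subgroup I} with $\mu_1=\mu_2=\mu$, $m_1=m_2=m$, the totally positive element taken to be the unit $\varepsilon_1$, and $\gamma=g$. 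Since the fixed root meets the congruence conditions modulo $n$ and the required parity condition, Lemma \ref{lm: congruence subgroup I} yields $g\in\Gamma_0(n)$, which is \eqref{eq:generator2}.

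The second claim follows by the identical argument applied to $J_{l,\nu,n}$: the generator $\gamma_{2;l}\mathfrak{D}_l\begin{pmatrix}\varepsilon_2&0\\0&\varepsilon_2^{-1}\end{pmatrix}\mathfrak{D}_l^{-1}\gamma_{2;l}^{-1}$ carries the basis $\begin{pmatrix}1&\frac{\mu-1}{2}\\0&\frac{m}{2}\end{pmatrix}\begin{pmatrix}\frac{1+\sqrt{D}}{2}&\frac{1-\sqrt{D}}{2}\\1&1\end{pmatrix}$ of $J_{l,\nu,n}$ to its right multiple by $\begin{pmatrix}\varepsilon_2&0\\0&\varepsilon_2^{-1}\end{pmatrix}$, reproducing \eqref{eq: congruence subgroup J} with $\zeta=\varepsilon_2$ (again totally positive, so $\overline{\varepsilon_2}=\varepsilon_2^{-1}$). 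Here the fixed root has both $m$ and $\frac{D-\mu^2}{m}$ even, so Lemma \ref{lm: congruence subgroup J} applies and gives membership in $\Gamma_0(n)$ when $n$ is odd and in $\Gamma_0(\tfrac{n}{2})$ when $n$ is even, matching \eqref{eq:generator3} and \eqref{eq:generator4}.

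Essentially nothing beyond bookkeeping happens: the main conceptual step is recognizing that conjugating $\begin{pmatrix}\varepsilon_1&0\\0&\varepsilon_1^{-1}\end{pmatrix}$ through $\gamma_{1;k}\mathfrak{B}_k$ amounts to right-multiplying the chosen basis of $I_{k,\nu,n}$ by it, which turns the stabilizer generator into an instance of the transition matrix $\gamma$ already understood. The only hypothesis needing verification is that the relevant root $\mu\bmod m$ genuinely satisfies $m\equiv 0\pmod n$ and $\mu\equiv\nu\pmod n$, and this is exactly the output of Lemma \ref{lm: I_k,nu,n}, so no new obstacle arises.
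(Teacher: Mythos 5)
Your proposal is correct and follows essentially the same route as the paper's proof: both exhibit the candidate generator $\gamma$ as satisfying \eqref{eq: congruence subgroup I} (resp. \eqref{eq: congruence subgroup J}) with $\mu_1=\mu_2=\mu$, $m_1=m_2=m$ and the totally positive element taken to be the unit, and then invoke Lemma \ref{lm: congruence subgroup I} (resp. Lemma \ref{lm: congruence subgroup J}). Your additional remarks—that norm one gives $\overline{\varepsilon_j}=\varepsilon_j^{-1}$, and that the congruence and parity hypotheses are supplied by Lemma \ref{lm: I_k,nu,n}—are details the paper leaves implicit, but the argument is the same.
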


\begin{proof}
  Let $\gamma$ denote the matrix in \eqref{eq:generator2}.
  From the definition of $\gamma_{1;k}$, we have that
\begin{equation}
    \gamma_{1;k}\mathfrak{B}_k
    \begin{pmatrix}
        \xi & 0 \\
        0 & \overline{\xi}
    \end{pmatrix}
    =
    \begin{pmatrix}
        1 & \mu \\
        0 & m
    \end{pmatrix}
    \begin{pmatrix}
        \sqrt{D} & -\sqrt{D} \\
        1 & 1
    \end{pmatrix}
  \end{equation}
  for some totally positive $\xi$ and root $\mu \bmod m$ satisfying $m \equiv 0 \pmod n$ and $\mu \equiv \nu \pmod n$.
  Therefore
\begin{equation}
  \gamma
  \begin{pmatrix}
        1 & \mu \\
        0 & m
    \end{pmatrix}
    \begin{pmatrix}
        \sqrt{D} & -\sqrt{D} \\
        1 & 1
    \end{pmatrix}
    =
    \begin{pmatrix}
        1 & \mu \\
        0 & m
    \end{pmatrix}
    \begin{pmatrix}
        \sqrt{D} & -\sqrt{D} \\
        1 & 1
    \end{pmatrix}
    \begin{pmatrix}
        \varepsilon_1 & 0 \\
        0 & \varepsilon_1^{-1}
    \end{pmatrix}.
  \end{equation}
  Applying Lemma \ref{lm: congruence subgroup I} in the case that $m_1 = m_2 = m$ and $\mu_1 = \mu_2 = \mu$, we conclude that $\gamma \in \Gamma_0(n)$.

  The other claims in the lemma follow similarly using Lemma \ref{lm: congruence subgroup J}. 
\end{proof}

The inclusion \eqref{eq:generator4} raises the question of whether or not the matrix is in fact in $\Gamma_0(n)$ instead of just $\Gamma_0(\frac{n}{2})$.
As the answer is useful in what follows, we record the following lemma.

\begin{lm}
  \label{lm:Gamma0n}
  Suppose that $n$ is even.
  Then we have $\gamma_{2;l} \mathfrak{D}_l
\begin{pmatrix}
  \varepsilon_2 & 0 \\
  0 & \varepsilon_2^{-1}
\end{pmatrix}
\mathfrak{D}_l^{-1} \gamma_{2;l}^{-1} \in \Gamma_0(n)$ if and only if $\varepsilon_1 = \varepsilon_2$.
\end{lm}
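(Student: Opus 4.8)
The plan is to reduce the claim to a single divisibility condition on the lower-left entry of the matrix in question, compute that entry explicitly, and then read off the answer from the parity of a coefficient of $\varepsilon_2$. Write $\gamma$ for the matrix $\gamma_{2;l}\mathfrak{D}_l\,\mathrm{diag}(\varepsilon_2,\varepsilon_2^{-1})\,\mathfrak{D}_l^{-1}\gamma_{2;l}^{-1}$. By Lemma \ref{lm: gamma_0 I} we already know $\gamma\in\Gamma_0(\tfrac n2)$, so the only thing to decide is whether its lower-left entry $c$ satisfies $n\mid c$ rather than merely $\tfrac n2\mid c$. As in the proof of Lemma \ref{lm: gamma_0 I}, the defining relation of $\gamma_{2;l}$ gives $\gamma_{2;l}\mathfrak{D}_l\,\mathrm{diag}(\zeta,\overline\zeta)=P$ for some totally positive $\zeta$, where
\begin{equation*}
  P=\begin{pmatrix}1 & \frac{\mu-1}{2}\\0 & \frac{m}{2}\end{pmatrix}\begin{pmatrix}\frac{1+\sqrt{D}}{2} & \frac{1-\sqrt{D}}{2}\\1 & 1\end{pmatrix}=\begin{pmatrix}\frac{\mu+\sqrt{D}}{2} & \frac{\mu-\sqrt{D}}{2}\\\frac{m}{2} & \frac{m}{2}\end{pmatrix}
\end{equation*}
and $\mu\bmod m$ is the root at the top of $\gamma_{2;l}\bm{c}_{J_l}$. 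Since the diagonal factors commute, this lets me rewrite $\gamma=P\,\mathrm{diag}(\varepsilon_2,\varepsilon_2^{-1})\,P^{-1}$.

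Next I would compute the lower-left entry of $\gamma$ directly from this conjugation formula. With $\det P=\tfrac12 m\sqrt{D}$ and writing $\varepsilon_2=a+b\frac{1+\sqrt{D}}{2}$ so that $\varepsilon_2-\varepsilon_2^{-1}=\varepsilon_2-\overline{\varepsilon_2}=b\sqrt{D}$, the $(2,1)$-entry works out to
\begin{equation*}
  c=\frac{(m/2)^2}{\det P}\bigl(\varepsilon_2-\varepsilon_2^{-1}\bigr)=\frac{(m/2)^2}{\tfrac12 m\sqrt{D}}\,b\sqrt{D}=\frac{mb}{2}.
\end{equation*}
This is automatically an integer since $m$ is even and $\gamma\in\SL(2,\mathbb{Z})$, and thus $\gamma\in\Gamma_0(n)$ if and only if $n\mid\frac{mb}{2}$.

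Finally I would pin down $m$ modulo powers of $2$ and relate the parity of $b$ to $\varepsilon_1=\varepsilon_2$. Because $n$ is even we are necessarily in the branch of Lemma \ref{lm: I_k,nu,n} where both $n$ and $\frac{D-\nu^2}{n}$ are even, and there the modulus is produced as $m=\frac{m_0 n}{2}$ with $m_0\equiv 2\pmod 4$; hence $u:=m/n=\frac{m_0}{2}$ is odd. Consequently $\frac{mb}{2n}=\frac{ub}{2}$, so $n\mid\frac{mb}{2}$ if and only if $ub$, equivalently $b$, is even. On the other hand, the computation in the proof of Proposition \ref{prp: epsilon1 epsilon2} shows $\varepsilon_2\in\mathbb{Z}[\sqrt{D}]$ exactly when $b$ is even, and $\varepsilon_2\in\mathbb{Z}[\sqrt{D}]$ is equivalent to $\varepsilon_1=\varepsilon_2$ (if $\varepsilon_2\in\mathbb{Z}[\sqrt{D}]$ it is a totally positive unit there, so $\varepsilon_2$ and $\varepsilon_1$ are mutually powers of one another, forcing them to coincide). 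Combining these equivalences yields $\gamma\in\Gamma_0(n)\iff\varepsilon_1=\varepsilon_2$.

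The main obstacle I anticipate is not the matrix computation but the bookkeeping that feeds into it: one must be sure that the modulus $m$ attached to $\gamma_{2;l}$ really has $m/n$ odd, which means tracing back through the explicit ideal construction of Lemma \ref{lm: I_k,nu,n} and confirming we are in the ``both $n$ and $\frac{D-\nu^2}{n}$ even'' branch. The only genuine divisibility subtlety is then concentrated at the prime $2$, since odd primes are handled for free by $\tfrac n2\mid c$ from Lemma \ref{lm: gamma_0 I}; it is precisely the equality $v_2(m)=v_2(n)$ that makes the odd-$b$ case fail to lie in $\Gamma_0(n)$.
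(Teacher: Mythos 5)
Your proof is correct, and its computational core is the same as the paper's: both conjugate $\mathrm{diag}(\varepsilon_2,\varepsilon_2^{-1})$ by the matrix $P$, read off that the lower-left entry of $\gamma$ is $\frac{bm}{2}$ (the paper's \eqref{eq:epsilonconj}), and reduce the lemma to the divisibility $n \mid \frac{bm}{2}$, settled by the parity of $b$ together with the equivalence ``$b$ even $\iff \varepsilon_1 = \varepsilon_2$.''

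The one step where you genuinely diverge is the justification that $v_2(m)=v_2(n)$. You extract ``$m/n$ odd'' from the explicit ideal construction of Lemma \ref{lm: I_k,nu,n} (there $m=\frac{m_0 n}{2}$ with $m_0\equiv 2\pmod 4$), so your argument is tied to the particular representative $\gamma_{2;l}$ that this construction produces. The paper instead argues intrinsically, and only in the case where the fact is needed: if $\varepsilon_1=\varepsilon_2^3$ then $D\equiv 5\pmod 8$ by Proposition \ref{prp: epsilon1 epsilon2}, and then ``both $m$ and $\frac{D-\mu^2}{m}$ even'' forces $m\equiv 2\pmod 4$ (Lemma \ref{lm: D 5 mod 8}); since $n$ is even and $n\mid m$, this gives $n\equiv 2\pmod 4$ and hence $m\equiv n\pmod{2n}$. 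The intrinsic route buys independence of the choice of $\gamma_{2;l}$: when $D\equiv 1\pmod 8$, an admissible $\gamma_{2;l}$ (one whose top satisfies the congruence conditions) can perfectly well have $m/n$ even --- for instance $D=17$, $n=8$, $\nu=7$, with the root $\mu=7 \bmod m=16$ --- and then your intermediate equivalence ``$n\mid \frac{mb}{2}$ iff $b$ even'' is false as stated, since the left-hand side holds for every $b$. This does not damage your conclusion, because $b$ is automatically even when $D\equiv 1\pmod 8$, but it does mean your proof establishes the lemma only for the $\gamma_{2;l}$ coming from Lemma \ref{lm: I_k,nu,n}, whereas the paper's parity argument shows the dichotomy holds for every admissible choice, which is what the subsequent splitting discussion (Figures \ref{fig: cover 1} and \ref{fig: cover 2}) actually relies on.
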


Before proving this lemma, we remark that in view of Proposition \ref{prp: epsilon1 epsilon2}, $\varepsilon_1 = \varepsilon_2^3$ implies $D \equiv 5 \pmod 8$, which, since $n$ is even, so we may then assume $n \equiv 2 \pmod 4$.

\begin{proof}
  We let $\gamma$ denote the matrix in question, so
  \begin{equation}
    \label{eq:gammaeq}
    \gamma
    \begin{pmatrix}
      1 & \frac{\mu -1}{2} \\
      0 & \frac{m}{2}
    \end{pmatrix}
    \begin{pmatrix}
      \tfrac{1 + \sqrt{D}}{2} & \tfrac{1 - \sqrt{D}}{2} \\
      1 & 1
    \end{pmatrix}
    =
    \begin{pmatrix}
      1 & \frac{\mu -1}{2} \\
      0 & \frac{m}{2}
    \end{pmatrix}
    \begin{pmatrix}
      \tfrac{1 + \sqrt{D}}{2} & \tfrac{1 - \sqrt{D}}{2} \\
      1 & 1
    \end{pmatrix}
    \begin{pmatrix}
      \varepsilon_2 & 0 \\
      0 & \varepsilon_2^{-1}
    \end{pmatrix}
    ,
  \end{equation}
where $m \equiv 0 \pmod n$ and $\mu \equiv \nu \pmod n$. 
When $\varepsilon_1 = \varepsilon_2^3$, we must have $m \equiv n \pmod {2n}$. 
  We write $\varepsilon_2 = a + b \sqrt{D}$ and recall that $\varepsilon_1 = \varepsilon_2$ if $b$ is even and $\varepsilon_1 = \varepsilon_2^3$ if $b$ is odd.
  We have
  \begin{equation}
    \label{eq:epsilonconj}
    \begin{pmatrix}
      \tfrac{1 + \sqrt{D}}{2} & \tfrac{1 - \sqrt{D}}{2} \\
      1 & 1
    \end{pmatrix}
    \begin{pmatrix}
      \varepsilon_2 & 0 \\
      0 & \varepsilon_2^{-1}
    \end{pmatrix}
    \begin{pmatrix}
      \tfrac{1 + \sqrt{D}}{2} & \tfrac{1 - \sqrt{D}}{2} \\
      1 & 1
    \end{pmatrix}
    ^{-1} =
    \begin{pmatrix}
      * & * \\
      b & * 
    \end{pmatrix}
    ,
  \end{equation}
  so $\gamma =
    \begin{pmatrix}
      * & * \\
      \frac{bm}{2} & *
    \end{pmatrix}$
  and the lemma follows. 
\end{proof}

Putting the results of this section so far together, we obtain the following theorems.
Here we use the notation $\Gamma_{1;k}'$ for the stabilizer in $\Gamma_0(n)$ of the geodesic $\gamma_{1; k} \bm{c}_{I_k}$, so $\Gamma_{1;k}' = \gamma_{1;k} \Gamma_{1;k} \gamma_{1;k}^{-1}$, recalling the notation $\Gamma_{1;k}$ of section \ref{sec:geodesics}.
Similarly, we use $\Gamma_{2;l}' = \gamma_{2;l} \Gamma_{2;l} \gamma_{2;l}^{-1}$ to denote the stabilizer of $\gamma_{2;l} \bm{c}_{J_l}$ in $\Gamma_0(n)$ if $n$ is odd and in $\Gamma_0(\frac{n}{2})$ if $n$ is even. 

\begin{thm}
    Let $D>0$ be a square-free integer satisfying $D\equiv 1 \pmod 4$. Given a fixed positive integer $n$ and $\nu \bmod n$ with $\nu^2\equiv D \pmod n$, there exists a finite set of closed geodesics $\{\gamma_{1;1}\bm{c}_{I_1},\cdots,\gamma_{1;h_1}\bm{c}_{I_{h_1}}\}$ all with the same length in $\Gamma_0(n)\backslash\mathbb{H}$, having the following properties:
    \begin{enumerate}[(i)]
        \item For any positive integer $m$ and $\mu \bmod m$ satisfying $\mu^2\equiv D \pmod m$ with $m\equiv 0\pmod n$ and $\mu\equiv\nu\pmod n$ and with either $m$ or $\frac{D-\mu^2}{m}$ is odd, then there exists a unique $k$ and a double coset $\Gamma_\infty\gamma\Gamma_{1;k}'\in\Gamma_\infty\backslash\Gamma_0(n)\slash\Gamma_{1;k}'$ such that 
          \begin{equation}
            \label{eq:thmI}
            z_{\gamma\gamma_{1;k}\bm{c}_{I_k}}\equiv\frac{\mu}{m}+\mathrm{i}\frac{\sqrt{D}}{m} \pmod {\Gamma_\infty}.
        \end{equation}
        \item Conversely, given $k$ and a double coset $\Gamma_\infty\gamma\Gamma_{1;k}'\in\Gamma_\infty\backslash\Gamma_0(n)\slash\Gamma_{1;k}'$ with positively oriented $\gamma\gamma_{1;k}\bm{c}_{I_k}$, there exists a unique positive integer $m$ and a residue class $\mu \bmod m$ satisfying \eqref{eq:thmI} with $\mu^2 \equiv D \pmod m$, $m\equiv 0 \pmod n$, $\mu \equiv \nu \pmod n$ and either $m$ or $\frac{D-\mu^2}{m}$ being odd.
    \end{enumerate}
    \label{thm: congruence subgroups I}
  \end{thm}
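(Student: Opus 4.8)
The plan is to bootstrap from the $\SL(2,\mathbb{Z})$-level statement in Proposition \ref{prp: congruence I SL(2,Z)}, using the ideal $I_{k,\nu,n}$ supplied by Lemma \ref{lm: I_k,nu,n} to pin down the reference geodesics $\gamma_{1;k}\bm{c}_{I_k}$, and then to transfer the double-coset bijection from $\Gamma = \SL(2,\mathbb{Z})$ down to $\Gamma_0(n)$ by means of Lemmas \ref{lm: invariant}, \ref{lm: congruence subgroup I} and \ref{lm: gamma_0 I}. First I would fix the geodesics: by Lemma \ref{lm: I_k,nu,n}, for each $1\le k \le h_1^+(D)$ there is an ideal $I_{k,\nu,n}$ in the narrow class of $I_k$ whose basis has the form \eqref{eq: beta basis} with $m\equiv 0\pmod n$, $\mu\equiv\nu\pmod n$ and either $m$ or $\frac{D-\mu^2}{m}$ odd. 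Writing this basis as $\gamma_{1;k}\mathfrak{B}_k\,\mathrm{diag}(\xi,\overline{\xi})$ as in Proposition \ref{prp: B_k} defines $\gamma_{1;k}\in\Gamma$, and Proposition \ref{prp: congruence I SL(2,Z)} then identifies the top $z_{\gamma_{1;k}\bm{c}_{I_k}}$ with $\frac{\mu}{m}+\mathrm{i}\frac{\sqrt{D}}{m}$ subject to these congruences. That all of these geodesics share one length in $\Gamma_0(n)\backslash\mathbb{H}$ follows from Lemma \ref{lm: gamma_0 I}: the generator $\gamma_{1;k}\mathfrak{B}_k\,\mathrm{diag}(\varepsilon_1,\varepsilon_1^{-1})\mathfrak{B}_k^{-1}\gamma_{1;k}^{-1}$ of $\gamma_{1;k}\Gamma_{1;k}\gamma_{1;k}^{-1}$ already lies in $\Gamma_0(n)$, so in fact $\Gamma_{1;k}' = \gamma_{1;k}\Gamma_{1;k}\gamma_{1;k}^{-1}$ (that is, $j_1=1$) and each geodesic has length $2\log\varepsilon_1$.

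For existence in (i), given a root $\mu\bmod m$ with the stated conditions, Proposition \ref{prp: congruence I SL(2,Z)} produces a unique $k$ and some $\gamma'\in\Gamma$ with $z_{\gamma'\bm{c}_{I_k}}\equiv\frac{\mu}{m}+\mathrm{i}\frac{\sqrt{D}}{m}\pmod{\Gamma_\infty}$. I would set $\gamma=\gamma'\gamma_{1;k}^{-1}$ and argue $\gamma\in\Gamma_0(n)$. Indeed, the roots attached to $z_{\gamma_{1;k}\bm{c}_{I_k}}$ and to $z_{\gamma'\bm{c}_{I_k}}=z_{\gamma\gamma_{1;k}\bm{c}_{I_k}}$ both satisfy the congruence conditions and, coming from the same class $I_k$, lie in the same narrow class; hence there is a totally positive $\xi$ realizing the change of basis \eqref{eq: congruence subgroup I}, and Lemma \ref{lm: congruence subgroup I} yields precisely $\gamma\in\Gamma_0(n)$, which proves \eqref{eq:thmI}.

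For uniqueness in (i), the value of $k$ is forced by Proposition \ref{prp: congruence I SL(2,Z)}; and if $\gamma,\tilde\gamma\in\Gamma_0(n)$ give the same top modulo $\Gamma_\infty$, then the $\Gamma$-uniqueness of that proposition gives $\gamma\gamma_{1;k}=p\,\tilde\gamma\gamma_{1;k}\,q$ with $p\in\Gamma_\infty$ and $q\in\Gamma_{1;k}$, whence $\gamma = p\,\tilde\gamma\,(\gamma_{1;k}q\gamma_{1;k}^{-1})$; since $\Gamma_\infty\subseteq\Gamma_0(n)$ and $\gamma_{1;k}q\gamma_{1;k}^{-1}\in\Gamma_{1;k}'\subseteq\Gamma_0(n)$, this shows $\Gamma_\infty\gamma\Gamma_{1;k}'=\Gamma_\infty\tilde\gamma\Gamma_{1;k}'$. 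For the converse (ii), given $k$ and a double coset with $\gamma\gamma_{1;k}\bm{c}_{I_k}$ positively oriented, the converse half of Proposition \ref{prp: congruence I SL(2,Z)} applied to $\gamma\gamma_{1;k}\in\Gamma$ furnishes a unique $m$ and $\mu\bmod m$ with $m$ or $\frac{D-\mu^2}{m}$ odd satisfying \eqref{eq:thmI}; the congruences $m\equiv 0\pmod n$ and $\mu\equiv\nu\pmod n$ then follow from Lemma \ref{lm: invariant}(i), since the reference root at $z_{\gamma_{1;k}\bm{c}_{I_k}}$ satisfies them and $\gamma\in\Gamma_0(n)$ preserves them.

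The main obstacle I anticipate is the transfer step itself: ensuring that the $\Gamma$-level double-coset bijection restricts cleanly to a $\Gamma_0(n)$-level bijection. This hinges on three compatibility facts already isolated above — that $\Gamma_{1;k}'$ is the full conjugate $\gamma_{1;k}\Gamma_{1;k}\gamma_{1;k}^{-1}$ rather than a proper subgroup (Lemma \ref{lm: gamma_0 I}), that ``same $\SL(2,\mathbb{Z})$-orbit together with the congruence conditions $\Rightarrow$ same $\Gamma_0(n)$-orbit'' (Lemma \ref{lm: congruence subgroup I}), and that the congruence conditions are themselves $\Gamma_0(n)$-invariant (Lemma \ref{lm: invariant}). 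Once these are in hand the bijection is essentially formal, and the remaining care lies purely in the $\Gamma_\infty$- and $\Gamma_{1;k}'$-coset bookkeeping.
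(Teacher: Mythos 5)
Your proposal is correct and takes essentially the same route as the paper: Theorem \ref{thm: congruence subgroups I} is obtained there by assembling exactly the ingredients you cite, namely Lemma \ref{lm: I_k,nu,n} to construct the reference geodesics $\gamma_{1;k}\bm{c}_{I_k}$, Lemma \ref{lm: gamma_0 I} for the common length $2\log\varepsilon_1$ (i.e.\ $j_1=1$, so $\Gamma_{1;k}'=\gamma_{1;k}\Gamma_{1;k}\gamma_{1;k}^{-1}\subseteq\Gamma_0(n)$), Lemma \ref{lm: congruence subgroup I} to upgrade $\SL(2,\mathbb{Z})$-equivalence of tops satisfying the congruence conditions to $\Gamma_0(n)$-equivalence, and Lemma \ref{lm: invariant} together with Proposition \ref{prp: congruence I SL(2,Z)} for the converse direction. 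The double-coset bookkeeping you spell out is precisely the implicit content of the paper's ``putting the results of this section so far together'' step, so your write-up fills in the same argument rather than an alternative one.
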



\begin{thm}
    Let $D>0$ be a square-free integer satisfying $D\equiv 1 \pmod 4$. Given a fixed positive integer $n$ and $\nu \bmod n$ with $\nu^2\equiv D \pmod n$, there exists a finite set of closed geodesics $\{\gamma_{2;1}\bm{c}_{J_1},\cdots,\gamma_{2;h_2}\bm{c}_{J_{h_2}}\}$, having the following properties:
    \begin{enumerate}[(i)]
        \item For any positive integer $m$ and $\mu \bmod m$ satisfying $\mu^2\equiv D \pmod m$ with $m\equiv 0\pmod n$ and $\mu\equiv\nu\pmod n$ and with $m$ and $\frac{D-\mu^2}{m}$ both even, then there exists a unique $l$ and a double coset $\Gamma_\infty\gamma\Gamma_{2;l}'\in\Gamma_\infty\backslash\Gamma_0(n)\slash\Gamma_{2;l}'$ if $n$ is odd or in ${\Gamma_\infty\backslash\Gamma_0(\frac{n}{2})\slash\Gamma_{2;l}'}$ if $n$ is even such that 
          \begin{equation}
            \label{eq:thmJ}
            z_{\gamma\gamma_{2;l}\bm{c}_{J_l}}\equiv\frac{\mu}{m}+\mathrm{i}\frac{\sqrt{D}}{m} \pmod {\Gamma_\infty}.
        \end{equation}
        \item Conversely, given $l$ and a double coset $\Gamma_\infty\gamma\Gamma_{2;l}'\in\Gamma_\infty\backslash\Gamma_0(n)\slash\Gamma_{2;l}'$ if $n$ is odd or $\Gamma_\infty\backslash\Gamma_0(\frac{n}{2})\slash\Gamma_{2;l}'$ if $n$ is even such that $\gamma\gamma_{2;l}\bm{c}_{J_l}$ is positively oriented, there exists a unique positive integer $m$ and a residue class $\mu \bmod m$ satisfying \eqref{eq:thmJ} with $\mu^2 \equiv D \pmod m$, $m\equiv 0 \pmod n$, $\mu \equiv \nu \pmod n$ and both $m$ and $\frac{D-\mu^2}{m}$ being even.
    \end{enumerate}
    \label{thm: congruence subgroups J}\end{thm}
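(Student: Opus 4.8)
The plan is to assemble Theorem \ref{thm: congruence subgroups J} from the $\SL(2,\mathbb{Z})$-level correspondence of Proposition \ref{prp: congruence J SL(2,Z)} together with the congruence-invariance machinery developed in Lemmas \ref{lm: invariant}, \ref{lm: I_k,nu,n}, \ref{lm: congruence subgroup J} and \ref{lm: gamma_0 I}, exactly mirroring the construction behind Theorem \ref{thm: congruence subgroups I} but now working with ideals in the maximal order $\mathbb{Z}[\frac{1+\sqrt{D}}{2}]$ rather than with invertible ideals in $\mathbb{Z}[\sqrt{D}]$. First I would use Lemma \ref{lm: I_k,nu,n} to produce, for each $1\leq l\leq h_2^+(D)$, an ideal $J_{l,\nu,n}$ equivalent to $J_l$ in the narrow class group whose Hermite basis \eqref{eq: delta basis} encodes a root $\mu\bmod m$ with $m\equiv 0\pmod n$, $\mu\equiv\nu\pmod n$, and both $m$ and $\frac{D-\mu^2}{m}$ even. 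Running the argument of Propositions \ref{prp: D_l} and \ref{prp: congruence J SL(2,Z)} on this ideal produces the element $\gamma_{2;l}\in\Gamma$ with $z_{\gamma_{2;l}\bm{c}_{J_l}}\equiv\frac{\mu}{m}+\mathrm{i}\frac{\sqrt{D}}{m}\pmod{\Gamma_\infty}$; these $h_2^+(D)$ geodesics constitute the asserted finite set.

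Next I would establish that each $\gamma_{2;l}\bm{c}_{J_l}$ projects to a closed geodesic in $\Gamma_0(n)\backslash\mathbb{H}$ (for $n$ odd) or $\Gamma_0(\frac{n}{2})\backslash\mathbb{H}$ (for $n$ even). This is immediate from Lemma \ref{lm: gamma_0 I}, which shows that the generator $\gamma_{2;l}\mathfrak{D}_l\operatorname{diag}(\varepsilon_2,\varepsilon_2^{-1})\mathfrak{D}_l^{-1}\gamma_{2;l}^{-1}$ of the full stabilizer $\Gamma_{2;l}'=\gamma_{2;l}\Gamma_{2;l}\gamma_{2;l}^{-1}$ already lies in the relevant congruence subgroup, so the geodesic is stabilized there and hence closed.

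For part (i), given a root $\mu\bmod m$ satisfying the hypotheses, Proposition \ref{prp: congruence J SL(2,Z)} supplies a unique index $l$ and a $\Gamma$-double coset with $z_{\tilde\gamma\bm{c}_{J_l}}\equiv\frac{\mu}{m}+\mathrm{i}\frac{\sqrt{D}}{m}$. Since $\gamma_{2;l}$ was built from the same narrow class $J_l$, the ideals $J_1=J_{l,\nu,n}$ and $J_2$ (the ideal attached to $\mu\bmod m$) satisfy $\zeta J_1=J_2$ for some totally positive $\zeta$, which is precisely the hypothesis \eqref{eq: congruence subgroup J} of Lemma \ref{lm: congruence subgroup J}. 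That lemma then delivers $\gamma\in\Gamma_0(n)$ (if $n$ is odd) or $\gamma\in\Gamma_0(\frac{n}{2})$ (if $n$ is even) with $\gamma\gamma_{2;l}\bm{c}_{J_l}$ having the prescribed top, yielding the double coset $\Gamma_\infty\gamma\Gamma_{2;l}'$. Uniqueness follows as in Proposition \ref{prp: D_l}: the residue class $\mu\bmod m$ pins down $\gamma$ in $\Gamma_\infty\backslash\Gamma_0(n)$ (resp. $\Gamma_\infty\backslash\Gamma_0(\frac{n}{2})$), while the freedom of multiplying $\zeta$ by totally positive units of $\mathbb{Z}[\frac{1+\sqrt{D}}{2}]$ is absorbed into the right coset $\gamma\Gamma_{2;l}'$.

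For the converse (ii), a double coset with $\gamma\gamma_{2;l}\bm{c}_{J_l}$ positively oriented has a well-defined top, which by Proposition \ref{prp: congruence J SL(2,Z)} equals $\frac{\mu}{m}+\mathrm{i}\frac{\sqrt{D}}{m}$ for a unique positive integer $m$ and residue $\mu\bmod m$ with $\mu^2\equiv D\pmod m$ and both $m,\frac{D-\mu^2}{m}$ even. That this root also satisfies $m\equiv 0\pmod n$ and $\mu\equiv\nu\pmod n$ is exactly the invariance statement of Lemma \ref{lm: invariant}(ii), applied to $\gamma$ in the appropriate congruence subgroup and the base root coming from $\gamma_{2;l}$. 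I expect the main obstacle to be organizational rather than computational: keeping the dichotomy between $n$ odd and $n$ even consistent across Lemmas \ref{lm: invariant}(ii), \ref{lm: congruence subgroup J} and \ref{lm: gamma_0 I}. In particular, Lemma \ref{lm: invariant}(ii) splits on the parity of $\frac{D-\nu^2}{n}$ whereas Lemmas \ref{lm: congruence subgroup J} and \ref{lm: gamma_0 I} split only on the parity of $n$, so I would have to check that the sub-case ``$n$ even and $\frac{D-\nu^2}{n}$ odd'' is vacuous under the standing congruence constraints (as already noted inside the proof of Lemma \ref{lm: invariant}(iia), where $m\equiv 0\pmod n$ and $\mu\equiv\nu\pmod n$ force $n$ odd), so that the group selected by each lemma agrees and the maps in (i) and (ii) are genuinely mutually inverse.
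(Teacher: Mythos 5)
Your proposal is correct and follows essentially the same route as the paper: the paper gives no standalone proof of Theorem \ref{thm: congruence subgroups J}, stating only that it follows by ``putting the results of this section so far together,'' and your assembly --- Lemma \ref{lm: I_k,nu,n} for the base geodesics $\gamma_{2;l}\bm{c}_{J_l}$, Proposition \ref{prp: congruence J SL(2,Z)} for the $\SL(2,\mathbb{Z})$-level correspondence, Lemma \ref{lm: congruence subgroup J} to descend to $\Gamma_0(n)$ or $\Gamma_0(\frac{n}{2})$, Lemma \ref{lm: invariant}(ii) for the converse direction, and Lemma \ref{lm: gamma_0 I} for closedness --- is precisely that intended combination. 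Your parity worry is also resolved exactly as the paper does it: the sub-case ``$n$ even, $\frac{D-\nu^2}{n}$ odd'' is vacuous under the congruence constraints, as noted inside the proof of Lemma \ref{lm: invariant}(iia).
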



To combine Theorems \ref{thm: congruence subgroups I} and \ref{thm: congruence subgroups J}, we break the $\Gamma_0(\frac{n}{2})$-orbits appearing in Theorem \ref{thm: congruence subgroups J} when $n$ is even into a union of orbits under the subgroup $\Gamma_0(n)$.
In this way, we obtain a uniform acting group, $\Gamma_0(n)$, as stated in Theorem \ref{thm: congruence subgroups} and as needed to apply the results of \cite{misc:Marklof-Welsh}. 

We first look at the index $\left[\Gamma_0(\frac{n}{2}):\Gamma_0(n)\right]$.
The well-known formula
\begin{equation}
    \left[\SL(2,\mathbb{Z}):\Gamma_0(n)\right]=n\prod_{p\mid n}\left(1+\frac{1}{p}\right)
\end{equation}
implies
\begin{equation}
    \left[\Gamma_0\!\left(\tfrac{n}{2}\right):\Gamma_0(n)\right]=
    \begin{cases}
        2 & \text{if } n \equiv 0 \pmod 4 \\
        3 & \text{if } n \equiv 2 \pmod 4.
    \end{cases}
\end{equation} 
In the case that $n\equiv 0 \pmod 4$, we have distinct cosets $\Gamma_0(n)$ and $\Gamma_0(n)\gamma_0'$, say,
and in the case
that $n \equiv 2 \pmod 4$, we include $\Gamma_0(n)\gamma_0''$ as the third coset.

Given the geodesic $\gamma_{2;l}\bm{c}_{J_l}$, one may ask whether the geodesics $\gamma_{2;l}\bm{c}_{J_l}, \gamma_0'\gamma_{2;l}\bm{c}_{J_l}$ and $\gamma_0''\gamma_{2;l}\bm{c}_{J_l}$ are $\Gamma_0(n)$-equivalent.
Geometrically, this is asking about the splitting behaviour of the geodesic $\gamma_{2;l}\bm{c}_{J_l}$ as it lifts from the surface $\Gamma_0(\frac{n}{2}) \backslash \mathbb{H}$ to the surface $\Gamma_0(n) \backslash \mathbb{H}$, see figures \ref{fig: cover 1} and \ref{fig: cover 2}.


\begin{figure}
    \centering
\begin{tikzpicture}
    \draw[->] (-8em,0.5em) -- (-8em,-0.5em);
    \draw (-8em,3em) ellipse (1em and 0.5em);
    \draw (-8em,1.5em) ellipse (1em and 0.5em);
    \draw (-8em,-1.5em) ellipse (1em and 0.5em);
    
    \node at (-16em,2.25em) {$\Gamma_0(n)\backslash\mathbb{H}$ :};
    \draw[->] (-16em,0.875em) -- (-16em,-0.125em);
    \node at (-16em,-1.5em) {$\Gamma_0(\frac{n}{2})\backslash\mathbb{H}$ :};

    \draw[->] (0em,0.5em) -- (0em,-0.5em);
    \draw[dashed] (0em,2.75em) ellipse (1em and 0.5em);
    \draw[dashed] (0em,1.75em) ellipse (1em and 0.5em);
    \draw[dashed] (0em,-1.5em) ellipse (1em and 0.5em);
    
    \node at (-8em,-3.5em) {(a) totally split};
    \node at (0em,-3.5em) {(b) inert};
\end{tikzpicture}
\caption{A mapping from $\Gamma_0(n)\backslash\mathbb{H}$ to $\Gamma_0(\frac{n}{2})\backslash\mathbb{H}$ when $n\equiv 0\pmod 4$. (a) Two single geodesics are projected to a single geodesic with length $2\log \varepsilon_2$. (b) A double geodesic is projected to a single geodesic,
}
\label{fig: cover 1}\end{figure}

Let $\gamma_s=\gamma_{2;l}\mathfrak{D}_l\begin{pmatrix} \varepsilon_2 & 0 \\ 0 & \varepsilon_2^{-1} \end{pmatrix}\mathfrak{D}_l^{-1}\gamma_{2;l}^{-1}$ be the generator of the stabilizer of $\gamma_{2;l}\bm{c}_{J_l}$ in $\Gamma_0(\frac{n}{2})$.
Considering $n\equiv 0\pmod 4$ first, we have $\gamma_{s;l}^2 \in \Gamma_0(n)$, and $\gamma_{2;l} \bm{c}_{J_l}$ is $\Gamma_0(n)$-equivalent to $\gamma_0' \gamma_{2;l} \bm{c}_{J_l}$ if and only if $\gamma_s \not\in \Gamma_0(n)$.
Geometrically, if $\gamma_s \in \Gamma_0(n)$, then the geodesic in $\Gamma_0(\frac{n}{2}) \backslash \mathbb{H}$ lifts to two geodesics in $\Gamma_0(n) \backslash \mathbb{H}$, case (a) in Figure \ref{fig: cover 1}, and if $\gamma_s \not\in \Gamma_0(n)$ then the geodesic lifts to a single geodesic, case (b) in Figure \ref{fig: cover 1}.
However, in this case ($n \equiv 0 \pmod 4$) we may assume $D \equiv 1 \pmod 8$, and so by Lemma \ref{lm:Gamma0n} and Proposition \ref{prp: epsilon1 epsilon2}, $\gamma_s \in \Gamma_0(n)$ and so case (b) does not occur. 

Now if $n \equiv 2 \pmod 4$, using Proposition \ref{prp: epsilon1 epsilon2} and the same calculation as in the proof of Lemma \ref{lm:Gamma0n} with $\varepsilon_2$ replaced by $\varepsilon_2^3$, we have $\gamma_s^3 \in \Gamma_0(n)$.
These calculations show in addition that either $\gamma_s \in \Gamma_0(n)$ or $\gamma_s, \gamma_s^2 \not\in \Gamma_0(n)$.
The first case corresponds to case (c) in Figure \ref{fig: cover 2} and the second corresponds to case (d), and we note that more complicated behaviour, like that illustrated in case (e), does not occur.
The cases (c) and (d) correspond to $\varepsilon_1 = \varepsilon_2$ and $\varepsilon_1 = \varepsilon_2^3$.

\begin{figure}
    \centering
\begin{tikzpicture}
    \draw[->] (-8em,0.5em) -- (-8em,-0.5em);
    \draw (-8em,4.5em) ellipse (1em and 0.5em);
    \draw (-8em,3em) ellipse (1em and 0.5em);
    \draw (-8em,1.5em) ellipse (1em and 0.5em);
    \draw (-8em,-1.5em) ellipse (1em and 0.5em);
    \node at (-16em,3em) {$\Gamma_0(n)\backslash\mathbb{H}$ :};
    \draw[->] (-16em,1.25em) -- (-16em,0.25em);
    \node at (-16em,-1.5em) {$\Gamma_0(\frac{n}{2})\backslash\mathbb{H}$ :};

    \draw[->] (0em,0.5em) -- (0em,-0.5em);
    \draw (0em,4em) ellipse (1em and 0.5em);
    \draw (0em,3em) ellipse (1em and 0.5em);
    \draw (0em,2em) ellipse (1em and 0.5em);
    \draw (0em,-1.5em) ellipse (1em and 0.5em);
    
    \draw[->] (8em,0.5em) -- (8em,-0.5em);
    \draw[dashed] (8em,4.5em) ellipse (1em and 0.5em);
    \draw[dashed] (8em,3em) ellipse (1em and 0.5em);
    \draw[dashed] (8em,2em) ellipse (1em and 0.5em);
    \draw[dashed] (8em,-1.5em) ellipse (1em and 0.5em);
    
    \node at (-8em,-3.5em) {(c) totally split};
    \node at (0em,-3.5em) {(d) inert};
    \node at (8em,-3.5em) {(e) split};
\end{tikzpicture}
\caption{A mapping from $\Gamma_0(n)\backslash\mathbb{H}$ to $\Gamma_0(\frac{n}{2})\backslash\mathbb{H}$ when $n\equiv 2\pmod 4$. (c) Three single geodesics are projected to a single geodesic with length $2\log\varepsilon_2$. (d) A triple geodesic with length $2\log\varepsilon_2^3$ is projected to a single geodesic. (e) A single geodesic and a double geodesic are projected to a single geodesic. 
}
\label{fig: cover 2}\end{figure}

We now combine Theorems \ref{thm: congruence subgroups I} and \ref{thm: congruence subgroups J} to prove our main result, Theorem \ref{thm: congruence subgroups}.

\begin{proof}[Proof of Theorem \ref{thm: congruence subgroups}] Define the number
\begin{equation}
    s=\begin{cases}
        1 & \text{if $D\equiv 5\pmod 8$ and $\varepsilon_1=\varepsilon_2^3$,} \\
        2 & \text{if $D\equiv 1\pmod 8$ and $n\equiv 0\pmod 4$,}\\
        3 & \text{otherwise.}
    \end{cases}
\end{equation}
Let $\{\bm{c}_1,\cdots,\bm{c}_{h}\}$ be a finite set of closed geodesics where $h=h_1+sh_2$. If $s=1$, then set
\begin{equation}
    \bm{c}_k=\begin{cases}
          \gamma_{1;k}\bm{c}_{I_k} & 1\leq k\leq h_1, \\
          \gamma_{2;k-h_1}\bm{c}_{J_{k-h_1}} & h_1+1\leq k \leq h_1+h_2.
    \end{cases}
\end{equation}
If $s=2$, then $\Gamma_0(\frac{n}{2})=\Gamma_0(n)\cup\Gamma_0(n)\gamma_0'$ for some $\gamma_0'\in\Gamma_0(\frac{n}{2})$ and set
\begin{equation}
    \bm{c}_k=\begin{cases}
          \gamma_{1;k}\bm{c}_{I_k} & 1\leq k\leq h_1, \\
          \gamma_{2;k-h_1}\bm{c}_{J_{k-h_1}} & h_1+1\leq k \leq h_1+h_2, \\
          \gamma_0'\gamma_{2;k-h_1-h_2}\bm{c}_{J_{k-h_1-h_2}} & h_1+h_2+1\leq k \leq h_1+2h_2.
    \end{cases}
\end{equation}
Otherwise $s=3$, we have $\Gamma_0(\frac{n}{2})=\Gamma_0(n)\cup\Gamma_0(n)\gamma_0'\cup\Gamma_0(n)\gamma_0''$ for some $\gamma_0', \gamma_0''\in\Gamma_0(\frac{n}{2})$ and set
\begin{equation}
    \bm{c}_k=\begin{cases}
          \gamma_{1;k}\bm{c}_{I_k} & 1\leq k\leq h_1, \\
          \gamma_{2;k-h_1}\bm{c}_{J_{k-h_1}} & h_1+1\leq k \leq h_1+h_2, \\
          \gamma_0'\gamma_{2;k-h_1-h_2}\bm{c}_{J_{k-h_1-h_2}} & h_1+h_2+1\leq k \leq h_1+2h_2, \\
          \gamma_0''\gamma_{2;k-h_1-2h_2}\bm{c}_{J_{k-h_1-2h_2}} & h_1+2h_2+1\leq k \leq h_1+3h_2.
    \end{cases}
\end{equation}
Define 
\begin{equation}
    \Gamma_k=\begin{cases}
        \Gamma_{1;k}' & 1\leq k \leq h_1, \\
        \Gamma_{2;k-h_1 -s'h_2}' & h_1 + s'h_2 +1\leq k \leq h_1+sh_2, 0\leq s' < s.
    \end{cases}
\end{equation}
The claims in Theorem \ref{thm: congruence subgroups} now follow from Theorems \ref{thm: congruence subgroups I} and \ref{thm: congruence subgroups J}.
\end{proof}

\bigskip

\section{Pair correlation density}
\label{sec:density}

We recall from theorem $4.10$ in \cite{misc:Marklof-Welsh} that the pair correlation density is given by the even and continuous function
\begin{equation}
    w(v)=\frac{1}{\mathrm{vol}_{\mathbb{H}}(\Gamma \backslash \mathbb{H})}\sum_{k,l=1}^h\sum_{\substack{\gamma\in\Gamma_{\bm{c}_k}\backslash\Gamma\slash\Gamma_{\bm{c}_l} \\ \gamma\bm{c}_l \neq \bm{c}_k, \overline{\bm{c}}_k}} \frac{1}{v^2} H_{\sgn(g_k^{-1}\gamma g_l(0))}(q,v/\kappa_\Gamma),
\label{eq: W(v)}\end{equation}
where $\kappa_\Gamma$ is the total length of the geodesics divided by $2\pi \mathrm{vol}_{\mathbb{H}}(\Gamma \backslash \mathbb{H})$ and $q = q(\gamma, l_1, l_2) = \frac{r + 1}{r-1}$ with $r$ the cross-ratio
\begin{equation}
  \label{eq:cross-ratio}
  r = \frac{ ((\gamma \bm{c}_{l_2})^+ - c_{l_1}^{-}) ((\gamma \bm{c}_{l_2})^- - \bm{c}_{l_1}^+)}{((\gamma \bm{c}_{l_2})^+ - \bm{c}_{l_1}^+)((\gamma \bm{c}_{l_2})^- - \bm{c}_{l_1}^-)},
\end{equation}
and
\begin{equation}
    H_+(q,v)=\begin{cases}
        0 & \text{if } q<-1 \\
        0 & \text{if } |q|<1 \text{ and } v<\sqrt{2-2q}\\
        h_q(s_1(q,v))-h_q(s_2(q,v)) & \text{if } |q|<1 \text{ and } v>\sqrt{2-2q}\\
        h_q(s_1(q,v))-h_q(-q+\sqrt{q^2-1}) & \text{if } q>1,
    \end{cases}
\end{equation}
and
\begin{equation}
    H_-(q,v)=\begin{cases}
        0 & \text{if } q<-1 \text{ and } |v|<\sqrt{2-2q} \\
        h_q(s_1(q,v))-h_q(s_2(q,v)) & \text{if } q<-1 \text{ and } |v|>\sqrt{2-2q}\\
        h_q(s_1(q,v))-h_q(s_2(q,v)) & \text{if } |q|<1 \text{ and } v<-\sqrt{2-2q}\\
        0 & \text{if } |q|<1 \text{ and } v>-\sqrt{2-2q} \\
        h_q(-q-\sqrt{q^2-1})-h_q(s_2(q,v)) & \text{if } q>1,
    \end{cases}
\end{equation}
with
\begin{equation}
    y(q,v)=\sqrt{v^2+q^2-1},
\end{equation}
\begin{equation}
    s_1(q,v)=\frac{-q+y(q,v)}{v+1}, \qquad s_2(q,v)=v-q-y(q,v),
\end{equation}
and
\begin{equation}
    h_q(s)=\log\frac{s+q}{1-s^2}.
\label{eq: h_q}\end{equation}

We can simplify $H_{\pm}(q,v)$ as follows.
Starting with
\begin{equation}
    h_q(s_1(q,v))=\log\frac{\left(v+1\right)\left(vq+y(q,v)\right)}{2\left(qy(q,v)-q^2+v+1\right)},
\label{eq: h_q(s1)}\end{equation}
we multiply the numerator and the denominator by $qy(q,v)+q^2-v-1$, which yields
\begin{equation}
    \left(qy(q,v)-q^2+v+1\right)\left(qy(q,v)+q^2-v-1\right)=\left(q^2-1\right)\left(v+1\right)^2,
\end{equation}
and
\begin{equation}
    \left(vq+y(q,v)\right)\left(qy(q,v)+q^2-v-1\right)=\left(q^2-1\right)\left(v+1\right)\left(y(q,v)+q\right).
\end{equation}
We obtain
\begin{equation}
    h_q(s_1(q,v))=\log\frac{y(q,v)+q}{2},
\end{equation}
which is clearly an even function.

As
\begin{equation}
    h_q(s_2(q,v))=\log\frac{v-y(q,v)}{2\left((v-q)y(q,v)-(v-q)^2-vq+1\right)},
\label{eq: 1-s_2^2}\end{equation}
we multiply by the factor $(v-q)y(q,v)+(v-q)^2+vq-1$, which gives
\begin{equation}
    \left(1-s_2^2(q,v)\right)\left((v-q)y(q,v)+(v-q)^2+vq-1\right)=2\left(1-q^2\right)\left(v^2-1\right),
\end{equation}
and
\begin{equation}
    \left(v-y(q,v)\right)\left((v-q)y(q,v)+(v-q)^2+vq-1\right)=\left(1-q^2\right)\left(y(q,v)-q\right).
\end{equation}
Therefore,
\begin{equation}
    h_q(s_2(q,v))=\log\frac{y(q,v)-q}{2\left(v^2-1\right)},
\end{equation}
which is an even function as well.

Now
\begin{equation}
\begin{aligned}
    h_q(s_1(q,v))-h_q(s_2(q,v))&=\log\frac{\left(y(q,v)+q\right)\left(v^2-1\right)}{y(q,v)-q} \\
    &=2\log(q+\sqrt{v^2+q^2-1})
\end{aligned}
\end{equation}
and
\begin{equation}
    h_q(-q\pm\sqrt{q^2-1})=\log\frac{q\pm\sqrt{q^2-1}}{2},
\end{equation}
so we can rewrite
\begin{equation}
    H_+(q,v)=\begin{cases}
        0 & \text{if } q<-1 \\
        0 & \text{if } |q|<1 \text{ and } v<\sqrt{2-2q}\\
        2\log(q+\sqrt{v^2+q^2-1}) & \text{if } |q|<1 \text{ and } v>\sqrt{2-2q}\\
        \log((q+\sqrt{v^2+q^2-1})(q-\sqrt{q^2-1})) & \text{if } q>1,
    \end{cases}
\end{equation}
and
\begin{equation}
    \!H_-(q,v)=\begin{cases}
        0 & \text{if } q<-1 \text{ and } |v|<\sqrt{2-2q} \\
        2\log(q+\sqrt{v^2+q^2-1}) & \text{if } q<-1 \text{ and } |v|>\sqrt{2-2q}\\
        0 & \text{if } |q|<1 \text{ and } v>-\sqrt{2-2q} \\
        2\log(q+\sqrt{v^2+q^2-1}) & \text{if } |q|<1 \text{ and } v<-\sqrt{2-2q}\\
        \log((q+\sqrt{v^2+q^2-1})(q-\sqrt{q^2-1})) & \text{if } q>1.
    \end{cases}
  \end{equation}

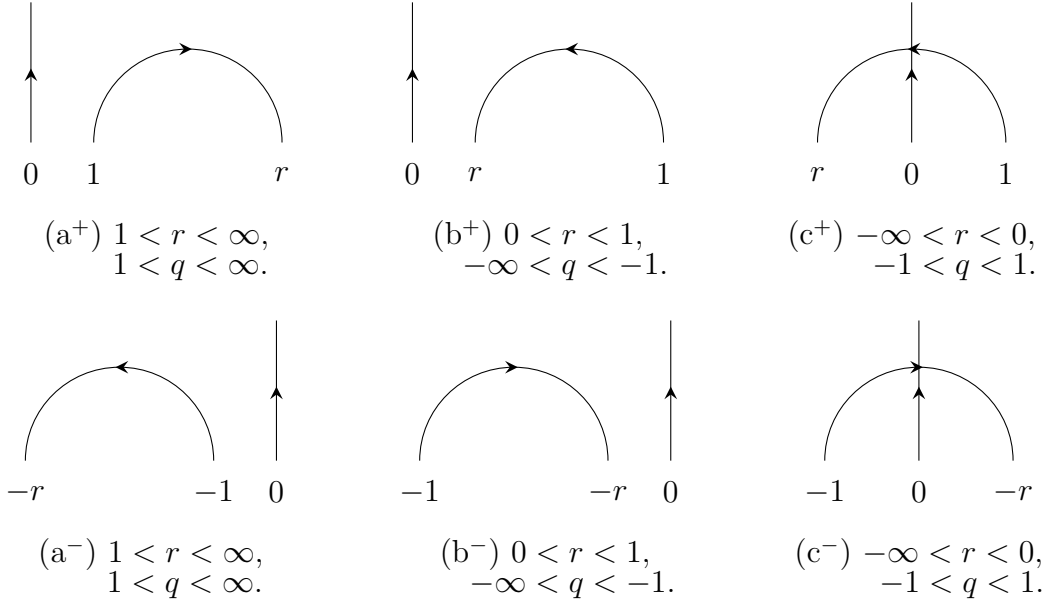
\begin{figure}[htb]
\centering
\subfigure{
\begin{tikzpicture}
    \draw (0em,0em) -- (0em,4.5em) node[scale=1.5,sloped,pos=0.5,allow upside down]{\arrowIn};
    \draw (2em,0em) arc (180:0:3em) node[scale=1.5,sloped,pos=0.5,allow upside down]{\arrowIn};
    \node at (0em,-1em) {$0$};
    \node at (2em,-1em) {$1$};
    \node at (8em,-1em) {$r$};
    \node at (4em,-3em) {(a$^+$) $1<r<\infty,$};
    \node at (4em,-4em) {$\qquad\, 1<q<\infty.$};
\end{tikzpicture}
}
\qquad
\subfigure{
\begin{tikzpicture}
    \draw (0em,0em) -- (0em,4.5em) node[scale=1.5,sloped,pos=0.5,allow upside down]{\arrowIn};
    \draw (8em,0em) arc (0:180:3em) node[scale=1.5,sloped,pos=0.5,allow upside down]{\arrowIn};
    \node at (0em,-1em) {$0$};
    \node at (2em,-1em) {$r$};
    \node at (8em,-1em) {$1$};
    \node at (4em,-3em) {(b$^+$) $0<r<1,$};
    \node at (4em,-4em) {$\quad\;\;\, -\infty<q<-1.$};
\end{tikzpicture}
}
\qquad
\subfigure{
\begin{tikzpicture}
    \draw (0em,0em) -- (0em,4.5em) node[scale=1.5,sloped,pos=0.5,allow upside down]{\arrowIn};
    \draw (3em,0em) arc (0:180:3em) node[scale=1.5,sloped,pos=0.5,allow upside down]{\arrowIn};
    \node at (0em,-1em) {$0$};
    \node at (-3em,-1em) {$r$};
    \node at (3em,-1em) {$1$};
    \node at (0em,-3em) {(c$^+$) $-\infty<r<0,$};
    \node at (0em,-4em) {$\qquad\;\;\, -1<q<1.$};
\end{tikzpicture}
}

\subfigure{
\begin{tikzpicture}
    \draw (0em,0em) -- (0em,4.5em) node[scale=1.5,sloped,pos=0.5,allow upside down]{\arrowIn};
    \draw (-2em,0em) arc (0:180:3em) node[scale=1.5,sloped,pos=0.5,allow upside down]{\arrowIn};
    \node at (0em,-1em) {$0$};
    \node at (-2em,-1em) {$-1$};
    \node at (-8em,-1em) {$-r$};
    \node at (-4em,-3em) {(a$^-$) $1<r<\infty,$};
    \node at (-4em,-4em) {$\qquad\, 1<q<\infty.$};
\end{tikzpicture}
}
\qquad
\subfigure{
\begin{tikzpicture}
    \draw (0em,0em) -- (0em,4.5em) node[scale=1.5,sloped,pos=0.5,allow upside down]{\arrowIn};
    \draw (-8em,0em) arc (180:0:3em) node[scale=1.5,sloped,pos=0.5,allow upside down]{\arrowIn};
    \node at (0em,-1em) {$0$};
    \node at (-2em,-1em) {$-r$};
    \node at (-8em,-1em) {$-1$};
    \node at (-4em,-3em) {(b$^-$) $0<r<1$,};
    \node at (-4em,-4em) {$\quad\;\;\, -\infty<q<-1.$};
\end{tikzpicture}
}
\qquad
\subfigure{
\begin{tikzpicture}
    \draw (0em,0em) -- (0em,4.5em) node[scale=1.5,sloped,pos=0.5,allow upside down]{\arrowIn};
    \draw (-3em,0em) arc (180:0:3em) node[scale=1.5,sloped,pos=0.5,allow upside down]{\arrowIn};
    \node at (0em,-1em) {$0$};
    \node at (-3em,-1em) {$-1$};
    \node at (3em,-1em) {$-r$};
    \node at (0em,-3em) {(c$^-$) $-\infty<r<0,$};
    \node at (0em,-4em) {$\qquad\;\;\, -1<q<1.$};
\end{tikzpicture}
}
\caption{The six standard configurations}
\label{fig: 6 standard diagrams}\end{figure}

With these expressions, it is clear to see that $H_\pm(q,v)$ are even functions of $v$ for a fixed $|q|>1$.
However, for $|q| < 1$ these are not even functions of $v$.
That the sum \eqref{eq: W(v)} is even (which of course follows immediately from the definition of the pair correlation) can be made manifest as follows:
Any pair of geodesics in $\mathbb{H}$ that do not share an endpoint in $\partial \mathbb{H}$ can be brought by an element of $\SL(2, \mathbb{R})$ into one of the six standard forms illustrated in Figure \ref{fig: 6 standard diagrams}.
The cases with $|q| < 1$ are (c$^+$) and (c$^-$), the sign corresponding the whether the pair contributes to $H_+$ or $H_-$, see \cite{misc:Marklof-Welsh}.
The evenness of the sum \eqref{eq: W(v)} then follows from the fact that the terms corresponding to (c$^+$) are in $q$-preserving bijection with those corresponding to (c$^-$).

Such a bijection is given by $\Gamma_{\bm{c}_{l_1}} \gamma \Gamma_{\bm{c}_{l_2}} \mapsto \Gamma_{\bm{c}_{l_2}} \gamma^{-1} \Gamma_{\bm{c}_{l_1}}$.
Indeed, if there is $g_1 \in \SL(2, \mathbb{R})$ so that the geodesics $g_1\bm{c}_{l_1}$ and $g_2\gamma \bm{c}_{l_2}$ are as in (c$^+$) with $g_1 \bm{c}_{l_1}$ vertical, then $ g_1 \gamma$ brings the geodesics $\bm{c}_{l_2}$ and $\gamma^{-1} \bm{c}_{l_1}$ to the form (c$^+$), however with $g_2 \gamma^{-1} \bm{c}_{l_1}$ vertical.
The geodesics can then be brought to the form (c$^-$), with $\bm{c}_{l_2}$ mapping to vertical, as required, by an application of the transformation $x \mapsto \frac{-rx + r}{x - r}$, which can be seen to preserve the cross-ratio $r$ and hence $q$.

\section{Negative \texorpdfstring{$D$}{D}}
\label{sec:Dnegative}

In this final section, we briefly sketch analogous results in the case that $D$ is negative.
The negative setting is simpler in the sense that we consider $\Gamma$-orbits of points in $\mathbb{H}$ instead of the tops of geodesics, and their statistical distribution has been thoroughly studied.
However, algebraically the $D < 0$ setting is almost identical with the following modification: instead of embedding $\mathbb{Q}(\sqrt{D})$ into a two-dimensional real subspace of $\mathbb{C}^2$ via $(\xi, \overline{\xi})$ as in the real case, we embed $\mathbb{Q}(\sqrt{D})$ inside $\mathbb{R}^2$ by
\begin{equation*}
    a+b\sqrt{D}\mapsto (a,b\sqrt{|D|}).
\end{equation*}
The first embedding is transformed into the second by the matrix $P=\begin{pmatrix} -\frac{\mathrm{i}}{2} & \frac{1}{2} \\ \frac{\mathrm{i}}{2} & \frac{1}{2} \end{pmatrix}$.
Since
\begin{equation}
    \begin{pmatrix}
        \sqrt{D} & -\sqrt{D} \\
        1 & 1
    \end{pmatrix}P
    =
    \begin{pmatrix}
        \sqrt{|D|} & 0 \\
        0 & 1
    \end{pmatrix}
\end{equation}
and
\begin{equation}
    P^{-1}
    \begin{pmatrix}
        \xi & 0\\
        0 & \overline{\xi}
    \end{pmatrix}P
    =
    \begin{pmatrix}
        \Re\xi & -\Im\xi \\
        \Im\xi & \Re\xi
    \end{pmatrix},
\end{equation}
then rewriting \eqref{eq: gamma B_k}, we have
\begin{equation}
    \begin{pmatrix}
        1 & \mu \\
        0 & m
    \end{pmatrix}
    \begin{pmatrix}
        \sqrt{|D|} & 0 \\
        0 & 1
    \end{pmatrix}
    =
    \gamma \widetilde{\mathfrak{B}}_k
    \begin{pmatrix}
        \Re\xi & -\Im\xi \\
        \Im\xi & \Re\xi
    \end{pmatrix},
\label{eq: D<0 I}\end{equation}
where
\begin{equation}
    \widetilde{\mathfrak{B}}_k=\mathfrak{B}_kP
    =
    \begin{pmatrix}
        \Im\beta_{k1} & \Re\beta_{k1} \\
        \Im\beta_{k2} & \Re\beta_{k2} 
    \end{pmatrix}.
\end{equation}
We note that the matrix on the right of \eqref{eq: D<0 I} stabilizes the point $\mathrm{i} \in \mathbb{H}$, as opposed to stabilizing the geodesic $\{\mathrm{i}y: y > 0 \} \subset \mathbb{H}$ in the real setting.  

  The left side of \eqref{eq: D<0 I} yields
\begin{equation}
    \begin{pmatrix}
        1 & \mu \\
        0 & m
    \end{pmatrix}
    \begin{pmatrix}
        \sqrt{|D|} & 0 \\
        0 & 1
    \end{pmatrix}\mathrm{i}
    =
    \frac{\mu}{m}+\mathrm{i}\frac{\sqrt{|D|}}{m},
\end{equation}
and the right side of \eqref{eq: D<0 I} yields
\begin{equation}
    \gamma\begin{pmatrix}
        \Im\beta_{k1} & \Re\beta_{k1} \\
        \Im\beta_{k2} & \Re\beta_{k2} 
    \end{pmatrix}
    \begin{pmatrix}
        \Re\xi & -\Im\xi \\
        \Im\xi & \Re\xi
    \end{pmatrix}\mathrm{i}
    =\gamma\frac{\xi\beta_{k1}}{\xi\beta_{k2}}=\gamma z_{I_k}, \;\text{where}\; z_{I_k}=\frac{\beta_{k1}}{\beta_{k2}}.
\end{equation}
Therefore, $\gamma z_{I_k}=\frac{\mu}{m}+\mathrm{i}\frac{\sqrt{|D|}}{m}$. With a similar construction as in the positive $D$ case, we have the following analogue of Theorem \ref{thm: congruence subgroups I}. 
\begin{thm}
    Let $D<0$ be a square-free integer satisfying $D\equiv 1 \pmod 4$ and $h_1$ be the class number of $\mathbb{Z}[\sqrt{D}]$. Given $n>0$ and $\nu \bmod n$ with $\nu^2\equiv D \pmod n$, there exists a finite set of points $\{z_{I_1},\cdots,z_{I_{h_1}}\}$ in $\mathbb{H}$ with the following properties:
    \begin{enumerate}[(i)]
        \item For $m\in\mathbb{N}$, let $\mu \bmod m$ satisfy $\mu^2\equiv D \pmod m$ such that $m\equiv 0\pmod n$ and $\mu\equiv\nu\pmod n$. If either $m$ or $\frac{D-\mu^2}{m}$ is odd, then there exists a unique $k$ and a double coset $\Gamma_\infty\gamma\Gamma_{1;k}\in\Gamma_\infty\backslash\Gamma_0(n)\slash\Gamma_{1;k}$ such that 
        \begin{equation}
            \gamma z_{I_k}\equiv\frac{\mu}{m}+\mathrm{i}\frac{\sqrt{|D|}}{m} \pmod {\Gamma_\infty}.
        \label{eq:D<0 gamma1}\end{equation}
        \item Conversely, given $k$ and a double coset $\Gamma_\infty\gamma\Gamma_{1;k}\in\Gamma_\infty\backslash\Gamma_0(n)\slash\Gamma_{1;k}$, there exists a unique positive integer $m$ and a residue class $\mu \bmod m$ satisfying \eqref{eq:D<0 gamma1} and either $m$ or $\frac{D-\mu^2}{m}$ being odd.
    \end{enumerate}
  \end{thm}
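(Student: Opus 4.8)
The plan is to observe that the entire algebraic apparatus built in Sections \ref{sec:geodesics} and \ref{sec: Congruence subgroups} is insensitive to the sign of $D$, and then to feed it through the geometric dictionary already recorded in \eqref{eq: D<0 I}. Concretely, Propositions \ref{prp:general m not equiv 2 mod 4} and \ref{prp: B_k}, Corollary \ref{cor: gamma c_{I_k}}, and Lemmas \ref{lm: invariant}, \ref{lm: I_k,nu,n} and \ref{lm: congruence subgroup I} are all assertions about lattices and invertible ideals in the abstract ring $\mathbb{Z}[\sqrt{D}]$ together with the matrix identities they produce, and their proofs never use $D>0$. The only place positivity entered was through the narrow class group and the group of totally positive units, and for $D<0$ both simplify: since $\mathbb{Q}(\sqrt{D})$ has no real embeddings, ``totally positive'' is vacuous, so the narrow class group coincides with the ordinary class group (whence $h_1^+(D)=h_1(D)$, the class number appearing in the statement), and the totally positive units are simply all units of $\mathbb{Z}[\sqrt{D}]$, which for negative $D\equiv 1\pmod 4$ (so $|D|\geq 3$) are just $\{\pm 1\}$.

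First I would record the $\SL(2,\mathbb{Z})$ version, the exact analogue of Proposition \ref{prp: congruence I SL(2,Z)}: combining the already-displayed identity \eqref{eq: D<0 I} with the sign-free Proposition \ref{prp: B_k} shows that an invertible ideal of $\mathbb{Z}[\sqrt{D}]$ (equivalently, by Lemma \ref{lm:general invertible ideal}, a root with $m$ or $\frac{D-\mu^2}{m}$ odd) determines a unique class $1\le k\le h_1$ and a unique double coset $\Gamma_\infty\gamma\Gamma_{1;k}\in\Gamma_\infty\backslash\SL(2,\mathbb{Z})/\Gamma_{1;k}$ with $\gamma z_{I_k}\equiv\frac{\mu}{m}+\mathrm{i}\frac{\sqrt{|D|}}{m}\pmod{\Gamma_\infty}$, and conversely. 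Here the $P$-conjugation turns the diagonal torus stabilizing the vertical geodesic through $\mathrm{i}$ into the rotation group fixing the point $\mathrm{i}$, so $z_{I_k}=\widetilde{\mathfrak{B}}_k\,\mathrm{i}=\beta_{k1}/\beta_{k2}$ and $\Gamma_{1;k}$ becomes the finite (elliptic) stabilizer of $z_{I_k}$ in $\SL(2,\mathbb{Z})$. Next I would descend to $\Gamma_0(n)$ exactly as in the proof of Theorem \ref{thm: congruence subgroups I}: Lemma \ref{lm: I_k,nu,n} supplies, in each class, a representative ideal whose associated point $z_{I_k}$ carries the prescribed congruence data $m\equiv 0\pmod n$, $\mu\equiv\nu\pmod n$; Lemma \ref{lm: invariant}(i) shows these congruence conditions are preserved along the whole $\Gamma_0(n)$-orbit; and the descent Lemma \ref{lm: congruence subgroup I} upgrades $\SL(2,\mathbb{Z})$-equivalence to $\Gamma_0(n)$-equivalence once the congruences hold. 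Because we restrict to the invertible case throughout, the maximal order $\mathbb{Z}[\tfrac{1+\sqrt{D}}{2}]$ and Theorem \ref{thm: congruence subgroups J} never enter, and this statement is purely the negative-$D$ shadow of Theorem \ref{thm: congruence subgroups I}.

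The converse direction (ii) runs the same correspondence backwards, using the converse halves of Proposition \ref{prp: B_k} and Lemma \ref{lm: congruence subgroup I}. I expect the main obstacle to be bookkeeping rather than substance: unlike the real case, $\Gamma_{1;k}$ is now a finite elliptic group (generically $\{\pm I\}$) instead of an infinite cyclic hyperbolic group, so there is no ``length of a closed geodesic'' and no analogue of the unit-exponent computations of Lemmas \ref{lm: gamma_0 I} and \ref{lm:Gamma0n} to carry out. One must therefore check that the root-to-double-coset dictionary remains a bijection with this finite stabilizer, but this is immediate, since the argument only used $\Gamma_\infty\gamma$ to encode $\mu\bmod m$ and $\gamma\Gamma_{1;k}$ to encode the choice of $\xi$ up to units, neither of which relied on hyperbolicity. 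The one genuinely new point to confirm is that nothing degenerates when $z_{I_k}$ happens to be an $\SL(2,\mathbb{Z})$-elliptic point (equivalent to $\mathrm{i}$ or $\rho$); this is handled uniformly by taking $\Gamma_{1;k}$ to be the full point-stabilizer, so no special treatment is needed.
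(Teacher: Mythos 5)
Your proposal is correct and follows essentially the same route as the paper: Section \ref{sec:Dnegative} likewise reuses the sign-insensitive algebraic machinery of Sections \ref{sec:geodesics} and \ref{sec: Congruence subgroups}, passes through the conjugated identity \eqref{eq: D<0 I} to replace tops of geodesics by the points $z_{I_k}=\beta_{k1}/\beta_{k2}$, and then invokes ``a similar construction as in the positive $D$ case.'' In fact your write-up is more detailed than the paper's sketch, and your observations (narrow class group collapsing to the class group, totally positive units becoming $\{\pm 1\}$, finite point-stabilizers making the analogues of Lemmas \ref{lm: gamma_0 I} and \ref{lm:Gamma0n} unnecessary, and the disappearance of the orientation condition) are exactly the simplifications the paper leaves implicit.
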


In the same way, we obtain the analogue of Theorem \ref{thm: congruence subgroups J}.

\begin{thm}
    Let $D<0$ be a square-free integer satisfying $D\equiv 1 \pmod 4$ and $h_2$ be the class number of $\mathbb{Z}[\frac{1+\sqrt{D}}{2}]$. Given $n>0$ and $\nu \bmod n$ with $\nu^2\equiv D \pmod n$, there exists a finite set of points $\{z_{J_1},\cdots,z_{J_{h_2}}\}$ in $\mathbb{H}$ with the following properties:
    \begin{enumerate}[(i)]
        \item For even $m\in\mathbb{N}$, let $\mu \bmod m$ satisfy $\mu^2\equiv D \pmod m$ and $\frac{D-\mu^2}{m}$ even such that $m\equiv 0\pmod n$ and $\mu\equiv\nu\pmod n$. If $n$ is odd (resp. $n$ is even), then there exists a unique $l$ and a double coset $\Gamma_\infty\gamma\Gamma_{2;l}\in\Gamma_\infty\backslash\Gamma_0(n)\slash\Gamma_{2;l}$ (resp. $\Gamma_\infty\backslash\Gamma_0(\frac{n}{2})\slash\Gamma_{2;l}$) such that 
        \begin{equation}
            \gamma z_{J_l}\equiv\frac{\mu}{m}+\mathrm{i}\frac{\sqrt{D}}{m} \pmod {\Gamma_\infty}.
        \label{eq:D<0 gamma2}\end{equation}
        \item Conversely, given $l$ and a double coset $\Gamma_\infty\gamma\Gamma_{J;l}\in\Gamma_\infty\backslash\Gamma_0(n)\slash\Gamma_{2;l}$ (resp. ${\Gamma_\infty\backslash\Gamma_0(\frac{n}{2})\slash\Gamma_{2;l}}$), there exists a unique even positive integer $m$ and a residue class $\mu \bmod m$ satisfying \eqref{eq:D<0 gamma2} and $\frac{D-\mu^2}{m}$ being even.
    \end{enumerate}
\end{thm}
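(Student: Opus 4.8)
The plan is to mirror the proof of Theorem \ref{thm: congruence subgroups J}, replacing the geometry of tops of geodesics by the geometry of orbit points, exactly as was just done for the first $D<0$ theorem. The key point is that all of the arithmetic input—Proposition \ref{prp: D_l}, Lemma \ref{lm: I_k,nu,n}, Lemma \ref{lm: invariant}, and Lemma \ref{lm: congruence subgroup J}—is stated and proved for an arbitrary $D\equiv 1\pmod 4$, and so applies verbatim when $D<0$. Only the passage from matrices to points of $\mathbb{H}$ has to be redone, and this is accomplished through the conjugating matrix $P$.

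First I would set up the geometric dictionary. Conjugating \eqref{eq: gamma D_l} on the right by $P$ and using the identities
\begin{equation*}
  \begin{pmatrix} \tfrac{1+\sqrt{D}}{2} & \tfrac{1-\sqrt{D}}{2} \\ 1 & 1 \end{pmatrix} P = \begin{pmatrix} \tfrac{\sqrt{|D|}}{2} & \tfrac{1}{2} \\ 0 & 1 \end{pmatrix}, \qquad P^{-1} \begin{pmatrix} \zeta & 0 \\ 0 & \overline{\zeta} \end{pmatrix} P = \begin{pmatrix} \Re\zeta & -\Im\zeta \\ \Im\zeta & \Re\zeta \end{pmatrix},
\end{equation*}
together with the fact that the rotation--scaling matrix on the right fixes $\mathrm{i}\in\mathbb{H}$, I would apply both sides to $\mathrm{i}$ to obtain $\gamma z_{J_l}=\frac{\mu}{m}+\mathrm{i}\frac{\sqrt{|D|}}{m}$, where $z_{J_l}=\widetilde{\mathfrak{D}}_l\,\mathrm{i}=\frac{\delta_{l1}}{\delta_{l2}}$ and $\widetilde{\mathfrak{D}}_l=\mathfrak{D}_l P$. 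This is the point-analogue of Proposition \ref{prp: congruence J SL(2,Z)}. Ordering the basis $\{\delta_{l1},\delta_{l2}\}$ so that $\det\widetilde{\mathfrak{D}}_l>0$ guarantees $\Im z_{J_l}>0$, and taking the representatives $J_{l,\nu,n}$ furnished by Lemma \ref{lm: I_k,nu,n} pins down the finite set of base points $\{z_{J_1},\dots,z_{J_{h_2}}\}$ whose associated roots satisfy $m\equiv 0\pmod n$ and $\mu\equiv\nu\pmod n$.

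Next I would descend to the congruence subgroups. Lemma \ref{lm: invariant} shows the congruence conditions are preserved along the $\Gamma_0(n)$-orbit (if $n$ is odd) or the $\Gamma_0(\tfrac{n}{2})$-orbit (if $n$ is even); Lemma \ref{lm: congruence subgroup J}, whose hypotheses already permit $D<0$, shows conversely that two such roots lying in a common $\SL(2,\mathbb{Z})$-orbit lie in a common orbit under the relevant congruence subgroup. Specializing that lemma to $m_1=m_2$ and $\mu_1=\mu_2$ gives the point-analogue of Lemma \ref{lm: gamma_0 I}, namely that the full stabilizer $\Gamma_{2;l}$ of $z_{J_l}$ already lies inside $\Gamma_0(n)$ (resp. $\Gamma_0(\tfrac{n}{2})$). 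Assembling the forward and backward directions then proceeds exactly as in Theorem \ref{thm: congruence subgroups J}.

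The genuinely new content is confined to the short matrix computation of the second paragraph; the remainder is bookkeeping, since the arithmetic lemmas are insensitive to the sign of $D$. The structural differences I expect to require the most care are the features special to an imaginary quadratic order: every nonzero element has positive norm, so ``totally positive'' is automatic and the narrow class group collapses to the ordinary one (justifying the use of the class number $h_2$); the stabilizer $\Gamma_{2;l}$ is now elliptic and finite rather than infinite cyclic; and, because we track a point rather than an oriented geodesic, the ``positively oriented'' hypothesis appearing in the converse of Theorem \ref{thm: congruence subgroups J} simply disappears—every admissible double coset produces a genuine point of $\mathbb{H}$.
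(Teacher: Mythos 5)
Your proposal is correct and follows essentially the same route as the paper: the paper's entire proof of this theorem is the remark ``In the same way, we obtain the analogue of Theorem \ref{thm: congruence subgroups J}'', referring to the conjugation-by-$P$ dictionary it set up for the $\mathbb{Z}[\sqrt{D}]$ case, and your explicit identity $\begin{pmatrix} \tfrac{1+\sqrt{D}}{2} & \tfrac{1-\sqrt{D}}{2} \\ 1 & 1 \end{pmatrix} P = \begin{pmatrix} \tfrac{\sqrt{|D|}}{2} & \tfrac{1}{2} \\ 0 & 1 \end{pmatrix}$, the base points $z_{J_l}=\delta_{l1}/\delta_{l2}$, and the reuse of Lemmas \ref{lm: I_k,nu,n}, \ref{lm: invariant} and \ref{lm: congruence subgroup J} (with total positivity vacuous and the specialization $m_1=m_2$, $\mu_1=\mu_2$ placing the now-finite stabilizer inside $\Gamma_0(n)$ or $\Gamma_0(\tfrac{n}{2})$) is exactly the intended expansion of that remark.
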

Combining these two theorems, we obtain the analogue of Theorem \ref{thm: congruence subgroups}.
\begin{thm}
    Let $D<0$ be a square-free integer satisfying $D\equiv 1 \pmod 4$. Given $n>0$ and $\nu \bmod n$ with $\nu^2\equiv D \pmod n$, there exists a finite set of points $\{z_1,\cdots,z_{h}\}$ in $\mathbb{H}$ with the following properties:
    \begin{enumerate}[(i)]
        \item For $m\in\mathbb{N}$, let $\mu \bmod m$ satisfy $\mu^2\equiv D \pmod m$ such that $m\equiv 0\pmod n$ and $\mu\equiv\nu\pmod n$. Then there exists a unique $k$ and a double coset $\Gamma_\infty\gamma\Gamma_k\in\Gamma_\infty\backslash\Gamma_0(n)\slash\Gamma_k$ such that 
        \begin{equation}
            \gamma z_k\equiv\frac{\mu}{m}+\mathrm{i}\frac{\sqrt{D}}{m} \pmod {\Gamma_\infty}.
        \label{eq:D<0 gamma1&2}\end{equation}
        \item Conversely, given $k$ and a double coset $\Gamma_\infty\gamma\Gamma_k\in\Gamma_\infty\backslash\Gamma_0(n)\slash\Gamma_k$, there exists a unique positive integer $m$ and a residue class $\mu \bmod m$ satisfying \eqref{eq:D<0 gamma1&2}.
    \end{enumerate}
\end{thm}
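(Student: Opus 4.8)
The plan is to follow the proof of Theorem \ref{thm: congruence subgroups} essentially verbatim, with the tops of geodesics replaced by the points $z_{I_k}, z_{J_l} \in \mathbb{H}$ and with the two preceding statements (the negative-$D$ analogues of Theorems \ref{thm: congruence subgroups I} and \ref{thm: congruence subgroups J}) playing the role of those two theorems. As in the positive case, the roots $\mu \bmod m$ with $m \equiv 0 \pmod n$ and $\mu \equiv \nu \pmod n$ split into two complementary families according to the parity dichotomy of Lemma \ref{lm:general invertible ideal}: those with either $m$ or $\frac{D-\mu^2}{m}$ odd, which correspond to invertible ideals of $\mathbb{Z}[\sqrt{D}]$ and are parametrized by the $\Gamma_0(n)$-orbits of $z_{I_1}, \dots, z_{I_{h_1}}$; and those with both $m$ and $\frac{D-\mu^2}{m}$ even, which correspond to ideals of $\mathbb{Z}[\frac{1+\sqrt{D}}{2}]$ and are parametrized by the $\Gamma_0(n)$-orbits (when $n$ is odd) or $\Gamma_0(\frac{n}{2})$-orbits (when $n$ is even) of $z_{J_1}, \dots, z_{J_{h_2}}$. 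A pleasant simplification over the real setting is that the positive-orientation clauses disappear entirely, since we are now tracking points rather than oriented geodesics.

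When $n$ is odd both families are already governed by $\Gamma_0(n)$, so I would simply set $\{z_1, \dots, z_h\} = \{z_{I_1}, \dots, z_{I_{h_1}}, z_{J_1}, \dots, z_{J_{h_2}}\}$ with $h = h_1 + h_2$, take $\Gamma_k = \Gamma_{1;k}$ for $1 \le k \le h_1$ and $\Gamma_k = \Gamma_{2;k-h_1}$ otherwise, and the two assertions follow by concatenating the two theorems. The real work is confined to $n$ even, where the $z_{J_l}$-family is indexed by $\Gamma_0(\frac{n}{2})$-orbits and must be broken into $\Gamma_0(n)$-orbits so that a single acting group remains, exactly as in the discussion preceding the proof of Theorem \ref{thm: congruence subgroups}. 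Fixing coset representatives of $\Gamma_0(n) \backslash \Gamma_0(\frac{n}{2})$ (two when $n \equiv 0 \pmod 4$ and three when $n \equiv 2 \pmod 4$, by the recalled index computation), I would replace each $z_{J_l}$ by its translates under these representatives, retaining one point from each resulting $\Gamma_0(n)$-orbit, and define $h = h_1 + s h_2$ together with the stabilizers $\Gamma_k$ in the same piecewise manner as in the proof of Theorem \ref{thm: congruence subgroups}.

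The main obstacle will be fixing the correct multiplicity $s$, i.e. the number of $\Gamma_0(n)$-orbits into which the $\Gamma_0(\frac{n}{2})$-orbit of each $z_{J_l}$ decomposes. In the positive case this was read off from the length of the lifted closed geodesic, hence from the comparison $\varepsilon_1 = \varepsilon_2$ versus $\varepsilon_1 = \varepsilon_2^3$ via Lemma \ref{lm:Gamma0n} and Proposition \ref{prp: epsilon1 epsilon2}; for $D < 0$ that argument is unavailable, since the stabilizers $\Gamma_{2;l}$ are now finite cyclic (images of the roots of unity of the relevant order) rather than infinite cyclic. Instead I would compute $s$ directly as the cardinality of $\Gamma_0(n) \backslash \Gamma_0(\frac{n}{2}) / \Gamma_{2;l}$, which is bounded by the index and equals $[\Gamma_0(\frac{n}{2}) : \Gamma_0(n)]$ precisely when $\Gamma_{2;l} \subseteq \Gamma_0(n)$. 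I expect this to reduce to a divisibility condition strictly parallel to Lemma \ref{lm:Gamma0n}, absorption of a coset (the analogue of the inert cases (d) in the figures) being possible only when the finite stabilizer $\Gamma_{2;l}$ is nontrivial in $\PSL(2,\mathbb{Z})$; among square-free $D < 0$ with $D \equiv 1 \pmod 4$ this occurs solely for $D = -3$, which is the unique such field with extra units and which satisfies $D \equiv 5 \pmod 8$, matching the positive pattern. Once $s$ is pinned down, exhaustiveness of the parametrization \eqref{eq:D<0 gamma1&2} is immediate since the two theorems already cover all roots in the two families, and injectivity follows because the translates were chosen one per $\Gamma_0(n)$-orbit, so distinct double cosets in the combined list yield distinct points modulo $\Gamma_\infty$.
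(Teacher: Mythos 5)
Your proposal is correct and takes the same route as the paper, whose entire proof of this statement is the single remark that it follows by combining the two preceding theorems exactly as in the proof of Theorem \ref{thm: congruence subgroups}. Your resolution of the one point where the positive-$D$ argument does not transfer verbatim --- computing the multiplicity $s$ via the double cosets $\Gamma_0(n)\backslash\Gamma_0(\tfrac{n}{2})/\Gamma_{2;l}$ and observing that collapse of cosets can occur only when $\Gamma_{2;l}$ is nontrivial in $\PSL(2,\mathbb{Z})$, hence only for $D=-3$ (whose maximal order $\mathbb{Z}[\tfrac{1+\sqrt{-3}}{2}]$ has unit group of order $6$, with $-3\equiv 5\pmod 8$ matching the inert case $\varepsilon_1=\varepsilon_2^3$ of the real setting) --- is correct and supplies precisely the detail that the paper's sketch omits.
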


Now, to obtain the analogues of Theorems \ref{thm: pair correlation density} and \ref{thm: random point process}, we can apply the results of \cite{jr:Marklof-Vinogradov} instead of those in \cite{misc:Marklof-Welsh}.
The paper \cite{jr:Marklof-Vinogradov} studies the distribution of real parts of an orbit of a point in $\mathbb{H}$ (and higher-dimensional hyperbolic space).
This is referred to as the "cuspidal" observer case of the distribution of angles in hyperbolic lattices, which has been studied in \cite{jr:Boca-Pasol-Popa,jr:Boca-Popa,jr:Lutsko,jr:Risager-Rudnick,jr:Risager-Sodergren} and others. 
The pair correlation in this setting is computed in \cite{jr:Kelmer-Kontorovich} and \cite{jr:Marklof-Vinogradov}, giving a form similar to the one obtained here and in \cite{misc:Marklof-Welsh}.


\bibliographystyle{plain}
\bibliography{refs}

\end{document}